\newbox\mybox 
\newdimen\myboxwidth    
\newcommand{\va}{\mathbf{a}}\newcommand{\vb}{\mathbf{b}}
\newcommand{\ve}{\mathbf{e}}
\newcommand{\vv}{\mathbf{v}}\newcommand{\vw}{\mathbf{w}}\newcommand{\vx}{\mathbf{x}}
\newcommand{\vy}{\mathbf{y}}\newcommand{\vz}{\mathbf{z}}
\newcommand{\dZ}{\mathds{Z}}
\newcommand{\norm}[1]{\left\Vert#1\right\Vert}
\newenvironment{enumerateA}{\begin{enumerate}[\upshape (A)]\setlength{\itemsep}{4pt}}{\end{enumerate}}
\DeclareMathOperator{\pr}{\mathds{P}}
\newcommand{\eg}{\emph{e.g.,}\ }
\newcommand{\Z}{\mathds{Z}}
\newcommand{\Zd}{\mathds{Z}^d}
\newcommand{\Rd}{\mathds{R}^d}
\newcommand{\bbR}{\mathds{R}}
\newcommand{\tz}{\tilde{\vz}}
\newcommand{\wtD}{\widetilde{D}}
\newcommand{\E}{\mathds{E}}
\newcommand{\prob}{\mathds{P}}
\newcommand{\cC}{\mathcal{C}}
\newcommand{\cD}{\mathcal{D}}
\newcommand{\cT}{\mathcal{T}}
\newcommand{\cE}{\mathcal{E}}
\newcommand{\indi}{\mathbf{1}}
\newcommand{\send}{\mathsf{send}}
\newcommand{\get}{\mathsf{get}}
\newcommand{\mass}{\mathfrak{m}}
\newcommand{\Anns}{Ann'}
\newtheorem{thm}{Theorem}
\newtheorem{lem}[thm]{Lemma}
\newtheorem{prop}[thm]{Proposition}
\newtheorem{defin}{Definition}
\newtheorem{clam}[thm]{Claim}
\newcommand{\chJH}[1]{{\color{black}#1}}
\def\beq{ \begin{equation} }
 \def\eeq{ \end{equation} }
 \def\beqx{ \begin{equation*} }
 \def\eeqx{ \end{equation*} }
 \def\beqa{\begin{eqnarray}}
 \def\eeqa{\end{eqnarray}}
 \def\beqax{\begin{eqnarray*}}
 \def\eeqax{\end{eqnarray*}}
\newcommand{\sa}[1]{\ensuremath{\,{\buildrel #1 \over \longleftrightarrow}\,}}
\newcommand{\sS}{\mathscr{S}}
\newcommand{\dN}{\mathds{N}}
\let\del=\partial
\let\gc=\gamma
\let\lra=\leftrightarrow
\let\cons=c
\let\Cons=C
\newcommand{\rrb}{\rrbracket}
\newcommand{\llb}{\llbracket}
\newcommand{\br}[1]{\llb#1\rrb}
\begin{document}
\begin{frontmatter}
	\title{Restricted percolation critical exponents in high dimensions}
	\runtitle{Restricted High Dimensional Percolation Exponents}
	
	\begin{aug}
		\author{\fnms{Shirshendu} \snm{Chatterjee}\thanksref{m1,m2,t1}\ead[label=e1]{shirshendu@ccny.cuny.edu}  \ead[label=u1,url]{http://shirshendu.ccny.cuny.edu/}}
		\and
		\author{\fnms{Jack} \snm{Hanson}\thanksref{m2,t2}\ead[label=e2]{jhanson@ccny.cuny.edu}
			\ead[label=u2,url]{http://jhanson.ccny.cuny.edu/}}
		
		\thankstext{t1}{This work was supported by a grant from the Simons Foundation (\#430073, Shirshendu Chatterjee).}
		\thankstext{t2}{Funded in part by NSF Grant DMS-1612921.}

		\runauthor{Chatterjee and Hanson}
		
		\affiliation{City University of New York; City College  \thanksmark{m2}  and 
			 Graduate Center\thanksmark{m1}}
		
		\address{Shirshendu Chatterjee\\Department of Mathematics \\City University of New York, City College \\ 160 Convent Ave, NAC 4/114B \\  New York, NY 10031 , USA\\
			\printead{e1}\\
			\printead{u1}}
		\address{Jack Hanson \\ Department of Mathematics \\City University of New York, City College \\ 160 Convent Ave,  NAC 6/292 \\ New York, NY 10031 , USA\\
			\printead{e2}\\
			\printead{u2}}
	\end{aug}
	
	\date{\today}
	\begin{abstract}
		Despite great progress in the study of critical percolation on $\Zd$ for $d$ large, properties of critical clusters in high-dimensional fractional spaces and boxes remain poorly understood, unlike the situation in two dimensions. Closely related models such as critical branching random walk give natural conjectures for the value of the relevant high-dimensional critical exponents; see in particular the conjecture by Kozma-Nachmias that the probability that $0$ and $(n, n, n, \ldots)$ are connected within $[-n,n]^d$ scales as $n^{-2-2d}$. 

In this paper, we study the properties of critical clusters in high-dimensional half-spaces and boxes. In half-spaces, we show that the probability of an open connection (``arm'') from $0$ to the boundary of a sidelength $n$ box scales as $n^{-3}$. We also find the scaling of the half-space two-point function (the probability of an open connection between two vertices) and the tail of the cluster size distribution. In boxes, we obtain the scaling of the two-point function between vertices which are any macroscopic distance away from the boundary.
	\end{abstract}
	
	\begin{keyword}[class=AMS]
		\kwd[Primary ]{60K35}\kwd[; secondary ]{82B43}
	\end{keyword}
	
	\begin{keyword}
		\kwd{percolation} \kwd{critical percolation} \kwd{percolation in high dimension}  \kwd{critical exponents}  \kwd{one arm exponent} \kwd{two point function}
	\end{keyword}
\end{frontmatter}

\section{Introduction}\label{sec:int}
In this paper, we consider the {\it bond percolation} model on the canonical $d$-dimensional {\it half space}, which is the subgraph of the $d$-dimensional lattice induced by the vertices in $\{\vx\in\dZ^d: x(1)\ge 0\}$. It is well known \cite{BGN91} that there is no infinite {\it open cluster} almost surely in critical percolation on any $d$-dimensional half spaces for any $d>1$, although the analogous problem for $d$-dimensional lattices is settled only for $d=2$ (due to Harris \cite{H60} and Kesten \cite{K80}) and in {\it high dimensions} (due to Hara \& Slade \cite{HS90} and Fitzner \& van der Hofstad \cite{FH17}), but is still open  for the intermediate dimensions. By {\it high dimensions} we refer to one of the two underlying graphs: (i) the {\it square lattice} $\dZ^d$ (i.e.~the graph with vertex set $\dZ^d$ such that $\vx, \vy\in\dZ^d$ are neighbors iff $\norm{\vx-\vy}_1=1$) with $d\ge 11$ or, (ii) the {\it spread out lattice} (i.e.~the graph with vertex set $\dZ^d$ such that $\vx, \vy\in\dZ^d$ are neighbors iff $\norm{\vx-\vy}_\infty\le L$ for sufficiently large $L$) with $d > 6$ (see further definitions below).	

The results of \cite{BGN91} mentioned above lead to questions about the main features of critical percolation clusters within half spaces, including the behavior of \\
(a)  the {\it one arm probability}, which is the probability that the origin is connected
to the boundary of the ball having $\ell^\infty$ radius $n$ lying within the half-space;\\
(b)  the {\it two point function} $\tau_H(\vx,\vy)$, which is the probability that two vertices $\vx$ and $\vy$ are  connected by an open path lying within the half-space;\\
(c) the {\it upper tail of the cluster size}, which is the probability that the cardinality of the half-space open cluster containing the origin is larger than $n$.\\
Clearly, the probabilities in (a) and (c) (resp.~(b)) tend to 0 as $n$ (resp.~$\norm{\vx-\vy}_\infty$) tends to $\infty$.

It is widely believed among the physicists (see e.g.~\cite[Section 2.2]{HH17})  that the analogous probabilities for critical percolation on lattices --- and fractional spaces including half-spaces ---  decay polynomially, i.e.~the probabilities in (a), (b) and (c) are $n^{-1/\rho+o(1)},\, \norm{\vx-\vy}_\infty^{2-d+\eta+o(1)}$ and $n^{-1/\zeta+o(1)}$  respectively for some critical exponents $\rho, \eta, \zeta$, which  depend only  on the dimension of the underlying lattice rather than the structural details of it.

In this paper, we have obtained the above mentioned critical exponents for  high dimensional half spaces.

\begin{thm} \label{Critical Exponents}
	Consider critical percolation on high dimensional half spaces. Then
	\begin{align*}
	(a) & \text{ one arm probability }  \asymp n^{-3}, \\
	(b) & \text{ two point function  } \tau_H(\vx,\vy)  \asymp 
	\begin{cases} \|\vx - \vy\|_{\infty}^{2-d} & \text{ if both $x(1)$ and $y(1)$ are $O(\norm{\vx-\vy}_\infty)$} \\
	\|\vx - \vy\|_{\infty}^{1-d} & \text{ if $x(1)=0$ and $y(1)$ is $O(\norm{\vx-\vy}_\infty)$} \\
	\|\vx - \vy\|_{\infty}^{-d} & \text{ if both $x(1)$ and $y(1)$ are 0}, 
	\end{cases} \text{ and }
	\\
	(c) & \text{ upper tail of cluster size }  \asymp n^{-3/4}.
	\end{align*}
\end{thm}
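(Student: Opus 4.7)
I would prove the three parts in the order (b), (a), (c), since each subsequent estimate relies on the previous ones. The main tools are the Hara--Slade full-space two-point function asymptotic $\tau(\vx,\vy) \asymp \|\vx-\vy\|_\infty^{2-d}$ (and its triangle-condition corollaries), a reflection/image argument to account for the half-space boundary, and a Kozma--Nachmias style second-moment argument for the one-arm exponent.

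For (b), the upper bound $\tau_H(\vx,\vy) \leq \tau(\vx,\vy)$ is immediate and already yields case~1 (a matching lower bound comes from a finite-energy ``corridor'' construction inside $H$). To handle the improved exponents in cases~2 and~3, I would decompose any open $\vx$--$\vy$ path in $\dZ^d$ by its first crossing into $\{u(1)<0\}$, producing a BK-type inequality
\[
0 \;\leq\; \tau(\vx,\vy) - \tau_H(\vx,\vy) \;\leq\; \sum_{\vw:\,w(1)=0} \tau_H(\vx,\vw)\,\tau(\vw',\vy),
\]
where $\vw'$ is a neighbor of $\vw$ with $w'(1)=-1$. Combined with the Hara--Slade gradient estimate $\tau(\vx,\vy) - \tau(\vx,\vy^*) \asymp (y(1)+1)\,\|\vx-\vy\|_\infty^{1-d}$, with $\vy^*$ the reflection of $\vy$ across $\{x(1)=-1\}$, this should close to give the boundary-suppressed asymptotics $\|\vx-\vy\|_\infty^{1-d}$ and $\|\vx-\vy\|_\infty^{-d}$ in cases~2 and~3 respectively.

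For (a), I would carry out a Paley--Zygmund second-moment argument on $X := |\{\vy \in \partial B_n \cap H : 0 \leftrightarrow \vy \text{ in } H\}|$. Using (b), one computes $\E[X] \asymp 1$, dominated by vertices with $y(1) \asymp n$; and the triangle/tree-graph inequality gives
\[
\E[X^2] \;\lesssim\; \sum_{\vu} \tau_H(0,\vu)\bigg(\sum_{\vy \in \partial B_n \cap H}\tau_H(\vu,\vy)\bigg)^{\!2} \;\asymp\; n^3
\]
after splitting $\vu$ by its distance to the boundary. Paley--Zygmund then yields the lower bound $\prob(0 \leftrightarrow \partial B_n) \geq c\, n^{-3}$. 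The matching upper bound requires adapting the Kozma--Nachmias inductive argument: one propagates a one-arm estimate through nested annuli using half-space triangle-diagram bounds, handling annuli that intersect $\{x(1)=0\}$ by a separate boundary-adapted estimate.

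Part (c) follows by a standard scaling argument: the asymptotics in (b) imply that, conditionally on $\{0 \leftrightarrow \partial B_n\}$, a typical cluster contains $\asymp n^4$ vertices (the same intrinsic-dimension-$4$ scaling as in the bulk), so $\prob(|C(0)|>m) \asymp \prob(\mathrm{diam}(C(0)) > c\, m^{1/4}) \asymp m^{-3/4}$. Concretely, the lower bound uses the event $\{0 \leftrightarrow \partial B_{m^{1/4}}\}$ together with a conditional second-moment estimate on $|C(0)|$, while the upper bound splits into $\mathrm{diam}(C(0)) > C m^{1/4}$ (controlled by the one-arm upper bound) and $\mathrm{diam}(C(0)) \leq C m^{1/4}$ with $|C(0)|>m$ (controlled by a higher-moment estimate built from (b)). The hardest step will be the upper bound in (a): the loss of translation invariance near $\{x(1)=0\}$ forces a delicate boundary-sensitive induction on spatial scales, and obtaining a uniform control on the half-space triangle diagram near its boundary will constitute the bulk of the technical work.
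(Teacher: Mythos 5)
Your plan inverts the paper's logical structure and, as written, contains two steps that I don't believe can be executed. The paper proves (a) first, then uses it to obtain (b) and (c); it explicitly flags this as the \emph{opposite} of the $\dZ^d$ story, where the two-point and cluster-size exponents came first. Your order (b) $\to$ (a) $\to$ (c) would be a genuinely different route, but the key step you rely on for (b) does not rest on established facts.

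First, the ``Hara--Slade gradient estimate'' $\tau(\vx,\vy) - \tau(\vx,\vy^*) \asymp (y(1)+1)\,\|\vx-\vy\|_\infty^{1-d}$ is not a known result. Hara's sharp asymptotics give $\tau(\vx) = A\|\vx\|^{2-d}(1 + O(\|\vx\|^{-\gamma}))$ for some small $\gamma>0$ that is not quantitatively $1$; the error term swamps the claimed difference of order $\|\vx-\vy\|^{1-d}$ when $y(1) = O(1)$. Pointwise gradient control of the two-point function at this precision is exactly the kind of refined information that is unavailable in high dimensions, and obtaining it would be a substantial contribution in its own right.

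Second, even granting the BK decomposition
$\tau(\vx,\vy)-\tau_H(\vx,\vy) \leq \sum_{\vw:w(1)=0}\tau_H(\vx,\vw)\tau(\vw',\vy)$,
the naive bound $\tau_H \leq \tau$ makes the right-hand side $\asymp \|\vx-\vy\|^{2-d}$ --- the \emph{same} order as $\tau(\vx,\vy)$. Extracting a quantity that is $\asymp\|\vx-\vy\|^{1-d}$ from the difference of two terms of order $\|\vx-\vy\|^{2-d}$ requires exact constant control, which you don't have (and which the paper does not attempt). Moreover, the right-hand side already contains $\tau_H(\vx,\vw)$ with both arguments on the boundary, which is precisely the quantity you are trying to bound, so the argument is circular in case~3.

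Third, the ``finite-energy corridor'' lower bound for case~1 is a gap. At criticality a corridor costs exponentially in its length; what you need is that $\prob(\vx \sa{B(Mn)}\vy) \geq c\|\vx-\vy\|^{2-d}$ uniformly, and this is the content of the paper's Theorem~\ref{thm:boxcon}. Its proof there requires a nontrivial inductive bootstrapping on box sizes, seeded with the bound on $\prob(\vx\lra\vy \text{ through } B(Mn)^c)$ from Heydenreich--van der Hofstad--Hulshof (their equation (1.12)). That input, together with a regularity/extensibility machinery and a local-modification argument, is not something a generic positivity argument can substitute for.

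Finally, you acknowledge that the upper bound for (a) is the hardest step but offer only ``adapting the Kozma--Nachmias inductive argument.'' The paper uses a mass-transport inequality (Lemma~\ref{lem:get}): one defines a translation-covariant rule sending mass from $0$ to vertices in a suitable annular region, shows $\E\,\get \leq Cn$ by conditioning on restricted clusters and applying the extensibility theorem, and deduces a tail bound on $\send$ which then feeds into an iterative bound $\pi_H((1+\lambda)n) \leq \cdots$. This mechanism has no counterpart in \cite{KN11} and is arguably the most original idea in the proof; without something like it, the boundary-sensitive induction you gesture at does not close, because the half-space cluster-size exponent $\zeta$ (which KN use as input) is itself not yet available at that stage.
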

Here and later, we write $f(n) \asymp g(n)$ to mean that there is a constant $C > 0$ (possibly depending on the dimension $d$ and the choice of the lattice) such that $C^{-1}f(n) \le g(n) \le Cf(n)$ for all $n \geq 1$.  

In the past, the critical exponents $\rho,\eta, \zeta$ (as described above)  were obtained for some other graphs. It is known that
\begin{enumerateA}
	\item $\rho=1$ for critical percolation  on regular infinite trees (due to Kolmogorov \cite{K38}), $\rho=1$
	for critical oriented percolation on spread-out lattices having dimension larger than $4$ (due to van der Hofstad, den Hollander \& Slade \cite{HHS07, HHS07a}),
	$\rho=1/2$ for high dimensional lattices (due to Sakai \cite{S04}, Kozma \& Nachmias \cite{KN11}),
	$\rho=48/5$ for  critical site percolation on the two-dimensional triangular lattice 
	(due to Lawler, Schramm \& Werner \cite{LSW02}).
	\item $\eta=0$  for  critical percolation on high dimensional lattices (due to Hara \cite{H08} and Hara \& Slade \cite{HS90}) and $\eta = -5/24$ on the two-dimensional triangular lattice (due to Lawler, Schramm \& Werner \cite{LSW02} and to Kesten \cite[Equation (4)]{K87}).
	\item $\zeta$ equals 2  for critical percolation on high dimensional lattices (due to  Aizenman \& Barsky \cite{AB87}, Barsky \& Aizenman \cite{BA91} and Hara \& Slade \cite{HS90}), and $\zeta = 91/5$ on the two-dimensional triangular lattice (due to Lawler, Schramm, \& Werner \cite{LSW02} and to Kesten \cite[Equation (9)]{K87}).
\end{enumerateA}
It would be interesting to analyze critical percolation clusters on other high-dimensional fractional lattices.
In the following section we describe the background and motivation for our paper in more detail.

\subsection{Background and motivation}
Over the last few decades, there has been a great deal of research into the existence and properties of phase transitions in different statistical-mechanical models. The simplest among such models is perhaps the Bernoulli bond percolation model, where one obtains a random graph from an underlying infinite base graph $G$ by independently retaining each of its edges with probability $p\in[0,1]$ and deleting it with probability  $1-p$.  For an introduction to the subject and for earlier works, when the base graph  is $\dZ^d$ with nearest-neighbor edges, we recommend \cite{G99}. See also \cite[Chapter 7]{LP17} for the treatment of percolation on general transitive graphs including homogeneous trees. 

We write $\pr_p$  for the probability measure on subgraphs of $G$ obtained as above.
Edges retained are called {\it open} and edges deleted are called {\it closed}. The
critical percolation probability $p_c$ is defined by
\begin{equation} p_c := \inf\left\{p: \pr_p(\text{at least one of the components of the open subgraph is infinite})>0\right\}. \label{eq:pcdef}\end{equation}
If $G$ is a lattice, then for $p<p_c$, which is called the {\it subcritical regime},  there is no infinite cluster almost surely, and for $p>p_c$, which is called the {\it supercritical regime}, there is one  infinite cluster. Properties of both the subcritical and the supercritical clusters are well understood \cite{G99}.  We also have a fairly good understanding of critical percolation in two dimensions and high dimensions.  On the contrary, not much is known for the intermediate dimensions. It is not even clear whether there is an infinite component at $p_c$. 

In this paper, we consider critical percolation in high dimensional half spaces in high dimensions.  One of the direct motivations for considering critical percolation on fractional lattices is the conjecture \cite[page 378]{KN11} that the 
\[ \text{{\it corner one arm } probability } \pr_{p_c}\left(0\sa{B(n)} (n, n, \ldots, n)\right) \asymp n^{\xi(d)}, \text{ where } \xi(d)=2-2d\]
in high dimensions. Here and later we write $\vx\sa{S} \vy$ to denote the event that $\vx$ is connected to $\vy$ by an open path staying within $S$, and $B(n)$ denotes the box $[-n,n]^d$. In order to prove this conjecture one needs a clear understanding of critical percolation in the fractional spaces. One of the main difficulties here is the reduction of symmetry in case of fractional spaces. The  techniques based on ``lace expansion", which are used to determine the behavior of the two point function for high dimensional lattices, use translation invariance and hence the symmetry of the lattices heavily.

It is also widely believed (see e.g.~\cite[Section 2.2]{HH17})
that for high dimensional lattices the behavior of {\it critical Branching Random Walk} (BRW) is closely related to that of critical percolation.   More formally, the critical exponents, which describe the ``shape" of the clusters, for the two models are expected to attain the same values. 
In particular,  the values of the critical exponents $(\rho, \eta, \zeta, \xi)$ for critical percolation on high dimensional lattices (resp.~fractional spaces)
should match with the values of $(\rho, \eta, \zeta, \xi)$  for critical BRW on high dimensional lattices (resp.~the BRW killed at the boundary of the corresponding fractional space).
The values of  $\rho, \eta$ and $\zeta$ are known for both critical percolation and critical BRW on high dimensional lattices, and the values agree. The critical exponents for the two-point function associated with critical BRW on fractional-spaces are readily computable; in particular, the half-space critical exponent
is $1-d$. It is natural to try to find the critical exponents for critical percolation on fractional-spaces and to compare with the values for critical BRW.

While we are unaware of past work on the critical exponents for critical percolation in
high-dimensional half-spaces, analogous problems have been studied for decades in two
dimensions. Early on, Kesten \& Zhang \cite{KZ87} considered critical percolation on
the two dimensional (angular) fractional space $A_\varphi:=\{(r\cos\theta, r\sin\theta)\in\dZ^2: r\ge 0, \theta\le \varphi\}, \varphi \in [0, 2\pi],$ and showed that the one arm exponent $\rho(\varphi)$ is strictly monotone in $\varphi$.
Using classical methods, one can compute some half-plane critical exponents, such as the ``polychromatic three-arm half-plane exponent" 
(see the lecture notes on two dimensional critical percolation \cite{W09}). These methods are ad hoc, but universal. 
Much later, SLE based methods were developed to study critical percolation on the two-dimensional triangular lattice during the last two decades. These methods have enabled researchers to compute most critical exponents  for critical percolation on half-planes and two-dimensional fractional-spaces of the triangular lattice \cite[Section 3]{SW01}.   
In the case of two dimensional lattices, the two point function critical exponent (and similarly the cluster size exponent) can be derived from the one arm probability using techniques of ``gluing'' \cite{K87}. These gluing arguments also give the asymptotic behavior of the {\it restricted two-point function} $\tau_B(\vx,\vy):=\pr_{p_c}(\vx\sa{B}\vy)$, where $B$ is a two dimensional box and $\vx, \vy\in B$.  In two dimensions, the
restricted two-point function $\tau_B(\vx,\vy)$  within a box $B$ scales like the unrestricted two-point function  $\tau(\vx,\vy)$  as long as the two
points $\vx$ and $\vy$ are far from the boundary $\del B$ of the box $B$.

Unlike the two dimensional lattices, a major difficulty in analyzing critical high dimensional percolation is that we can only control the open clusters in a very indirect way. In case of high dimensions, the gluing  techniques (as mentioned above) do not work even in principle because of the diverging number of spanning clusters, and an analogue of SLE is currently unavailable. As a result,  it took so long (twenty years) to get the one arm exponent  in high dimensional critical percolation from the corresponding cluster size exponent. 

In 1990, a breakthrough was made to bound the two-point function \cite{HS90} using {\it lace expansion}. 
The bound, the so-called \emph{infrared bound}, established the so-called ``triangle condition" and thereby completed the argument of Barsky \& Aizenman \cite{BA91} to obtain the cluster size exponent. The infrared bound was later strengthened to give $\eta = 0$ (see \cite{H08}).
On the other hand, the one arm exponent was obtained (i) first under some unproven assumptions in 2004 \cite{S04}, (ii) without any unproven assumptions in 2011 \cite{KN11}.   

Another of our main results says that, in high dimensions, $\tau_B(\vx,\vy)$ scales as $\tau(\vx,\vy)$, for $\vx$ and $\vy$ far from the boundary $\del B$ of the set $B$ (analogously to the two-dimensional result mentioned above). This result is crucial for our proof of Theorem \ref{Critical Exponents}; we also believe it is interesting in its own right and is a potential tool for studying other properties of open clusters. Our proof is very different from the proof of the two-dimensional analogue, since gluing methods are unavailable in high dimensions.
\begin{thm}\label{thm:boxcon}
	Suppose $M>1$ is any constant. In high dimensions, there are constants $C>c>0$ (depending on $M$ and $d$ only) such that for all $n$ and for all $\vx \neq \vy\in B(n)$, 
	\[ c\norm{\vx-\vy}_{\infty}^{2-d} \le \pr_{p_c}\left(\vx\sa{B(Mn)}\vy\right) \le C\norm{\vx-\vy}_{\infty}^{2-d}.\]
\end{thm}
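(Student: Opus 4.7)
The upper bound is immediate by monotonicity: $\tau_{B(Mn)}(\vx,\vy) \le \tau(\vx,\vy) \le C\|\vx-\vy\|_\infty^{2-d}$, from the Hara-Slade/Hara sharp two-point function asymptotics in high dimensions. My plan for the lower bound is to control the probability that $\vx \leftrightarrow \vy$ in $\dZ^d$ but no connecting open path lies in $B(Mn)$. Conditioning on the open cluster of $\vx$ restricted to $B(Mn)$ and using the independence of edges touching $B(Mn)^c$, a union bound over vertices of that cluster on $\del B(Mn)$ yields the BK-style estimate
\[
\tau(\vx,\vy)\;-\;\tau_{B(Mn)}(\vx,\vy)\;\le\;\sum_{\va \in \del B(Mn)} \tau_{B(Mn)}(\vx,\va)\,\tau(\va,\vy).
\]

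The key technical step is sharply bounding the factor $\tau_{B(Mn)}(\vx,\va)$ for $\va$ on the boundary. My plan is a half-space comparison: if $F$ is the face of $\del B(Mn)$ containing $\va$ and $H_F$ is the half-space on the interior side of $F$, then $B(Mn)\subset H_F$, so $\tau_{B(Mn)}(\vx,\va)\le \tau_{H_F}(\vx,\va)$. Since $\va$ lies on $\del H_F$ while the perpendicular distance from $\vx$ to $\del H_F$ is at most $\|\vx-\va\|_\infty$, case (b) of Theorem~\ref{Critical Exponents} applies to give $\tau_{H_F}(\vx,\va)\le C\|\vx-\va\|^{1-d}$. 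Substituting and performing the face-by-face surface sum $\sum_{\va\in F}\|\vx-\va\|^{1-d}\|\vy-\va\|^{2-d}$ yields a total boundary-sum bound of order $C_M((M-1)n)^{2-d}$, where $C_M$ depends on $M,d$.

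Comparing with the matching lower bound $\tau(\vx,\vy)\ge c\|\vx-\vy\|_\infty^{2-d}$, the ratio of the boundary sum to $\tau(\vx,\vy)$ is at most $C_M\bigl(\|\vx-\vy\|_\infty/((M-1)n)\bigr)^{d-2}\le C_M\bigl(2/(M-1)\bigr)^{d-2}$, which becomes small when $M$ is sufficiently large. Thus the argument closes for $M\ge M_0$ with some large threshold $M_0=M_0(d)$. To handle $M\in(1,M_0)$, I would use a translation reduction: since the midpoint $\vm=(\vx+\vy)/2$ lies in $B(n)$, we have $B_\vm((M-1)n)\subseteq B(Mn)$, so $\tau_{B(Mn)}(\vx,\vy)\ge \tau_{B((M-1)n)}(\vx-\vm,\vy-\vm)$; this is an instance of the theorem at scale $\tilde n = \|\vx-\vy\|_\infty/2$ with effective ratio $\tilde M = 2(M-1)n/\|\vx-\vy\|_\infty$, which exceeds $M_0$ when $\|\vx-\vy\|_\infty$ is small relative to $(M-1)n$.

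The main obstacle will be the corner regime where $\|\vx-\vy\|_\infty$ is comparable to $n$ and $M$ is close to $1$: here the direct boundary-sum bound is of the same order as $\tau(\vx,\vy)$, with a constant that blows up as $M\to1^+$, and the translation reduction produces an effective ratio close to $M-1$, still too small. I expect this case will be resolved by either a bootstrap iteration of the BK expansion---feeding the half-space-type bound back into the expansion to self-consistently improve constants---or a direct second-moment argument on the number of midpoint vertices $\vz\in B_\vm(\|\vx-\vy\|_\infty/4)$ simultaneously box-connected to both $\vx$ and $\vy$, with a tree-graph upper bound on the second moment followed by Paley-Zygmund.
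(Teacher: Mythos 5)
Your upper bound and the BK-style decomposition of $\tau(\vx,\vy) - \tau_{B(Mn)}(\vx,\vy)$ are both fine, but the key technical step of your lower bound is circular. The bound $\tau_{H_F}(\vx,\va) \le C\|\vx-\va\|_\infty^{1-d}$ for $\va$ on the boundary of the half-space $H_F$ is exactly \eqref{eq:twopthsupper} from part (b) of Theorem~\ref{Critical Exponents}, and in the paper this inequality is established only in Section~\ref{sec:twoptclustend} as a downstream consequence of Theorem~\ref{thm:boxcon} itself: its proof invokes Lemma~\ref{lem:fromloweradhoc}, which invokes Lemma~\ref{lem:get}, which invokes Theorem~\ref{thm:mainextend}, whose proof (Section~\ref{sec:proveext}) relies on Theorem~\ref{thm:boxcon}. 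So you cannot use the half-space two-point function bound here without begging the question. Moreover, the refinement from exponent $2-d$ to $1-d$ is not a cosmetic improvement you can drop: if you replace $\tau_{H_F}(\vx,\va)$ by the available unrestricted bound $\tau(\vx,\va) \le C\|\vx-\va\|^{2-d}$, the surface sum $\sum_{\va \in \partial B(Mn)}\|\vx-\va\|^{2-d}\|\vy-\va\|^{2-d}$ is of order $n^{3-d}$, which dominates $\tau(\vx,\vy) \gtrsim \|\vx-\vy\|^{2-d}$ for \emph{every} $M$ (the ratio scales like $n/\|\vx-\vy\| \ge 1$), so the large-$M$ case would not close at all.

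The paper sidesteps the circularity by using a completely different large-$M$ input: in Proposition~\ref{prop:weakboxcon} the quantity $\prob\big(0 \lra \vx \text{ through } B(Mn)^c\big)$ is bounded directly by $C(Mn)^{2-d}$ via (1.12) of \cite{HS14}, a result about full-space clusters that requires no half-space theory. For small $M$, you correctly diagnose the obstacle (when $\|\vx-\vy\|_\infty$ is comparable to $n$ and $M$ is near $1$, neither the direct surface-sum bound nor the translation reduction gives anything), but your two proposed fixes are sketches rather than arguments, and your second-moment idea is aimed at the wrong random variable. The paper's actual resolution (Claims~\ref{ind hyp} and \ref{ind hyp 1} with Lemma~\ref{lem:yrestfirst}) is an induction that reduces $M$ in finitely many steps by enlarging the box one coordinate direction at a time by a factor $\alpha(M) = \min\{4/3, (M+1)/2\}$. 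At each step, for $\vx$ in the newly-reachable region, one conditions on the cluster of $\vx$ in a small box $D$ around $\vx$, shows it has $\asymp n^2$ regular boundary vertices on a carefully placed face $Q$ of $\partial D$ lying inside the region already covered by the inductive hypothesis, and then runs a Paley--Zygmund second-moment argument on the number $Y_Q^K$ of such boundary vertices $\vz$ for which the edge $\{\vz,\vz'\}$ is \emph{pivotal} for $\{0 \lra \vx\}$ with $\vz \sa{B(Mn)} 0$; the pivotality guarantees $Y_Q^K \le 1$ almost surely, which makes the second moment tractable. This is quite different from your proposal of counting midpoint vertices $\vz$ simultaneously box-connected to both $\vx$ and $\vy$, and it is the part of the proof that would genuinely need to be reinvented.
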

Our results hold in high dimensions; the condition that $d \geq 11$ could be relaxed to $d > 6$ if one could prove that the cluster size and two-point  function satisfy
\beqa
\pr_{p_c}(\#\{\vx\in \dZ^d: 0\lra \vx\}>n) \asymp n^{-1/2} \label{SizeEst} \\
\pr_{p_c}(\vx\lra \vy)\asymp \norm{\vx-\vy}^{2-d}. \label{2PtEst}
\eeqa
For high dimensional lattices, \eqref{SizeEst} was established  in \cite{AB87, BA91, HS90}, and \eqref{2PtEst} was proved in \cite{H08, HS90, FH17} (for nearest-neighbor  lattices) and \cite{HHS03} (for spread-out lattices). Like many models of statistical physics,  the critical exponents for critical percolation on $\dZ^d$ are expected to  attain the same value as they do on an infinite regular tree for all $d$ large enough. 
For example,  it is well known that \eqref{SizeEst} holds for critical percolation on an infinite regular tree \cite{AN27}, and the authors of  \cite{CC87} have worked on other aspects  of the ``tree-like'' behavior of the high dimensional lattices. 
The dimension at which the ``tree-like" behavior starts to occur  is often called the {\it upper critical dimension}. It is believed that the upper critical dimension for critical percolation  is 6, so \eqref{SizeEst} is expected to hold for all lattices with dimension larger than 6. So far, it is only known to hold in high dimensions. 

Other than the critical exponents discussed above, researchers in percolation theory have also worked on other aspects of critical percolation clusters, including spanning clusters within cubes \cite{A97}, scaling limits for critical percolation on $\dZ^d$ (see \eg  \cite{HS00, HS00a, S01, AB99}), 
size of the intrinsic balls \cite{KN09, S10}, and structural properties of high dimensional percolation clusters on tori  \cite{HS14}. We mention also the non-backtracking lace expansion \cite{FH17}, which aims to prove that the critical exponents attain the same value for all dimensions higher than the upper critical dimension.

\subsection{Outline of the proof}
The first result proved is a lower bound on the half-space one-arm probability --- which establishes a portion of (a) from Theorem \ref{Critical Exponents} --- in Section \ref{sec:pihlb}. We argue by showing that with uniformly positive probability, there are at least order $n^{d-4}$ vertices on the boundary of a sidelength $n$ box having arms across this box (which are necessarily half-space arms). The proof is via a second-moment argument on a suitably defined set of sufficiently regular spanning clusters $\sS$. Since the expected number of vertices on the boundary having such arms is at most $n^{d-1}$ times the half-space one-arm probability, the bound follows. This argument does not depend on Theorem \ref{thm:boxcon} or the remainder of Theorem \ref{Critical Exponents}.

The remaining arguments rely on Theorem \ref{thm:boxcon}, and so we prove it next (in Section \ref{sec:rtwopt}). It is based on an iterative improvement of the following form: assume that for some $M > 1$, we have $\tau_{B(Mn)}(0, \vx) \geq c \|\vx\|^{2-d}$ uniformly in $n$ and in $\vx \in B(n)$. Then $\tau_{B((M+1)n/2)}(0, \vx) \geq c' \|\vx\|^{2-d}$ uniformly in $n$ and $\vx \in B(n)$, for some $c' > 0$. To show this iterative improvement, we use the inductive hypothesis to build connections from $\vx$ to $0$ lying in $B((M+1)n/2)$. The key is conditioning on $\vx$ having an arm to distance $(M-1)n/2$ directed away from the boundary of the large box; the endpoint of this arm is farther from $\partial B((M+1)n/2)$ than $\vx$, and it can thus be extended to $0$ using the bound on $\tau_{B(Mn)}$.
 
We next upper-bound the one-arm probability in (a) from Theorem \ref{Critical Exponents}. Letting $\pi_H(n)$ be the half-space one-arm probability to distance $n$, we bound $\pi_H(2n)$ in terms of $\pi_H(n)$. Conditional on an arm to distance $n$, we show that either the arm is likely to go extinct before reaching distance $2n$ (corresponding to a small contribution to $\pi_H(2n)$) or $0$ is typically connected, by open paths avoiding the box $B_{-}(n/2) = [-1-n/2,-1] \times[-n/2,n/2]^{d-1}$, to order $n^4$ vertices having $\ell^\infty$ norm of order $n$.  The latter probability is shown to be small by a mass-transport argument. Roughly, if $0$ were typically connected to too many vertices at distance $n$ by paths avoiding $B_{-}(n)$, then $n \ve_1$ would typically be connected to many vertices on the boundary of $B_{-}(n)$. This would mean that the $\Zd$ open cluster of $n \ve_1$ is typically very large, in contradiction to existing bounds.

Parts (b) and (c) of Theorem \ref{Critical Exponents} use part (a) as input; this is in contrast to $\Zd$, where the values of $\eta$ and $\zeta$ were found first and used to show $\rho = 1/2$. To show the bounds for $\tau_H$, we show that conditional on $0$ having a half-space arm to distance $n$, typically $0$ is connected to order $n^2$ vertices on the top of the sidelength $n$ half-space box. The probability of further connection is now estimated using techniques like those used to prove Theorem \ref{thm:boxcon}. The cluster size is now controlled using the arm probability from (a) and moment bounds using the estimates on $\tau_H$ from (b).

\subsection{Organization of the paper}
In Section \ref{sec:defsec}, we standardize our notation for subgraphs of $\Zd$ and basic notation and background on percolation. We then (in Section \ref{sec:masstrans}) define the mass-transport method and prove an abstract mass-transport result, Lemma \ref{lem:transport}.

In Section \ref{sec:pihlb}, we show the lower bound on the one-arm probability from (a) of Theorem \ref{Critical Exponents}: $\pi_H(n) \geq c n^{-3}.$ Section \ref{sec:extend} is devoted to results on cluster ``extensibility'' which will be crucial for proving Theorem \ref{thm:boxcon} and the upper bound on the one-arm probability. In Section \ref{sec:rtwopt}, we prove Theorem~\ref{thm:boxcon}, and in Section \ref{sec:proveext} we prove the main extensibility result from Section \ref{sec:extend}.

In Section \ref{sec:ubonearm}, we use the preceding to show $\pi_H(n) \leq C n^{-3}$, completing the proof of (a) from Theorem \ref{Critical Exponents}. This section breaks up into two parts: the choice and analysis of a particular mass-transport rule, and an iterative bound on $\pi_H$ relying on our mass-transport results. Finally, in Section \ref{sec:twoptclustend}, we bound $\tau_H$ and the tail of the cluster size distribution, proving (b) and (c) of Theorem~\ref{Critical Exponents}.

\section{Definitions and preliminary results}\label{sec:defsec}
\chJH{We will for simplicity consider explicitly the nearest-neighbor model on $\Zd$ for a fixed value of $d \geq 11$. We will not need to consider the measures $\prob_p$ for any $p$ other than $p_c$, so for the remainder of the paper we write $\prob$ for $\prob_{p_c}$. Recall that the value of $p_c$ for a $d$-dimensional half-space (defined, as on $\Zd$, via \eqref{eq:pcdef}) is the same \cite{GM90} as on $\Zd$.  }

\chJH{{\bf A note about constants:} the symbols $C$, $c$ generally represent positive constants whose values may change from line to line (and even within lines); we sometimes number them to refer to them locally. Other symbols such as $\varepsilon$ will sometimes refer to constants depending on context. When we wish to make clear the possible dependence of a constant on a parameter, we do it in a case-by-case basis, for instance by writing $C = C(K)$. Numbered constants designed to be retained on a long-term or global basis will be denoted $a_i, A_i$; certain specially labeled constants, such as $c_*$ from Theorem \ref{thm:mainextend}, will also be referred to several times throughout the paper.
}

\subsection{Graph notation}
We abuse notation and write $\Z^d$ for both the vertex set of integer vectors as well as the graph with vertex set $\Z^d$ and nearest neighbor edges. We will write $\vx \sim \vy$ if $\vx$ and $\vy$ are neighbors in $\Z^d$ -- that is, if there is an edge $e \in \Z^d$ with $e = \{\vx, \vy\}$. The norm notation $\| \vx \|$ refers to the $\ell^\infty$ norm $\|\vx \|_\infty$ unless an alternate subscript is given. \chJH{$\#A$ denotes the cardinality of a set $A$.}

  \begin{wrapfigure}{R}{0.4\textwidth}
 \begin{center}
\includegraphics[height=5cm,width=7cm]{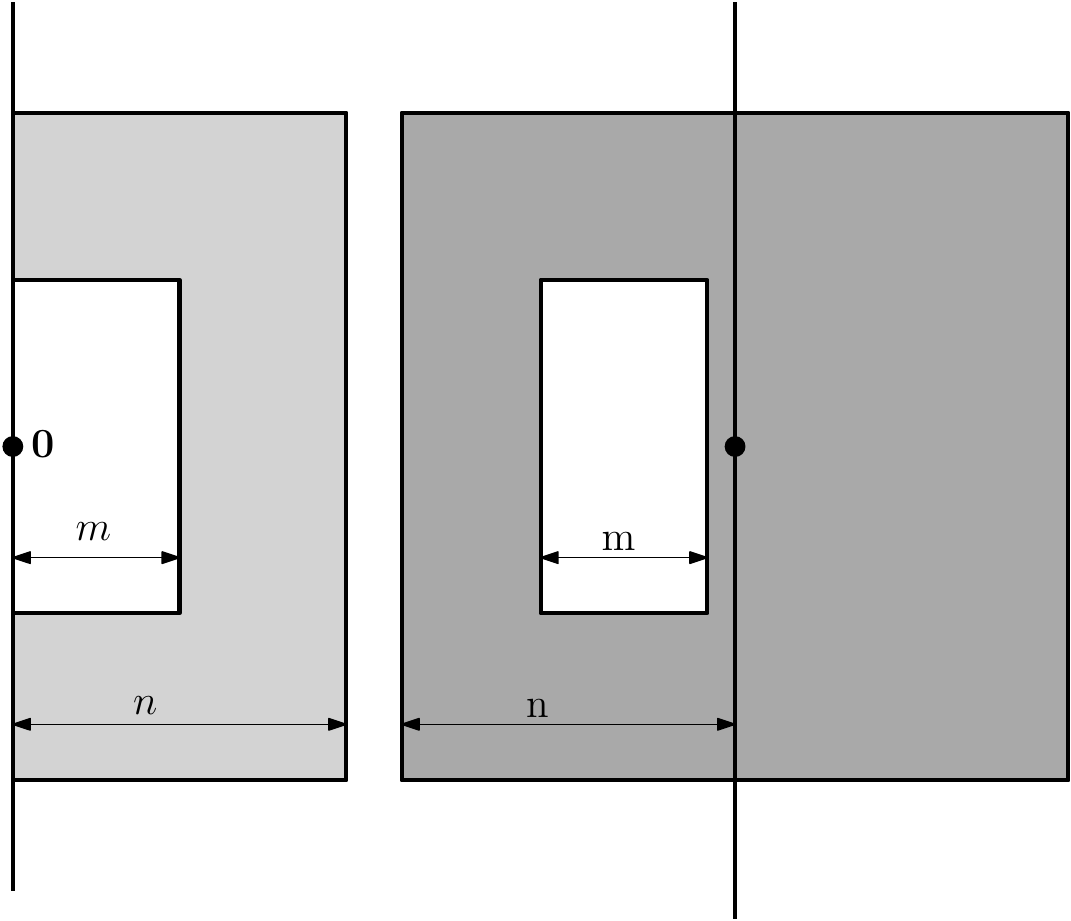} 
  \end{center}
  \caption{{\small Left:~$Ann_H(m,n)$.~Right:~$Ann'(m,n)$.}}
  \label{Fig: Ann}
\end{wrapfigure}
Let $\ve_i$ denote the $i$th standard basis vector. For $\vx \in \Zd,$ we write $x(i) = \vx \cdot \ve_i$ for $i = 1, \ldots, d$. Define the shifted half-spaces
\[\Zd_+(n) = \{\vx \in \Zd: \, x(1) \geq n\}; \]
for brevity, we write $\Z^d_+ = \Z^d_+(0)$.
The corresponding boundary hyperplane is
\[S(n) := \{\vx: \,x(1) = n \}\ . \]
The usual $\ell^\infty$-box is $B(n) := \{\vx: \, \|\vx\| \leq n\}$ with boundary $\partial B(n) = \{\vx: \, \|\vx\| = n\}$. 
Shifted boxes will be important to us; we generally denote the box centered at $\vx$ by $\vx + B(n)$. Note that the above definitions extend to non-integer values of $n$, so that for instance $B(3.5) = B(3)$. 

Generally, for a set of vertices $V$, we let $\partial V$ denote the interior vertex boundary relative to $\Zd$:
\[ \partial V = \{\vx \in V: \, \exists \vy \in \Zd \setminus V \text{ such that } \vy \sim \vx\}\ .\]
\chJH{We will sometimes need to consider boundaries relative to other subgraphs (especially $\Zd_+$). The half-space analogue of a box will be denoted $B_H(n) := B(n)\cap \Z_+^d$. The boundary of $B_H(n)$, considered as a subgraph of $\Zd_+$, is written $S'(n):= \Zd_+ \cap \partial B(n)$.}
\chJH{We also introduce $Rect(n):= [0, n] \times [-4n, 4n]^{d-1}$ as a slightly fattened version of $B_H(n)$.}

The annulus $Ann(m,n) := B(n) \setminus B(m)$. 
The corresponding half-space annuli are $Ann_H(m,n) := B_H(n) \setminus B_H(m)$.
We will often refer to shifted annuli, where one side of the inner box lies along $S(0)$. Namely, we define \chJH{$B_{-}(n) = -\ve_1 - B_H(n)$}, and (for $n \geq m$) $\Anns(m,n) = \left[B(n) \setminus B_{-}(m)\right]$ see Figure \ref{Fig: Ann}).
\chJH{The outer boundaries of annuli are defined as the vertex boundaries of their outer boxes, relative to the ambient subgraph: $\partial^+ Ann(m,n) = \partial^+\Anns(m,n) =  \partial B(n)$, and $\partial^+ Ann_H(m,n) = S'(n)$.
Similarly, the inner boundary $\partial^- Ann(m,n) = \partial B(m+1)$, with analogous definitions for the other annuli: $\partial^- \Anns(m,n) = -\ve_1 -\partial B_H(m+1)$ (where this boundary is taken relative to $\Zd$), and $\partial^- Ann_H(m,n) = S'(m+1)$.}

\chJH{We will occasionally consider graph boundaries with respect to general subgraphs of $\Zd$. If $A_0 \subseteq A_1$ are finite subsets of (the vertices of) $\Zd$, let 
	\begin{equation}
	\label{eq:genbdyv}
	\partial_{A_1} A_0 = \{\vx \in A_0: \text{ there is some } \vy \in A_1 \setminus A_0 \text{ with } \vy \sim \vx \}\ .
	\end{equation}}
\subsection{Percolation notation and previous results}
A generic percolation configuration is written $\omega = (\omega_e)_e$. The random variables $\omega_e$ are i.i.d. Bernoulli$(p_c)$, and the edge $e$ is open (resp. closed) if $\omega_e = 1$ (resp. $\omega_e = 0$). The open graph is the random subgraph of $\Zd$ whose vertex set is $\Zd$ and whose edge set is $\{e = \{\vx,\vy\}: \, \vx \sim \vy, \, \omega_e = 1\}$.  Open clusters are subgraphs of this open graph; we sometimes identify an open cluster with its vertex set.


 For $\vx, \, \vy \in \Zd$, we write $\{\vx \leftrightarrow \vy\}$ for the event that there is a path from $\vx$ to $\vy$ consisting of open edges \chJH{of $\Zd$} --- in other words, when $\vx$ and $\vy$ lie in a common open cluster \chJH{of $\Zd$}. We define $A \leftrightarrow B$ for sets $A, B \subseteq \Zd$ similarly. We denote the unique open cluster containing $\vx$ by $C(\vx):= \{\vy: \, \vx \leftrightarrow \vy\}$. 
We are interested in the probabilities of various connectivity events; for reference, we state some well-known results. By techniques of the lace expansion, it has been derived in this setting that there are constants $0<\Cl[sma]{c:twoptlow} < \Cl[lga]{c:twopthigh}<\infty$ (depending only on $d$) such that
\begin{equation}
  \label{eq:twopt}
 \Cr{c:twoptlow}\|\vx - \vy\|^{2-d}\leq  \prob\left(\vx \leftrightarrow \vy \right) \leq \Cr{c:twopthigh}\|\vx - \vy\|^{2-d} \text{ for all } \vx,\, \vy \in\Zd\ .
\end{equation}
We denote the probability (``two-point function'') appearing in \eqref{eq:twopt} by $\tau(\vx,\vy)$. \chJH{In the above and in many places where the two-point function appears, we use the convention $\|0\|^{2-d} = 1$; this minor abuse allows us to avoid some cumbersome expressions when summing products of $\tau$.}

\
\chJH{Recall the definition of the one-arm probability: the probability that a site has a connection to $\ell^\infty$ distance $n$. The result of \cite{KN11} that $\rho = 1/2$ in high dimensions mentioned above in fact shows that the one-arm probability $\pi(n)$ is asymptotic to $n^{-2}$:}
\begin{equation}
  \label{eq:onearmprob}
\Cl[sma]{c:armlow} n^{-2}\leq  \pi(n) := \prob(0 \lra \partial B(n)) \leq \Cl[lga]{c:armhigh} n^{-2} \text{ for $n \geq 1$ and  constants } 0<\Cr{c:armlow}<\Cr{c:armhigh}<\infty. 
\end{equation}

\chJH{We also need some bounds on the first and second moments of cluster sizes.}
\begin{lem}\label{lem:momentaiz}
  We have
\[\E \left[\#C(0) \cap B(n)\right] \asymp n^2; \quad \E \left[\left(\#C(0) \cap B(n)\right)^2\right] \leq C n^6\ . \]
\end{lem}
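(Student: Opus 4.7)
The plan is to expand
$$\E[\#C(0)\cap B(n)] = \sum_{\vx \in B(n)} \prob(0 \lra \vx) = \sum_{\vx \in B(n)} \tau(0,\vx),$$
and then invoke the two-point function bounds \eqref{eq:twopt}. With the convention $\|0\|^{2-d}=1$, one has $\tau(0,\vx) \asymp \|\vx\|^{2-d}$, so the quantity is comparable to $\sum_{\vx \in B(n)} \|\vx\|^{2-d}$. A straightforward comparison with the integral $\int_1^n r^{2-d}\cdot r^{d-1}\,dr \asymp n^2$ (valid since $d \geq 11$) yields both the upper and lower bounds $\asymp n^2$.

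\textbf{Second moment.} The plan is to apply the standard tree-graph (BK) inequality, which says
$$\prob(0 \lra \vx,\, 0 \lra \vy) \;\leq\; \sum_\vz \tau(0,\vz)\,\tau(\vz,\vx)\,\tau(\vz,\vy)$$
for all $\vx, \vy \in \Zd$. This follows from the observation that on the event $\{0 \lra \vx,\, 0 \lra \vy\}$ there must exist a ``branch point'' $\vz$ from which three edge-disjoint open paths reach $0$, $\vx$, and $\vy$ respectively, combined with the BK inequality. Summing over $\vx, \vy \in B(n)$ gives
$$\E\bigl[(\#C(0)\cap B(n))^2\bigr] \;\leq\; \sum_{\vz \in \Zd} \tau(0,\vz)\Bigl(\sum_{\vx \in B(n)} \tau(\vz,\vx)\Bigr)^{\!2}.$$

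\textbf{Completing the second-moment bound.} I would split the outer sum into the two regimes $\|\vz\| \leq 2n$ and $\|\vz\| > 2n$. In the first regime, $B(n) \subseteq \vz + B(3n)$, so by the same integral comparison as above $\sum_{\vx \in B(n)} \tau(\vz,\vx) \leq C n^2$; the contribution is thus at most $C n^4 \sum_{\vz \in B(2n)} \|\vz\|^{2-d} \leq C n^6$. In the second regime, every $\vx \in B(n)$ satisfies $\|\vx-\vz\| \geq \|\vz\|/2$, hence $\tau(\vz,\vx) \leq C\|\vz\|^{2-d}$ and $\sum_{\vx \in B(n)} \tau(\vz,\vx) \leq C n^d \|\vz\|^{2-d}$. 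The resulting contribution is
$$C n^{2d} \sum_{\|\vz\| > 2n} \|\vz\|^{(2-d)+2(2-d)} \;=\; C n^{2d}\!\!\sum_{\|\vz\| > 2n}\!\!\|\vz\|^{6-3d} \;\asymp\; C n^{2d}\cdot n^{6-2d} \;=\; C n^6,$$
where the sum converges since $5-2d < -1$ for $d \geq 11$. There is no substantive obstacle: the only mild subtlety is that the tree-graph diagram $\sum_\vz \tau(0,\vz)\tau(\vz,\vx)\tau(\vz,\vy)$ has no uniform-in-$\vx,\vy$ bound, so one must exploit the localization of $\vx,\vy$ in $B(n)$ via the case split above to obtain the correct power of $n$.
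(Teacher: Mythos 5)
Your proposal is correct and matches the paper's approach: both compute the first moment by summing the two-point function bound \eqref{eq:twopt}, and both bound the second moment by the Aizenman--Newman tree-graph inequality, decomposing $\prob(0\lra\vx,\,0\lra\vy)$ over a branch point $\vz$. The paper simply cites \cite{AN84} and \cite[Lemma 2.1]{KN11} for the second moment rather than carrying out the $\vz$-sum, which you have done explicitly (and correctly) via the case split $\|\vz\|\leq 2n$ versus $\|\vz\|>2n$.
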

\begin{proof}
  The first moment is just $\sum_{\vx \in B(n)} \tau(0,\vx)$, and the asymptotic follows by summing \eqref{eq:twopt}. The second moment bound follows using the ``tree graph'' method of Aizenman \& Newman \cite{AN84}, decomposing $\prob(\vx \leftrightarrow 0, \vy \leftrightarrow 0)$ based on the meeting point of the open paths from $\vx$ to $0$ and from $\vy$ to $0$. See, for instance, Lemma 2.1 from \cite{KN11}.
\end{proof}


If $D \subseteq \Zd$ is a set of vertices, we let $A \sa{D} B$ denote the event that there is an open path of edges, all of whose endpoints are vertices of $D$, connecting $A$ to $B$. 
The cluster of a site $\vx$ restricted to a set $A$ is written
\[C_A(\vx) := \left\{\vy \in A: \vx \stackrel{A}{\longleftrightarrow} \vy \right\}\ . \]


We define the restricted connectivity function $\tau_A(\vx,\vy) = \prob\left(\vx \stackrel{A}{\longleftrightarrow} \vy \right)$. Of special interest is the half-space two-point function invoked in the statement of Theorem \ref{Critical Exponents}, \chJH{written (with some abuse of notation) as} $\tau_H(\vx, \vy) := \tau_{\Zd_+}(\vx,\vy)$. We similarly write $C_H(\vx):= C_{\Zd_+}(\vx)$.

\chJH{The half-space one-arm probability is defined by $\pi_H(n):= \prob(0 \sa{\Zd_+} S'(n))$. There is a possible alternate definition of $\pi_H$: namely, $\prob(0 \sa{\Zd_+} S(n))$, the probability that there is a half-space arm to distance $n$ in the $\ve_1$-direction. We note that the arguments in this paper in fact show that both of these probabilities are asymptotic to $n^{-3}$; see \eqref{eq:armtofarside} and the surrounding discussion below.}

\chJH{We will make reference to the \emph{Harris-FKG} (or ``FKG'') and \emph{BK-Reimer} (or ``BK'') correlation inequalities. We direct the reader to \cite[Chapter 2]{BR06} for statements of, and references to the literature on, these and related inequalities.}





\subsection{Mass-transport\label{sec:masstrans}}
Our proof of the upper bound $\pi_H(n) \leq C n^{-3}$ of Theorem \ref{Critical Exponents} involves considering the point-of-view of a boundary vertex of a spanning cluster of a large box --- that is, the configuration seen from a typical $\vx \in \partial B(n)$ lying in such a spanning cluster. This is made precise by the following lemma, which is an application of the general mass-transport technique. See \cite[Chapter 8]{LP17} for more information about mass-transport.
\begin{lem}
\label{lem:transport}
  Let $h(\vx,\vy)$ be a function from $\Z^d \times \Z^d$ to $[0, \infty]$ which is translation-invariant in the following sense: $h(\vx+\vz, \vy+\vz) = h(\vx,\vy)$ for all $\vx,\,\vy,\,\vz \in \Zd$. Then, for any $\vx \in \Z^d$,
\[\sum_\vz h(0, \vz) = \sum_{\vz}h(\vx,\vz) = \sum_{\vz}h(\vz,\vx) = \sum_\vz h(\vz,0)\ . \]
\end{lem}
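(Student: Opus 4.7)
The statement reduces to a routine change of variables using translation invariance on the abelian group $\Z^d$; no nontrivial Fubini argument (as in the classical mass-transport principle for transitive graphs) is required, because all the "flows" here are already shift-invariant on an amenable abelian group. I would prove the four equalities in two steps.

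First, I would handle the two outer equalities. For the equality $\sum_\vz h(0,\vz) = \sum_\vz h(\vx,\vz)$, apply the hypothesis with shift $-\vx$: for every $\vz$,
\[
h(\vx,\vz) = h\bigl(\vx+(-\vx),\vz+(-\vx)\bigr) = h(0,\vz-\vx).
\]
Since $\vz \mapsto \vz - \vx$ is a bijection of $\Z^d$ and the summands lie in $[0,\infty]$ (so all sums are unambiguously defined, whether finite or $+\infty$), reindexing by $\vw = \vz-\vx$ gives $\sum_\vz h(\vx,\vz) = \sum_\vw h(0,\vw)$, which is the first equality. The same substitution in the second slot gives $h(\vz,\vx) = h(\vz-\vx,0)$, and reindexing yields $\sum_\vz h(\vz,\vx) = \sum_\vz h(\vz,0)$, which is the third equality.

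Second, I would derive the middle equality $\sum_\vz h(\vx,\vz) = \sum_\vz h(\vz,\vx)$. By the two equalities just established, this reduces to showing
\[
\sum_\vz h(0,\vz) = \sum_\vz h(\vz,0).
\]
For this, apply translation invariance with shift $-\vz$ (pointwise in $\vz$): $h(0,\vz) = h(-\vz,0)$. Reindexing the sum by $\vw = -\vz$, which is again a bijection of $\Z^d$, gives $\sum_\vz h(0,\vz) = \sum_\vw h(\vw,0)$, as required.

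There is no genuine obstacle here; the only thing to mention explicitly is that because $h$ is $[0,\infty]$-valued, the sums are well-defined as elements of $[0,\infty]$ and may be freely reindexed by any bijection of $\Z^d$ without convergence issues. The proof is then essentially a two-line calculation: one change of variables for each slot of $h$.
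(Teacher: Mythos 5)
Your proof is correct and follows essentially the same route as the paper's: the key step $\sum_\vz h(0,\vz)=\sum_\vz h(\vz,0)$ is obtained by the identical substitution $h(0,\vz)=h(-\vz,0)$ followed by reindexing, and the outer equalities (which the paper dispatches with ``follows similarly'') are handled by the analogous one-slot changes of variables. Your remark that the $[0,\infty]$-valued summands make all reindexings unconditionally valid is a correct and worthwhile clarification that the paper leaves implicit.
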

\begin{proof}
  Note that $h(0, \vz) = h(0 - \vz, \vz-\vz) =  h(-\vz, 0)$, so
  \begin{align*}
    \sum_{\vz}h(0,\vz) = \sum_{\vz}h(-\vz, 0) = \sum_{\vz} h(\vz, 0)\ .
  \end{align*}
The fact that the value is unchanged when replacing $0$ by $\vx$ follows similarly, again using the translation invariance of $h$.
\end{proof}

Lemma \ref{lem:transport} will be applied to particular \emph{mass-transport rules}. A mass-transport rule is a function $\mass(\cdot, \cdot)$ on $\Zd \times \Zd$ assigning to each pair $\vx,\vy$ a nonnegative random variable $\mass(\vx,\vy) = \mass[\omega](\vx,\vy)$ in a translation-covariant way. In other words, for almost every realization $\omega = (\omega_e)_e$  of the percolation process, we have
\[\mass[\omega](\vx + \vz, \vy + \vz) = \mass[\Theta_\vz \omega](\vx, \vy)\ , \]
where $(\Theta_\vz \omega)_e = \omega_{e +\vz}$ (and addition of a vertex and an edge is defined by $\{\va, \vb\} + \vz = \{\va+\vz, \vb+\vz\}$).

Such an $\mass(\vx,\vy)$ is referred to as the ``mass sent from $\vx$ to $\vy$.'' For a given choice of $\mass$, we apply Lemma \ref{lem:transport} to $h(\vx,\vy) = \E \mass(\vx,\vy)$ (translation invariance of $h$ follows from the translation covariance of $\mass$). In this case, letting $\send = \sum_\vz \mass(0,\vz)$ and $\get = \sum_\vz \mass(\vz,0)$, the lemma states
\[\E \send = \E \get\ . \]

\section{\chJH{Lower bound on $\pi_H(n)$}}\label{sec:pihlb}

Our main goal in this section is to prove \chJH{the lower bound of part (a) of Theorem \ref{Critical Exponents}:}
\begin{prop} \label{hslbarm}
	There is a constant $c=c(d)$ such that 
	$\pi_H(n) \geq c n^{-3}$
	\chJH{for all $n \geq 1$}.
\end{prop}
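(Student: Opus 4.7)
My plan is to prove $\pi_H(n) \ge cn^{-3}$ by combining a translation-invariance reduction with a second-moment (Paley--Zygmund) argument. The key observation is that for any $\vu \in S(-n) \cap B(n/2)$, the box $B(n)$ is contained in the half-space based at $\vu$ (since $u(1) = -n$ implies $B(n) \subset \vu + \Zd_+$). Hence the event $\{\vu \sa{B(n)} S(n)\}$ forces $\vu$ to have a half-space arm of length $2n$, and translation invariance in the $d-1$ coordinates tangent to $S(-n)$ yields
\[\pe\bigl[\,\#\{\vu \in S(-n) \cap B(n/2) : \vu \sa{B(n)} S(n)\}\bigr]\ \le\ \#(S(-n) \cap B(n/2)) \cdot \pi_H(2n)\ \le\ C n^{d-1}\pi_H(2n).\]
Thus the desired bound reduces to showing that this expected count (call it $\pe N$) is at least $c n^{d-4}$.

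To get the expectation bound, I would show that with uniformly positive probability there are $\ge cn^{d-4}$ left-face vertices lying in a cluster that spans $B(n)$. The first-moment input comes from FKG combined with the unrestricted one-arm asymptotics \eqref{eq:onearmprob}: for an interior vertex $\vu \in B(n/4)$, the two increasing events $\{\vu \lra S(-n)\}$ and $\{\vu \lra S(n)\}$ each have probability comparable to $\pi(n) \asymp n^{-2}$, so by FKG
\[\prob\bigl(\vu \lra S(-n)\text{ and }\vu \lra S(n)\bigr) \ge c n^{-4}.\]
Summing over $\vu \in B(n/4)$, at least $cn^{d-4}$ interior vertices lie (in expectation) in a cluster touching both opposite faces. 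I would then define $\sS$ to consist of open clusters $C$ that meet both faces and satisfy regularity conditions: a size truncation $|C| \le Cn^4$ (justified by Lemma \ref{lem:momentaiz}), a localization condition ensuring that the spanning connections essentially stay inside $B(n)$ (controlled by an extra one-arm factor of $\pi(Cn) = O(n^{-2})$), and a lower bound on the footprint $|C \cap S(-n) \cap B(n/2)|$. After accounting for these truncations, $\pe|\sS| \ge cn^{d-4}$.

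The second-moment bound $\pe|\sS|^2 \le C(\pe|\sS|)^2$ is obtained by the tree-graph decomposition of Aizenman--Newman \cite{AN84} combined with the triangle condition that holds in high dimensions. The regularity conditions in the definition of $\sS$ are chosen precisely so that the resulting four-point sum collapses to $O((\pe|\sS|)^2)$ rather than picking up extra powers of $n$ from cluster-size fluctuations. Paley--Zygmund then gives $\prob(|\sS| \ge cn^{d-4}) \ge c' > 0$; since each cluster counted in $\sS$ contributes at least one distinct vertex to $N$, we deduce $\pe N \ge c n^{d-4}$, which yields $\pi_H(n) \ge cn^{-3}$.

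The main obstacle I anticipate is the joint design of $\sS$ and the second-moment estimate. Truncations that are too loose leave cluster-size fluctuations inflating $\pe|\sS|^2$ by factors of $n$, while truncations that are too tight remove too many clusters needed for the first moment. Balancing these using the triangle condition (which serves as the key analytic input replacing the BK bound in the four-point expansion) is the technical heart of the argument. Notably, this whole scheme uses only \eqref{eq:twopt}, \eqref{eq:onearmprob}, and Lemma \ref{lem:momentaiz}, which is consistent with the outline's remark that this lower bound is independent of Theorem \ref{thm:boxcon}.
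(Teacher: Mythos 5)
Your plan shares the second-moment/Paley--Zygmund skeleton and the mass-counting reduction with the paper's proof, but the key first-moment input is chosen differently and it is too weak. You lower-bound the probability that an interior vertex $\vu$ lies in a cluster spanning from $S(-n)$ to $S(n)$ by applying FKG to the two events $\{\vu \lra S(-n)\}$ and $\{\vu \lra S(n)\}$, giving $\ge cn^{-4}$. This is the wrong order: the scheme needs this probability to be of order $n^{-2}$. To see why, trace your numerology carefully: with $\prob(\vu \text{ in spanning cluster}) \ge cn^{-4}$ and the size truncation $\#\cC \le Cn^4$, the expected number of clusters in $\sS$ is at most $cn^{d-4}/(Cn^4) = c'n^{d-8}$, not $cn^{d-4}$ as you wrote. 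Even if you then count $\approx n^2$ left-face vertices per cluster (the correct footprint scale, which you'd need a Lemma \ref{lem:nofurther}-style argument to justify), you would get $\E N \gtrsim n^{d-6}$, hence only $\pi_H(n) \gtrsim n^{-5}$, which is too weak by a factor of $n^2$.

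The paper avoids exactly this loss by considering annulus-spanning rather than opposite-face-spanning clusters. For $\vx \in B(n/2)$, the event $\{C(\vx) \in \br{Ann(n,3n)}\}$ is simply $\{\vx \lra \partial B(3n)\}$, a \emph{one-arm} event of probability $\asymp n^{-2}$ --- no FKG product is needed, and there is no $n^2$ loss. The half-space arms are then harvested not from a face of the spanning box but from vertices on the intermediate sphere $\partial B(2n)$ which connect inward to $B(n)$ through $B(2n)$; these automatically have half-space arms of length $n$. Your reduction via $\vu \in S(-n)$ is geometrically appealing, but it commits you to a two-opposite-arm estimate for which FKG is genuinely lossy in this setting (the true order is $n^{-2}$, as your own conjectured answer forces, but FKG can only certify $n^{-4}$). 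Proving the sharper two-arm estimate would itself require extensibility machinery the paper only develops later, so the annulus choice is not cosmetic --- it is what makes the lower bound self-contained. Separately, you do not explain how the footprint lower bound $\#[\cC \cap S(-n) \cap B(n/2)] \gtrsim n^2$ can be imposed without destroying the first moment; this is precisely the role of the paper's Lemma \ref{lem:nofurther} (small-footprint spanning clusters are rare), and your proposal would need an analogue of it.
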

\chJH{Recall that $Ann(m,n) = B(n) \setminus B(m)$ is the annulus of in-radius $m$ and out-radius $n$.} 
\begin{defin}
	For $r,s\in \dN$ with $r<s$, let 
	\chJH{$ \br{Ann(r,s)}$} 
	be the set of all open clusters \chJH{of} $\Zd$ which \chJH{intersect} both $B(r)$ and $\del B(s)$. 
\end{defin}

The clusters belonging to $\br{Ann(r,s)}$ will be called $Ann(r,s)$-spanning clusters. Note that connectivity in the above definition is determined relative to $\Zd$ and not the annulus; in particular, if $\cC \in \br{Ann(r,s)},$ then $\cC \cap Ann(r,s)$ may be a disconnected set.
We will mostly work with the annulus $Ann(n,3n)$.
For $\cC\in\br{Ann(n,3n)}$, let $X_{\cC}$ denote the number of \chJH{vertices of $\del B(2n) \cap \cC$ which can access $\partial B(n)$ via open paths within $B(2n)$}. More precisely,
\[X_{\cC} := \#\left\{\vx \in \del B(2n) \cap \cC: \vx \sa{B(2n)} B(n) \right\}\ . \]

Next we
define a collection $\sS$ of \chJH{``regular'' annulus} spanning clusters with certain regularity properties. Roughly speaking, \chJH{$\cC \in \sS$} if 
\begin{enumerate}
	\item $X_{\cC}$ is large enough so that $\cC$ \chJH{is likely to extend to the boundary of a} larger ball of radius $O(n)$, say, $B(5n)$. That is, $X_{\cC} \gtrsim n^2$.
	\item $\cC$ contains $\approx n^4$ vertices in boxes of side length $\approx n$.
\end{enumerate}
To be more precise, let $\eta>0$ and
\begin{align*}
\sS_\eta &:= \{\cC\in\br{Ann(n,3n)}: \,   X_{\cC} \geq \eta n^2,
\#\left[\cC \cap Ann(3n,5n)\right] \geq \eta n^4,\,
\#\left[\cC \cap B(5n)\right] \leq \eta^{-1} n^4\}\ .
\end{align*}
Note that $\sS_\eta$ depends on $n$.

\begin{figure}
\centering
(a)\includegraphics[width=7.7cm, height=6cm]{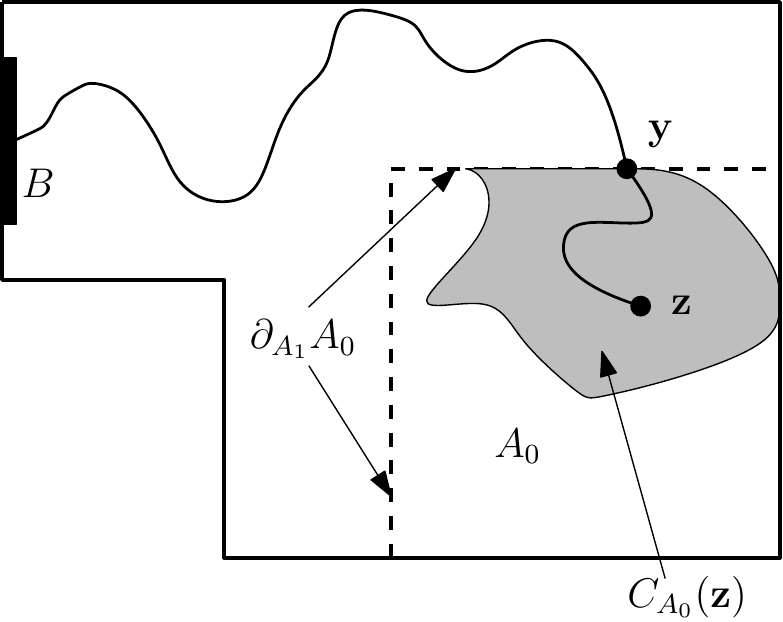}\hfill
(b)\includegraphics[width=7.7cm, height=6cm]{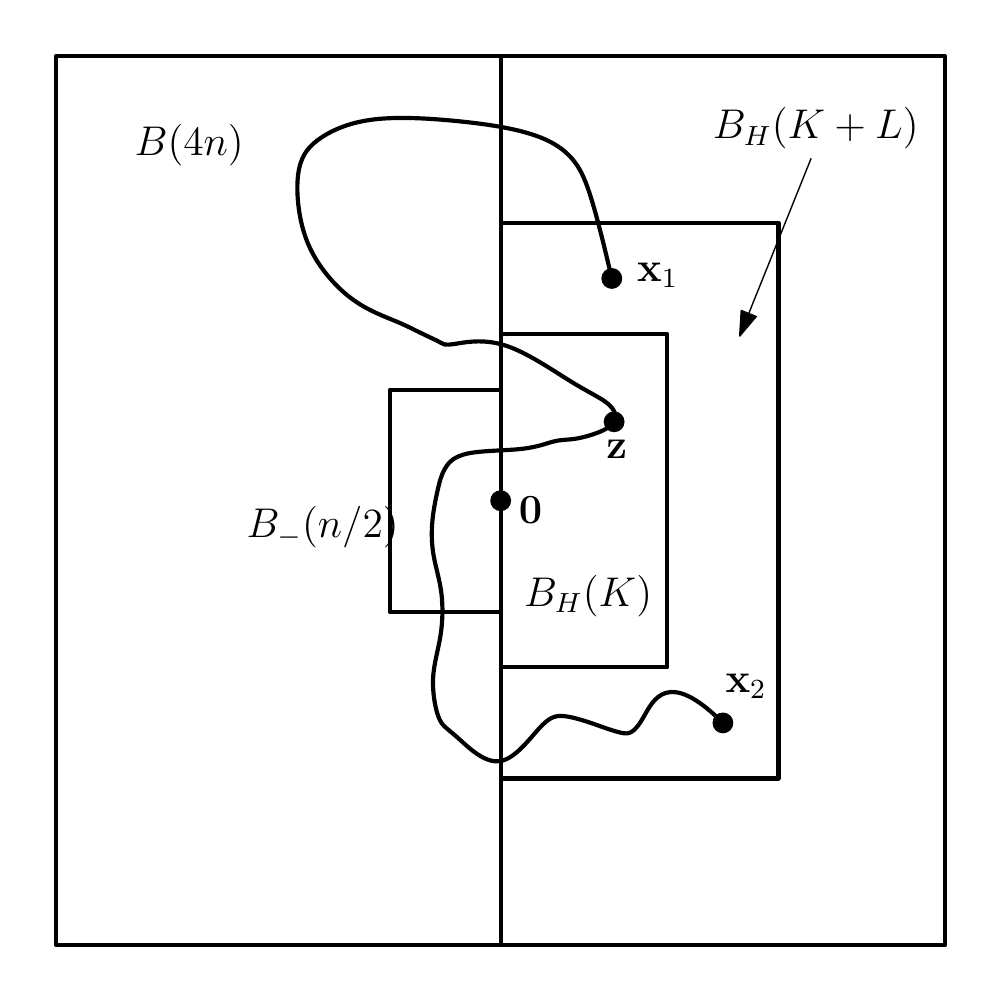}
\caption{Depictions of definitions appearing in Lemma \ref{lem:nofurther} and \eqref{eq:aoutdef} below. (a) This is an instance of the event $\{\vy \lra B\}\circ\{\vy\sa{A_0} \vz\}$ for $\vy\in\del_{A_1}A_0$ as illustrated. (b) All edges are closed except the two paths drawn above, and $\vx_1\in A_\vz^{out}, \vx_2\not\in A_\vz^{out}$.}
\label{Fig: lemma6 and Eqn19}
\end{figure}
\chJH{The following lemma will be useful for showing that $X_{\cC}$ is typically large (and thereby proving the existence of many points with half-space arms). We state it in a general form so that later in the paper it can also be applied to the case of, for instance, nested half-space boxes.}
\begin{lem}
	\label{lem:nofurther}
	\chJH{Let $A_0 \subseteq A_1 \subseteq \Zd$ be arbitrary finite vertex sets with $\vz \in A_0$. Let $B \subseteq \partial A_1$ be a distinguished portion of the boundary of $A_1$, and suppose that the $\ell^\infty$ distance from $A_0$ to $B$ is $\lambda$. Recall the defintion of $\partial_{A_1} A_0$ from \eqref{eq:genbdyv}. Then for all $M > 0$, we have
	\[\prob\left(\vz \stackrel{A_1}{\longleftrightarrow} B \mid C_{A_0}(\vz) \right) \leq  M \pi(\lambda)\]
	almost surely, on the event $\{ \# \{ \vy \in \partial_{A_1} A_0: \, \vz \sa{A_0} \vy \}  = M \}$ (which is measurable with respect to $C_{A_0}(\vz)$).} 
\end{lem}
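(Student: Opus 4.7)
My plan is to decompose any open path realizing $\vz \sa{A_1} B$ at its \emph{last} visit to the cluster $\widetilde{C} := C_{A_0}(\vz)$, then apply a union bound that exploits the independence between the edges already explored in revealing $\widetilde{C}$ and the remaining (``unexplored'') edges.

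The geometric core is as follows. Suppose $\vz \sa{A_1} B$ and let $P = (\vz_0, \ldots, \vz_k)$ be a witnessing open path in $A_1$ with $\vz_0 = \vz$ and $\vz_k \in B$. Take $j$ maximal with $\vz_j \in \widetilde{C}$ and set $\vy := \vz_j$; this is well-defined since $\vz_0 = \vz \in \widetilde{C}$ while $\vz_k \notin A_0 \supseteq \widetilde{C}$ (as $\lambda \geq 1$). Then $\vz_{j+1}$ cannot lie in $A_0 \setminus \widetilde{C}$, because otherwise the open edge $\{\vy, \vz_{j+1}\}$ would be an open edge in $A_0$ from $\widetilde{C}$ to $A_0 \setminus \widetilde{C}$, forcing $\vz_{j+1} \in \widetilde{C}$ and contradicting the maximality of $j$. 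Hence $\vz_{j+1} \in A_1 \setminus A_0$, which places $\vy$ in $Y := \partial_{A_1} A_0 \cap \widetilde{C}$. All later vertices of $P$ lie outside $\widetilde{C}$ by choice of $j$, so every edge of $P$ from $\vy$ onwards is incident to a vertex outside $\widetilde{C}$; such an edge either has an endpoint in $\Zd \setminus A_0$ or has both endpoints in $A_0 \setminus \widetilde{C}$, and so was never touched by the exploration of $\widetilde{C}$. This yields the set inclusion
$$\{\vz \sa{A_1} B\} \;\subseteq\; \bigcup_{\vy \in Y}\left\{\vy \text{ is connected to } B \text{ via an open path of unexplored edges}\right\}.$$

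Conditional on the exploration $\sigma$-algebra $\mathcal{F}$ of $\widetilde{C}$---which contains $\sigma(\widetilde{C})$---the unexplored edges remain i.i.d.\ Bernoulli$(p_c)$ and independent of $\mathcal{F}$. Each summand on the right-hand side above therefore has conditional probability equal to its unconditional value, which in turn is bounded by $\prob(\vy \leftrightarrow B) \leq \pi(\lambda)$ using translation invariance together with the hypothesis $\mathrm{dist}_\infty(\vy, B) \geq \lambda$. A union bound over the $M$ vertices of $Y$ (on the stated event) yields $\prob(\vz \sa{A_1} B \mid \mathcal{F}) \leq M \pi(\lambda)$ almost surely, and the tower property then delivers the desired bound on $\prob(\,\cdot\, \mid C_{A_0}(\vz))$, since $|Y|$ is $\widetilde{C}$-measurable.

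The only delicate point---and the main obstacle of the proof---is the correct choice of exit vertex $\vy$: cutting $P$ at the last visit to the \emph{cluster} $\widetilde{C}$ rather than at the first or last visit to $A_0$ is precisely what simultaneously forces $\vy \in Y$ (via the maximality argument ruling out passage into $A_0 \setminus \widetilde{C}$) and prevents the remainder of $P$ from re-entering $\widetilde{C}$, keeping the remainder entirely inside the unexplored edge set where the independence argument applies. Everything else is routine union-bound and independence bookkeeping.
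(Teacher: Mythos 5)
Your proof is correct and takes essentially the same approach as the paper's: both decompose the connection event at the last vertex of $C_{A_0}(\vz)$ on a witnessing path, argue that this vertex lies in $\partial_{A_1} A_0$, note that the remainder of the path avoids $C_{A_0}(\vz)$ (hence uses only edges not determined by the conditioning), and finish with a union bound over at most $M$ such vertices, each costing at most $\pi(\lambda)$. The paper phrases the independence via fixing a realization $\cC$ of $C_{A_0}(\vz)$ and observing that ``$\vy \lra B$ off $\cC$'' depends on edges disjoint from those determining $\{C_{A_0}(\vz) = \cC\}$, while you use the equivalent exploration-$\sigma$-algebra language; the two formulations are interchangeable.
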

\begin{proof}
	\chJH{For a vertex set $\cC$ of $A_0$, note that the event $\{C_{A_0}(\vz) = \cC\}$ depends only on the status of edges having either both endpoints in $\cC$ or one endpoint in $\cC$ and one endpoint in $A_0 \setminus \cC$. Conditional on $\{C_{A_0}(\vz) = \cC\}$, if   $\{\vz \sa{A_1} B\}$ occurs, then there must be some $\vy \in \cC \cap \partial_{A_1} A_0$ (see Figure \ref{Fig: lemma6 and Eqn19}(a) for a sketch) such that $\vy \lra B$ off $\cC$. 	That is, $\vy$ has an open path (in $\Zd$) to $B$ which touches $\cC$ only at $\vy$. We thus have the inclusion
	\begin{equation}\label{eq:Jeventdecomp2}
	\{\vz \sa{A_1} B,\, C_{A_0}(\vz) = \cC\} \subseteq  \{ C_{A_0}(\vz) = \cC\} \cap \{ \exists \vy \in \cC \cap \partial_{A_1} A_0 \text{ with }  \vy \lra B \text{ off } \cC\}\ .  \end{equation}
	
 For any fixed $\cC$, the events on the right-hand side of \eqref{eq:Jeventdecomp2} are independent, and the probability that any $\vy \in \cC \cap \partial_{A_1} A_0$ has such a connection is clearly bounded above by $\pi(\lambda)$. Thus,
		\begin{align*}
		\prob\Big(\vz \stackrel{A_1}{\longleftrightarrow} &B , \, C_{A_0}(\vz) = \cC\Big)\\
		&= \prob\left(\vz \stackrel{A_1}{\longleftrightarrow} B \mid \, C_{A_0}(\vz) = \cC \right)  \prob\left( C_{A_0}(\vz) = \cC\right)\\
		&\leq \prob\left(C_{A_0}(\vz) = \cC\right) \prob\left(\text{there is a } \vy \in \cC \cap \partial_{A_1} A_0 \text{ as in } \eqref{eq:Jeventdecomp2} \right)\\
		&\leq M \pi(\lambda) \prob\left(C_{A_0}(\vz) = \cC\right)\ .
		\end{align*}}
\end{proof}

Our main technical work \chJH{in the remainder of this section} is to show the following.
\begin{lem}\label{lem:regcard}
	There exist $\eta_0>0$ and positive constants $c_1=c_1(\eta_0,d)$ and $c_2=c_2(\eta_0,d)$  such that, uniformly in $n$ and $\eta\le\eta_0$,
	\[\prob\left(\# \sS_\eta \geq  c_1\eta n^{d-6} \right) \geq c_2. \]
\end{lem}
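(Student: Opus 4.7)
The plan is a Paley-Zygmund (second-moment) argument applied to the random variable
\[ V := \#\{\vx \in Ann(3n, 5n) : \vx \in \cC \text{ for some } \cC \in \sS_\eta\}. \]
The regularity condition $\#(\cC \cap B(5n)) \leq \eta^{-1} n^4$ guarantees that each cluster in $\sS_\eta$ contributes at most $\eta^{-1} n^4$ vertices to $V$, giving $\#\sS_\eta \geq \eta V/n^4$. Thus it suffices to show $\prob(V \geq c n^{d-2}) \geq c'$ for positive constants $c, c'$ depending on $\eta$ and $d$.

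First, I will lower bound $\E V$. For $\vx$ in a slightly shrunken version of $Ann(3n, 5n)$ --- say $Ann(3n + \delta n, 5n - \delta n)$ for a small fixed $\delta$ --- I claim $\prob(\vx \in \cC \in \sS_\eta) \geq c n^{-2}$. The arm bound \eqref{eq:onearmprob} gives $\prob(\vx \lra B(n)) \asymp n^{-2}$, and on this event $\cC(\vx)$ is automatically an $Ann(n,3n)$-spanning cluster. Conditional on $\vx \lra B(n)$ I verify the three regularity conditions hold simultaneously with positive probability: (iii) by Markov applied to $\#(\cC(\vx) \cap B(5n))$, whose unconditional first moment is $O(n^2)$ by Lemma~\ref{lem:momentaiz} (yielding conditional mean $O(n^4)$ and hence a failure probability $O(\eta)$); (ii) by Paley-Zygmund applied to $\#(\cC(\vx) \cap Ann(3n,5n))$, whose conditional first and second moments (of orders $n^4$ and $n^8$) come from tree-graph estimates combined with Lemma~\ref{lem:momentaiz}; and (i) by Paley-Zygmund applied to $X_{\cC(\vx)}$, using tree-graph first- and second-moment bounds at the scale $\asymp n^2$. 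For $\eta$ chosen sufficiently small, a union bound gives a joint conditional probability $\geq c > 0$, so $\prob(\vx \in \cC \in \sS_\eta) \geq c n^{-2}$; summing over the $\Theta(n^d)$ vertices of the shrunken annulus produces $\E V \geq c n^{d-2}$.

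Next, I will upper bound $\E V^2$. Since $\vx \in \cC \in \sS_\eta$ implies $\vx \lra B(n)$,
\[ \E V^2 \leq \sum_{\vx,\vy \in Ann(3n,5n)} \prob(\vx \lra B(n),\, \vy \lra B(n)) \leq C n^{2d-4}, \]
where the final bound follows from a standard tree-graph decomposition of the joint connection event by the coalescence point of open paths from $\vx$ and $\vy$, together with \eqref{eq:twopt} and \eqref{eq:onearmprob}. Paley-Zygmund then yields $\prob(V \geq \tfrac{1}{2}\E V) \geq (\E V)^2/(4\E V^2) \geq c_2$, and the conversion $\#\sS_\eta \geq \eta V/n^4$ completes the proof.

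I expect the main obstacle to be verifying condition (i). The relevant probability $\prob(\vy \sa{B(2n)} B(n))$ appearing in $\E X_{\cC(\vx)}$ is precisely the half-space-type arm probability that Proposition~\ref{hslbarm} aims to bound, so a naive FKG or BK split would be circular. The required lower bound on $\E[X_{\cC(\vx)} \indi_{\vx \lra B(n)}]$ must therefore come from estimating the joint event $\{\vy \lra \vx,\ \vy \sa{B(2n)} B(n)\}$ directly --- for example by conditioning on the $B(2n)$-restricted cluster of $B(n)$ and using tree-graph bounds to control the expected number of its $\partial B(2n)$-vertices that are globally connected to $\vx$, together with a matching second-moment estimate to close the Paley-Zygmund step.
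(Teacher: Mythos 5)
Your high-level plan --- Paley--Zygmund applied to a count of vertices sitting in $\sS_\eta$-clusters, then the cardinality conversion $\#\sS_\eta \geq \eta V / n^4$ --- is indeed the paper's strategy, and the conversion step is correct. However, you locate the index $\vx$ of your sum in $Ann(3n,5n)$, whereas the paper's Lemma \ref{OneCluster} establishes the per-vertex estimate $\prob(C(\vx)\in\sS_\eta)\geq c n^{-2}$ for $\vx\in B(n/2)$, using $Ann(3n,5n)$ only for the conversion step. This is not a cosmetic difference, and it produces two genuine gaps.

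First, for condition (ii) you assert the conditional first moment of $\#(C(\vx)\cap Ann(3n,5n))$ given $\{\vx\lra B(n)\}$ is of order $n^4$, attributing it to ``tree-graph estimates combined with Lemma \ref{lem:momentaiz}.'' For $\vx\in B(n/2)$ this is immediate because $\{\vx\lra\vy\}\subseteq\{\vx\lra\partial B(3n)\}$ for every $\vy\in Ann(3n,5n)$, so the conditional expectation is simply $\sum_\vy \tau(\vx,\vy)/\prob(\vx\lra\partial B(3n))\asymp n^2/n^{-2}=n^4$. For $\vx\in Ann(3n,5n)$ that inclusion fails and the step becomes a joint lower bound on $\prob(\vx\lra\vy,\ \vx\lra B(n))$. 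Tree-graph gives only upper bounds, and FKG gives only $\prob(\vx\lra\vy)\pi(n)$, which, after summing over $\vy$ and dividing by $\pi(n)$, yields conditional mean of order $n^2$, not $n^4$. You have not supplied an argument to close this factor-$n^2$ gap, and one is not standard.

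Second, you correctly flag condition (i) as the hard point, but the repair you propose --- a matching lower-bound second-moment argument on $X_{C(\vx)}$ --- is not what the paper does and would indeed be delicate. The paper does not lower-bound $X_{C(\vx)}$ at all; it upper-bounds the complementary bad event via Lemma \ref{lem:nofurther}: conditional on $C_{B(2n)}(\vx)$ having $M$ vertices on $\partial B(2n)$, the probability of reaching $\partial B(3n)$ is at most $M\pi(n)$, hence $\prob(X_{C(\vx)}<\eta n^2,\ \vx\lra\partial B(3n))\leq C\eta n^{-2}$. This is the key extra idea, and it again requires $\vx\in B(2n)$ so that $C_{B(2n)}(\vx)$ makes sense and its boundary count coincides with $X_{C(\vx)}$; it does not transfer directly to $\vx\in Ann(3n,5n)$. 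If you move your base points into $B(n/2)$ and import Lemma \ref{lem:nofurther} for step (i), the rest of your Paley--Zygmund framework (with $V$ now indexed by $\vx\in B(n/2)$) goes through and matches the paper.
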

We first assume the truth of Lemma \ref{lem:regcard} and use it to prove Proposition \ref{hslbarm}.

\begin{proof}[Proof of Proposition \ref{hslbarm}]
	Let $\eta_0, c_1=c_1(\eta_0,d)$ and $c_2=c_2(\eta_0,d)$ be the constants from Lemma \ref{lem:regcard}.
	First note that if $\cC , \cC'\in\br{Ann(n,3n)}$ are not the same, then the vertices counted in the definition of $X_{\cC}$ and $X_{\cC'}$ are disjoint. So \chJH{for} any $\eta>0$,
	\begin{equation}
	\label{eq:xlbd}
	\#\{\vx\in\del B(2n): \vx\sa{B(2n)} B(n)\} \ge \sum_{\cC \in \sS_\eta} X_{\cC},
	\end{equation}
	and hence  $\#\{\vx\in\del B(2n): \vx\sa{B(2n)} B(n)\} \ge c_1\eta^2 n^{d-4}$ on the event $\{\#\sS_\eta \ge c_1\eta n^{d-6}\}$. In view of Lemma \ref{lem:regcard}, the above event has probability $\ge c_2$ for all $\eta\le \eta_0$. Therefore, for such an $\eta$,
	\beqax  
	\E\#\{\vx\in\del B(2n): \vx\sa{B(2n)} B(n)\} 
	&\ge&  \E\left[\#\{\vx\in\del B(2n): \vx\sa{B(2n)} B(n)\} \mathbf 1_{\{\#\sS_\eta\ge c_1\eta n^{d-6}\}}\right] \\
	&\ge& c_1\eta^2n^{d-4} \prob(\#\sS_\eta\ge c_1\eta n^{d-6}) 
	\ge c_1c_2\eta^2 n^{d-4}.\eeqax
	
	On the other hand, if a vertex $\vx\in\del B(2n)$ satisfies $\vx\sa{B(2n)} B(n)$, then $\vx$ must have a half-space arm to distance $n$ (in fact, a half-space arm ``directed in the $\ve_1$-direction'' --- see \eqref{eq:armtofarside} and the surrounding discussion). So an upper bound for the expectation appearing in the last display is given by
	\begin{equation}
	\label{eq:xubd}
	\#\del B(2n) \cdot \pi_H(n) \leq C_1 n^{d-1} \pi_{H}(n)
	\end{equation}
	for some constant $C_1=C_1(d)$.
	Comparing \eqref{eq:xlbd} to \eqref{eq:xubd} gives $\pi_H(n) \ge (c_1c_2\eta^2/C_1) n^{-3}$, which completes the proof of the proposition.
\end{proof}

We note that a slight extension of the above argument shows something stronger than Proposition \ref{hslbarm}. \chJH{Namely, recalling that $Rect(n) = [0, n] \times [-4n, 4n]^{d-1}$, } there exists a uniform $c > 0$ such that
	\begin{equation}\label{eq:armtofarside}
	\chJH{\prob\left( 0 \stackrel{Rect(n)}{\longleftrightarrow} S(n), \, \#C_{Rect(n)}(0) \cap S(n) > c n^2  \right)} \geq c n^{-3}\ ,
	\end{equation} 
	which shows that the probability of $0$ having a half-space arm directed in the $\ve_1$-direction has the same order as the probability of an undirected half-space arm. We will make use of the strengthened result \eqref{eq:armtofarside} later in the paper (see \eqref{eq:hsxsuff}).

To complete the proof of Proposition \ref{hslbarm}, it suffices to prove Lemma \ref{lem:regcard}. The key fact that we need to prove Lemma \ref{lem:regcard}  is the following.   Recall that for $\vx\in\Zd$, $C(\vx)$ is the open cluster containing $\vx$.
\begin{lem} \label{OneCluster}
	There exists  $\eta_0>0$ and $\cons=\cons(\eta_0,d)>0$ such that for all $\eta\le\eta_0$ and $\vx\in B(n/2)$, $\prob(C(\vx)\in\sS_\eta) \geq \cons n^{-2}$. 
\end{lem}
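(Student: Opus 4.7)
The plan is to decompose $\{C(\vx)\in\sS_\eta\}$ as $E_1 \cap E_2 \cap E_3 \cap E_4$, where $E_1=\{\vx \lra \del B(3n)\}$ (which forces $C(\vx) \in \br{Ann(n,3n)}$ since $\vx\in B(n/2) \subset B(n)$), $E_2 = \{X_{C(\vx)}\ge\eta n^2\}$, $E_3 = \{\#(C(\vx)\cap Ann(3n,5n))\ge\eta n^4\}$, and $E_4=\{\#(C(\vx)\cap B(5n))\le\eta^{-1}n^4\}$. The baseline $\prob(E_1)\asymp n^{-2}$ is immediate from \eqref{eq:onearmprob}. I will show that on $E_1$, each regularity condition fails with probability at most a small multiple of $n^{-2}$, so that for $\eta$ small all four events hold simultaneously with probability $\asymp n^{-2}$.

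The main obstacle will be $E_2$. I plan to apply Lemma \ref{lem:nofurther} with $A_0=B(2n)$, with $A_1$ there equal to $B(3n)$, and $B = \del B(3n)$, giving $\lambda = n$. Setting $M := \#(\del B(2n) \cap C_{B(2n)}(\vx))$, I observe $M \le X_{C(\vx)}$, because each such boundary vertex belongs to $C(\vx)$ and is connected to $\vx \in B(n)$ by an open path inside $B(2n)$. The lemma gives $\prob(E_1\mid C_{B(2n)}(\vx)) \le M\pi(n)$ a.s.; averaging and using the bounds $\pi(n), \prob(\vx \lra \del B(2n)) \le Cn^{-2}$ from \eqref{eq:onearmprob} (valid because $\vx \in B(n/2)$ is at distance $\ge 3n/2$ from $\del B(2n)$) yields
\[
\prob(E_1\cap \{M < \eta n^2\}) \le \E\bigl[M\pi(n)\mathbf{1}_{\{M<\eta n^2\}}\bigr] \le \eta n^2 \cdot \pi(n) \cdot \prob(M\ge 1) \le C\eta n^{-2}.
\]
Conceptually, if $C_{B(2n)}(\vx)$ typically hit $\del B(2n)$ only $o(n^2)$ times, the conditional extension probability $M\pi(n)$ would be too small to match the unconditional $\prob(E_1)\asymp n^{-2}$. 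The bound $\prob(E_4^c) \le C\eta n^{-2}$ follows immediately from Markov and the first-moment bound $\E[\#C(\vx) \cap B(5n)] \asymp n^2$ of Lemma \ref{lem:momentaiz} (via translation invariance, since $B(5n)-\vx \subset B(11n/2)$). Combining, $\prob(E_1 \setminus (E_2 \cap E_4)) \le C\eta n^{-2}$.

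For $E_3$, I will run a Cauchy--Schwarz argument. Let $N := \#(C(\vx)\cap Ann(3n,5n))$; summing \eqref{eq:twopt} gives $\E N \asymp n^2$, and Lemma \ref{lem:momentaiz} (together with $Ann(3n,5n) \subset B(5n)$ and translation invariance) gives $\E N^2 \le C n^6$. Since $\{N \ge 1\} \subseteq E_1$, $\E[N \mathbf{1}_{E_1}] = \E N$, and Cauchy--Schwarz combined with the step above gives
\[
\E[N \mathbf{1}_{E_1 \setminus (E_2 \cap E_4)}] \le \bigl(\E N^2 \cdot \prob(E_1 \setminus (E_2 \cap E_4))\bigr)^{1/2} \le C\sqrt{\eta}\, n^2,
\]
so $\E[N \mathbf{1}_{E_1 \cap E_2 \cap E_4}] \ge cn^2/2$ for $\eta$ small. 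Splitting by $\{N \ge \eta n^4\}$ and bounding the low-$N$ contribution by $\eta n^4 \cdot \prob(E_1) \le C\eta n^2$ gives $\E[N\mathbf{1}_{E_1\cap E_2\cap E_3\cap E_4}] \ge cn^2$ for $\eta$ small. A final Cauchy--Schwarz against $\E N^2 \le Cn^6$ then yields $\prob(C(\vx) \in \sS_\eta) \ge (cn^2)^2/(Cn^6) = c'n^{-2}$, as required.
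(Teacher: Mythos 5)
Your proof is correct and uses exactly the same three ingredients as the paper's: Lemma \ref{lem:nofurther} for the condition $X_{C(\vx)} \geq \eta n^2$, Markov's inequality with Lemma \ref{lem:momentaiz} for $\#C(\vx)\cap B(5n) \leq \eta^{-1}n^4$, and a second-moment argument for $\#C(\vx)\cap Ann(3n,5n) \geq \eta n^4$. The only difference is organizational: the paper runs a conditional Paley--Zygmund argument on $\{\vx \lra \partial B(3n)\}$ and then subtracts the other two failure probabilities, while you establish those failure bounds first and fold them into a Cauchy--Schwarz chain using the weight $N = \#\left(C(\vx)\cap Ann(3n,5n)\right)$ --- the same estimates, differently packaged.
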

First, we show how to use Lemma \ref{OneCluster} to prove Lemma \ref{lem:regcard}; we then prove Lemma \ref{OneCluster}. 
\begin{proof}[Proof of Lemma \ref{lem:regcard}]
	We apply  Lemma \ref{OneCluster} to obtain $\eta_0, \cons(\eta_0,d)$ such that
	\begin{equation}
	\label{eq:assumept}
	\prob\left( C(\vx) \in \sS_\eta \right) \geq \cons n^{-2} \text{  uniformly in $n, \eta\le\eta_0$ and $\vx \in B(n/2)$}\ .
	\end{equation}
	Now we will use a  second-moment argument for $\#\sS_\eta$.  First note that 
	\[ \sum_{\vx\in B(n/2)}\mathbf 1_{\{C(\vx)\in\sS_\eta\}} \le\sum_{\cC\in\sS_\eta} \#\cC \cap B(n/2) \le \eta^{-1}n^4 \#\sS_\eta.\]
	The last inequality follows from the fact that $\#\cC\cap B(5n) \le \eta^{-1}n^4$ for all $\cC\in\sS_\eta$. From the last display and \eqref{eq:assumept},
	\beq \label{eq:meambd} \E\#\sS_\eta \ge \frac{\eta}{n^4} \sum_{\vx\in B(n/2)} \prob(C(\vx) \in\sS_\eta) \ge \frac{\eta}{n^4}  C_2 n^d \cons n^{-2} = C_2 \cons\eta n^{d-6}\eeq
	for some constant $C_2=C_2(d)$.
	Now we estimate the second moment of $\#\sS_\eta$. Note that
	\[ \sum_{\vx\in Ann(3n,5n)}\mathbf 1_{\{C(\vx)\in\sS_\eta\}} \ge\sum_{\cC\in\sS_\eta} \#\cC \cap Ann(3n,5n) \ge \eta n^4 \#\sS_\eta.\]
	The last inequality follows from the fact that $\#\cC\cap Ann(3n,5n) \ge \eta n^4$ for all $\cC\in\sS_\eta$. 
	Thus,
	\[ \# \sS_\eta \le \frac{1}{\eta n^{4}} \sum_{\vx\in Ann(3n,5n)} \mathbf 1_{\{C(\vx)\in\sS_\eta\}}, 
	\text{ and so } (\#\sS_\eta)^2  \le \frac{1}{\eta^2 n^{8}} \sum_{\vx,\vy\in 	Ann(3n,5n)} \mathbf 1_{\{C(\vx), C(\vy)\in\sS_\eta\}}. \\
	\]
	For each of the above summands  there are two possibilities based on whether $C(\vx)$ and $C(\vy)$ intersect or not. If $C(\vx), C(\vy) \in \sS_\eta$ and $C(\vx)\cap C(\vy)=\chJH{\varnothing}$, then $\vx$ and $\vy$ are connected to $B(n)$ using disjoint paths, so using the BK inequality
	\beqax 
	\prob(C(\vx), C(\vy)\in \sS_\eta,\, C(\vx)\cap C(\vy)=\chJH{\varnothing}) &\le& \prob(\{\vx\leftrightarrow B(n)\} \circ \{\vy\leftrightarrow B(n)\})  \\
	&\le& \prob(\vx\leftrightarrow B(n)) \prob(\vy\leftrightarrow B(n)).
	\eeqax
	On the other hand, note that for any $\vx\in Ann(3n,5n)$,
	\beqax
	&& \sum_{\vy\in Ann(3n,5n)} \mathbf 1_{\{C(\vx),C(\vy)\in\sS_\eta,\, C(\vx)\cap C(\vy) \ne\varnothing\}}\\
	&\le&  \sum_{\vy\in C(\vx)\cap B(5n)} \mathbf 1_{\{C(\vx)\in\sS_\eta\}} = \#\left(C(\vx)\cap B(5n) \right) \mathbf 1_{\{C(\vx)\in\sS_\eta\}} \le \eta^{-1}n^4 \mathbf 1_{\{C(\vx)\in\sS_\eta\}}
	\eeqax
	by the definition of $\sS_\eta$. Combining the last three displays,
	\beqax 
	\E[(\#\sS_\eta)^2] 
	&\le& \frac{1}{\eta^{2}n^{8}}\left[\sum_{\vx\in Ann(3n,5n)} \eta^{-1}n^4\prob(C(\vx) \in\sS_\eta) 
	+ \sum_{\vx,\vy\in Ann(3n,5n)} \prob(\vx\leftrightarrow B(n))\prob(\vy\leftrightarrow B(n))\right]. 
	\eeqax
	Using \eqref{eq:onearmprob}, we have $\prob(\vx\leftrightarrow B(n)) \le \Cr{c:armhigh} n^{-2}$  uniformly in $\vx\in Ann(3n,5n)$. Since $\#B(5n) = (5n + 1)^d$, the two terms in the right-hand side of the above display are at most $C\eta^{-3} n^{d-6}$ and $C\eta^{-2} n^{2d-12}$ respectively. Therefore, there is a constant $C_3>0$ such that 
	\[ \E[(\#\sS_\eta)^2] \leq C_3\eta^{-3}n^{2d-12}.\]
	Using the estimates in the above display and \eqref{eq:meambd}, and applying \chJH{the Paley-Zygmund inequality},
	\beqax
	\prob(\#\sS_\eta\ge \frac 12C_2\eta\cons n^{d-6})
	&\ge& \prob(\#\sS_\eta\ge\frac{1}{2}\E\#\sS_\eta)\\
	&\ge& \frac{1}{4} \frac{(\E\#\sS_\eta)^2}{\E[(\#\sS_\eta)^2]} \\
	&\ge& \frac{1}{4} \frac{C_2^2\eta^2\cons^2 n^{2d-12}}{C_3\eta^{-3}n^{2d-12}}
	=  \frac{C_2^2\cons^2}{4C_3}\eta^5.
	\eeqax
	While the above bound depends on $\eta$, we can replace it by a constant for $\eta \leq \eta_0$ since the probability appearing in the statement of Lemma \ref{lem:regcard} is decreasing in $\eta$. This completes the proof of the Lemma.
\end{proof}
Lastly, we need to show Lemma \ref{OneCluster}. The lemma follows from moment estimates and Lemma \ref{lem:nofurther}, which says that clusters of boxes with a small number of boundary vertices are likely to die out.
\begin{proof}[Proof of Lemma \ref{OneCluster}]
	Since $\#\sS_\eta$ is monotone in $\eta$,
	it suffices to show that there is a $\eta_0>0$ and $\cons=\cons(\eta_0,d)>0$ such that $\prob(C(\vx)\in\sS_{\eta_0}) \ge \cons n^{-2}$ for all $\vx\in B(n/2)$. The proof consists of the following steps:
	\begin{enumerate}
		\item[Step 1.]  There are  positive  constants $C_1, C_2$ depending on $d$ such that for any $\vx\in B(n/2)$,
		\beq \label{reg est 1}
		\prob\left(\# C(\vx)\cap Ann(3n,5n) > \eta n^4\right) \ge (C_1-C_2\eta^2)n^{-2}\ . \eeq
		\item[Step 2.] There are positive constants $C_1, C_2, C_3$ depending on $d$ such that for any $\vx\in B(n/2)$,
		\beq \label{reg est 2}
		\prob\left(\#C(\vx)\cap Ann(3n,5n) > \eta n^4, \#C(\vx)\cap B(5n) \le \eta^{-1} n^4\right) \ge (C_1 -C_2\eta^2 -C_3\eta) n^{-2}.\eeq
		\item[Step 3.]  There are positive constants $C_1, C_2, C_4$ depending on $d$ such that for any $\vx\in B(n/2)$, 
		\beq \label{reg est 3}
		\prob(C(\vx)\in \sS_\eta) \ge  (C_1 -C_2\eta^2 -C_4\eta) n^{-2}.\eeq
	\end{enumerate} 
	The proof of the lemma follows from Step 3  by taking $\cons(\eta,d):=C_1 -C_2\eta^2 -C_4\eta$ and choosing $\eta_0>0$ small enough so that $\cons(\eta_0,d)>0$. Now we give the proof of the three steps. 
	
	\noindent
	{\bf Step 1}. We will use a second moment argument for the distribution of $\#C(\vx)\cap Ann(3n,5n)$ given $\{\vx\lra \del B(3n)\}$. First note that
	\beqax 
	\E\left(\left. \#C(\vx)\cap Ann(3n,5n)\right| \vx\lra \del B(3n)\right)
	&=& \sum_{\vy\in Ann(3n,5n)} \prob \left(\left. \vx\lra \vy \right| \vx\lra \del B(3n)\right) \\
	&=& \sum_{\vy\in Ann(3n,5n)} \frac{\prob(\vx\lra \vy)}{\prob\left(\vx\lra \del B(3n)\right)},
	\eeqax 
	as $\vx\lra \vy$ implies $\vx\lra\del B(3n)$ for all $\vy\in Ann(3n,5n)$. Equation \eqref{eq:onearmprob} and the symmetries of the lattice give that $\prob(\vx\lra\del B(3n)) \asymp n^{-2}$. This, together with the two point function estimate \eqref{eq:twopt}, gives
	\beq \label{1stmomentbd} 
	\E\left(\left. \#C(\vx)\cap Ann(3n,5n)\right| \vx\lra \del B(3n)\right)
	\ge c_1 n^2 \sum_{\vy\in Ann(3n,5n)} ||\vx-\vy||^{2-d} \ge c_2 n^4\eeq
	for some constants $c_1, c_2$ which depend only on $d$. Next note that
	\beqax 
	\E\left(\left. \left(\#C(\vx)\cap Ann(3n,5n)\right)^2\right| \vx\lra \del B(3n)\right)
	&=& \sum_{\vy, \vz\in Ann(3n,5n)} \prob \left(\left. \vx\lra \vy, \vx\lra \vz \right| \vx\lra \del B(3n)\right) \\
	&=& \sum_{\vy\in Ann(3n,5n)} \frac{\prob( \vx\lra \vy, \vx\lra \vz)}{\prob\left(\vx\lra \del B(3n)\right)},
	\eeqax 
	 as $\vx\lra \vy, \vz$ implies $\vx\lra\del B(3n)$ for all $\vy, \vz\in Ann(3n,5n)$. Now, $\sum_{\vy,\vz \in Ann(3n, 5n)} \prob(\vx \lra \vy,\,\vz)$ is upper-bounded by $\E[(\#C(\vx) \cap [\vx +B(6n)] )^2 ]$, which is at most $c_4 n^6$ for some constant $c_4 > 0$ by Lemma \ref{lem:momentaiz}.  
	
	Combining this estimate with the fact that $\prob(\vx\lra\del B(3n)) \asymp n^{-2}$,
	\beq \label{2ndmomentbd}  
	\E\left(\left. \left(\#C(\vx)\cap Ann(3n,5n)\right)^2\right| \vx\lra \del B(3n)\right) \le c_5 n^8
	\eeq
	for a constant $c_5$ that depends only on $d$. Using the inequalities in \eqref{1stmomentbd} and \eqref{2ndmomentbd}, 
	 applying the Paley-Zygmund inequality, we find
	\beqax 
	&& \prob\left(\left.\# C(\vx)\cap Ann(3n,5n) > \eta n^4\right| \vx\lra \del B(3n)\right) \\
	&\ge& \prob\left(\left.\# C(\vx)\cap Ann(3n,5n) > (\eta/c_2) \E \# C(\vx)\cap Ann(3n,5n)\,\right| \vx\lra \del B(3n)\right) \\
	&\ge& (1-\eta^2/c_2^2) \frac{\left(\E\left[\left.\# C(\vx)\cap Ann(3n,5n) \right| \vx\lra \del B(3n)\right]\right)^2}
	{\E\left(\left[\left.\# C(\vx)\cap Ann(3n,5n) \right| \vx\lra \del B(3n)\right]^2\right)} 
	\ge  (1-\eta^2/c_2^2) c_2^2/c_5.
	\eeqax
	The above estimate together with the fact that $\prob(\vx\lra \del B(3n)) \asymp n^{-2}$ gives \eqref{reg est 1}.
	
	\noindent
	{\bf Step 2}. Combining the first moment bound of Lemma \ref{lem:momentaiz} with the Markov inequality gives  
	\[  \prob\left(\#C(\vx)\cap B(5n) > \eta^{-1} n^4\right) \le c_9\eta n^{-2}.\]
	Using this with the estimate in \eqref{reg est 1}, we get \eqref{reg est 2}.
	
	\noindent
	{\bf Step 3}. It follows from Lemma \ref{lem:nofurther} and the fact that $\prob(0 < X_{C(\vx)}) \leq c n^{-2}$, that
		\[\chJH{\prob(X_{C(\vx)} <\eta n^2, \vx \lra \del B(3n)) \leq \prob(0 < X_{C(\vx)} < \eta n^2) \left[C \eta n^{2} n^{-2}\right] \leq  c_{10} \eta n^{-2}} \]
		uniformly for $\vx \in B(n/2)$.
	Combining the above estimate with \eqref{reg est 2}, we see
	\beqax 
	&& \prob(C(\vx)\in\sS_\eta) \\
	&=& \prob(\# C(\vx)\cap Ann(3n,5n) > \eta n^4, \# C(\vx) \cap B(5n) \le \eta^{-1} n^4)  \\
	&& - \prob(\# C(\vx)\cap Ann(3n,5n) > \eta n^4, \# C(\vx) \cap B(5n) \le \eta^{-1} n^4, X_{C(\vx)}<\eta n^{2}) \\
	&\ge& \prob(\# C(\vx)\cap Ann(3n,5n) > \eta n^4, \# C(\vx) \cap B(5n) \le \eta^{-1} n^4) 
	-  \prob(X_{C(\vx)}<\eta n^{2}, \vx\lra \del B(3n)) \\
	&\ge& (C_1-C_2\eta^2-C_3\eta-c_{10}\eta)n^{-2}=:\cons(\eta)n^{-2}.
	\eeqax
	This shows \eqref{reg est 3}.
\end{proof}

\section{Extending large clusters}\label{sec:extend}

\chJH{In this section, we state several results on regularity and extensibility of clusters. One family of results guarantees that, conditional on a typical realization of (for instance) $C_{B_H(n)}(0)$, the cluster $C(0)$ is not often large.  Another guarantees that when (for instance) $C_{B_H(n)}(0)$ contains enough vertices of $S'(n)$, the cluster $C_{\Zd_+}(0)$ often contains a large number of vertices $\vx$ with $\|\vx\| \approx 2n$. In fact, we will not claim a statement as strong as this in the current section, since it is difficult to rule out the case that $C_{B_H(n)}(0) \cap S'(n)$ is localized near the half-space boundary $S(0)$--- this necessitates working with annular regions. We note that once we show $\pi_H(n) \leq C n^{-3}$, the fact that $C_{B_H(n)}(0)$ does not typically localize near $S(0)$ on $\{0 \lra S'(n)\}$ follows from  \eqref{eq:armtofarside} above.}

In what follows, we will consider a vertex $\vz$ within some connected vertex set $D\subseteq\Zd$ and some subset $Q \subseteq \partial D$ of its boundary. $D$ is generally a box or annulus. We introduce the notation $X_Q(D,\vz)$ for the number of ``boundary vertices'' of $C_D(\vz)$ on $Q$:
\[
  \text{For $\vz\in D\subseteq \Rd$ and $Q\subseteq \del D$, let } X _Q(D,\vz) := \#\{\vx \in Q: \, \vx \sa{D} \vz \} = \#[C_D(\vz) \cap Q]\ .
\]

We first state a ``regularity'' theorem. It says roughly that, if $X_Q$ is large, the clusters of most of the vertices contributing to $X_Q$ are not larger than their typical size. For $s > 0$ and $\vx \in \Zd$ arbitrary, define the event
\[\cT_s(\vx):= \left\{\ \#\left(C(\vx) \cap (\vx + B(s))\right)\  < s^4 \log^7 s \right\}\ . \]
\begin{defin}\label{RegDef}
  Let $D\subseteq \Rd$ and $\vx \in \del D$. For $s > 0$, we say that $\vx$ is $s$-bad with respect to $D$ if
  \[\prob\left(\cT_s(\vx) \mid C_{D}(\vx) \right) \leq 1 - \exp(-\log^2 s)\ . \]
  We say that $\vx$ is $K$-irregular with respect to $D$ if $\vx \in \partial D$ and there is some $s \geq K$ such that $\vx$ is $s$-bad with respect to $D$. Otherwise, $\vx$ is said to be $K$-regular with respect to $D$. We denote the set of $K$-regular vertices of $D$ by $REG_D(K)$.
\end{defin}

We  define the ``irregular version'' of $X_Q(D,\vz)$, which counts the number of boundary vertices whose clusters are abnormally large:
\begin{equation}
\label{eq:defineirreg}
X^{K-irr}_Q(D,\vz) := \#\left\{\vx \in C_D(z) \cap Q:\, \vx \text{ is $K$-irregular with respect to } D\right\}\ . \end{equation}

\chJH{The following lemma provides a tail bound for $X_Q^{K-irr}$ when $X_Q$ is large, for a growing sequence of annuli or boxes $D$. Suppose that for each $n$, the set $D$ is a dilation of the same box or annulus --- that is, if $D$ is a translate of $\prod_{i=1}^d [\alpha_i n, \beta_i n]$, or the annuli $Ann(cn ,n)$, $\Anns(cn, n)$, or $Ann_H(cn, n)$, where the $\alpha_i$'s, $\beta_i$'s, or $c$ are fixed. We say $Q$ is a dilated subrectangle of $\partial D$ for each $n$ if $Q$ is a $(d-1)$-dimensional rectangle in $\partial D$ with nondegenerate sides and if, for each $n$, $Q$ is dilated and translated as $D$ is --- i.e., as $n$ increases, $Q$ changes by the same dilations / translations as $D$. }

\begin{lem}[Cluster regularity]
\label{thm:regthm}
\chJH{Consider a sequence of growing (in $n$) domains $D$ which are dilations / translations of the same box or annulus having sidelength order $n$ as in the above paragraph. Suppose that $Q$ is a dilated subrectangle of $\partial D$, also as above. There exist constants $C > c > 0$  and $K_0>0$ such that for any $n, M$ and any $K\ge K_0$, the following holds. Uniformly in $\vz\in D$,
we have
  \[\prob\left(X _Q(D,\vz)\geq M \text{ and } X^{K-irr}_Q(D,\vz) \geq \frac 12 X_Q(D,\vz) \right) \leq C n^d \exp(- c \log^2 M)\ .\]}
\end{lem}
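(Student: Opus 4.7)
The target tail $\exp(-c\log^2 M)$ is the signature of Markov's inequality applied to a moment of order $k\sim\log M$; I would combine this with a scale-by-scale treatment of $s$-badness.

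\emph{Step 1.} Because both $\{\vx\text{ is $s$-bad}\}$ and $\{\vx\sa{D}\vz\}$ are measurable with respect to $C_D(\vx)$, the defining inequality for $s$-badness ($\prob(\cT_s(\vx)^c\mid C_D(\vx))\geq e^{-\log^2 s}$ on the bad event) integrates to
\[\prob(\vx\text{ is $s$-bad},\vx\sa{D}\vz)\leq e^{\log^2 s}\,\prob(\cT_s(\vx)^c,\vx\sa{D}\vz)\leq e^{\log^2 s}\,\prob(\cT_s(\vx)^c,\vx\lra\vz),\]
so irregularity is converted into the honest cluster-size event $\cT_s(\vx)^c$ at the cost of the factor $e^{\log^2 s}$.

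\emph{Step 2.} I would then bound $\prob(\cT_s(\vx)^c,\vx\lra\vz)$ sharply enough to absorb $e^{\log^2 s}$ after summing in $s$. Since Lemma \ref{lem:momentaiz} only yields $\prob(\cT_s(\vx)^c)\leq C/(s^2\log^{14}s)$ via second-moment Markov --- which is insufficient --- I would use a BK-Reimer decomposition through an ``exit vertex'' $\vw\in\del(\vx+B(s))$ of any open path from $\vx$ to $\vz$ (in the case $\|\vx-\vz\|>s$). The events $\cT_s(\vx)^c\cap\{\vx\sa{\vx+B(s)}\vw\}$ and $\{\vw\lra\vz\}$ occur disjointly, giving
\[\prob(\cT_s(\vx)^c,\vx\lra\vz)\leq\sum_{\vw\in\del(\vx+B(s))}\prob(\cT_s(\vx)^c,\vx\sa{\vx+B(s)}\vw)\,\tau(\vw,\vz),\]
with the local factor bounded by higher moments of $\#(C(\vx)\cap(\vx+B(s)))$ via the tree-graph inequality of Aizenman-Newman, yielding superpolynomial decay in $s$.

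\emph{Step 3.} Summing Step 1 over dyadic scales $s\geq K$ and over $\vx\in Q$ controls the first moment $\E X_Q^{K-irr}(D,\vz)$. For the dependence in $M$, I would compute higher moments
\[\E[(X_Q^{K-irr})^k]=\sum_{\vx_1,\ldots,\vx_k\in Q}\prob\!\Big(\bigcap_i\{\vx_i\text{ is $K$-irregular, }\vx_i\sa{D}\vz\}\Big),\]
bound the $k$-wise connectivity event by tree-graph methods (combined with the single-vertex estimates from Steps 1--2), and apply Markov's inequality at $k=\lceil c'\log M\rceil$ to obtain $\prob(X_Q^{K-irr}\geq M/2)\leq\exp(-c\log^2 M)$. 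The $n^d$ prefactor absorbs polynomial contributions from $\#Q\leq Cn^{d-1}$ and the dyadic sum over scales.

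\emph{Main obstacle.} The crux is proving a tail bound on $\#(C(\vx)\cap(\vx+B(s)))$ that dominates the $e^{\log^2 s}$ conversion factor and that can be coupled with $\{\vx\lra\vz\}$ via a two-point factor $\tau(\vx,\vz)$. A secondary subtlety is tracking the growth of the $k$-wise connection bounds carefully enough that Markov at $k\sim\log M$ produces exactly $\log^2 M$ in the exponent, rather than something weaker like $\exp(-c\log M\cdot\log\log M)$.
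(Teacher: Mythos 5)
The paper does not actually prove this lemma: it notes that the cube case $D=B(n)$, $Q=\partial B(n)$ is Theorem~4 of Kozma--Nachmias \cite{KN11} and says the general case ``follows by an argument similar to the proof of that result; we omit the details.'' So you are reconstructing the KN argument, not reproving something detailed in this paper.

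Your Step~1 conversion is correct as stated: since the event $\{\vx\text{ is $s$-bad}\}$ and the set $C_D(\vx)$ generate the same $\sigma$-algebra restriction, for any $C_D(\vx)$-measurable $A$ one gets $\prob(\vx\text{ $s$-bad},\,A)\leq e^{\log^2 s}\prob(\cT_s(\vx)^c,\,A)$. Step~2 is in the right spirit, but the claimed BK disjointness of $\cT_s(\vx)^c\cap\{\vx\sa{\vx+B(s)}\vw\}$ and $\{\vw\lra\vz\}$ is not quite right as written: $\cT_s(\vx)^c$ counts vertices of the \emph{unrestricted} cluster $C(\vx)$ inside $\vx+B(s)$, and such vertices can be reached by paths that leave $\vx+B(s)$ through $\vw$ and beyond, so a witness for $\cT_s(\vx)^c$ need not be edge-disjoint from a witness for $\{\vw\lra\vz\}$. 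This is fixable (pass to a restricted cluster, or decompose more carefully), but it is not automatic.

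The substantive gap is in Step~3. The events $\{\vx_1\text{ is $K$-irregular}\},\ldots,\{\vx_k\text{ is $K$-irregular}\}$ are \emph{all deterministic functions of the single random object $C_D(\vz)$}, so there is no extra randomness for a BK/tree-graph decoupling of $\prob(\cap_i\{\vx_i\text{ irreg},\,\vx_i\sa{D}\vz\})$ to exploit. Your Step~1 conversion is a per-vertex statement conditioned on $C_D(\vz)$; once you trade $\{\vx_1\text{ $s_1$-bad}\}$ for $\cT_{s_1}(\vx_1)^c$, the resulting event is no longer $C_D(\vz)$-measurable and the lemma cannot be reapplied to $\vx_2,\ldots,\vx_k$ one at a time. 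The missing ingredient is a \emph{joint} conversion via Harris--FKG: conditional on $C_D(\vz)$ the events $\cT_{s_i}(\vx_i)^c$ are all increasing in the remaining edges, so $\prob\bigl(\cap_i \cT_{s_i}(\vx_i)^c \mid C_D(\vz)\bigr)\geq\prod_i\prob\bigl(\cT_{s_i}(\vx_i)^c\mid C_D(\vz)\bigr)\geq e^{-\sum_i\log^2 s_i}$ on the event that each $\vx_i$ is $s_i$-bad, and integrating gives $\prob(\cap_i\{\vx_i\text{ $s_i$-bad},\,\vx_i\sa{D}\vz\})\leq e^{\sum_i\log^2 s_i}\prob(\cap_i\cT_{s_i}(\vx_i)^c,\,\vx_i\lra\vz\;\forall i)$. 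Only \emph{after} this FKG step is the event in a form where well-separated $\vx_i$'s, BK/tree-graph diagrams, and cluster-size moments can produce genuine $k$-fold decay; that is what then yields $\exp(-c\log^2 M)$ when optimized. Without the joint FKG conversion, the moment-plus-Markov plan in Step~3 does not close, and your ``Main obstacle'' paragraph, while flagging the numerology, misses this structural issue.
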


A version of Lemma \ref{thm:regthm} in the case that $D$ is a cube $B(n)$ and  $Q  = \partial B(n)$ was proved as Theorem 4 of \cite{KN11}. Lemma \ref{thm:regthm} follows by an argument similar to the proof of that result; we omit the details.  The main use of Lemma \ref{thm:regthm} will be in ``extensibility'' arguments allowing the enlargement of the cluster of a site $\vx$, conditional on the value of $C_D(\vx)$.


Such extensibility arguments were also a key part of the argument showing \eqref{eq:onearmprob} appearing in \cite{KN11}. The set-up we will use differs from these previous extensibility results in a major way: namely, we typically want to extend clusters restricted to lie in the subgraph $\Zd_+$. This poses a couple of serious obstacles. The first problem is that we cannot use the usual two-point function $\tau(\vx,\vy)$ for lower bounds on the probability that long open connections exist, since $\tau(\vx,\vy)$ includes contributions from the event where such connections leave $\Zd_+$. More precisely, we need to compare $\prob\left(\vx \stackrel{\Z^d_+}{\longleftrightarrow} \vy \right)$ to $\tau(\vx,\vy)$. A main aim of Theorem \ref{thm:boxcon} is to provide a comparison between these two connectivity probabilities when $\vx$ and $\vy$ are a macroscopic distance from $S(0)$.

The second problem relates to our inability to effectively localize the half-space arm from $0$ on the event $\left\{0 \stackrel{\Z^d_+}{\longleftrightarrow} S'(n)\right\}$. Ideally, we would prove $\pi_H(2n) \geq c \pi_H(n)$ by conditioning on the existence of an arm to distance $n$ and showing it is likely to be extended.  This would require one to show that the distance-$n$ arm does not typically terminate close to $S(0)$, since the two-point function in $\Z^d_+$ behaves very differently near $S(0)$ than far from $S(0)$. Proving that half-space arms can be localized away from the boundary appears to be difficult a priori; to solve this problem we work in an annulus $\Anns$ and compare to the case of the half-space. As mentioned above, such a localization result does ultimately follow as a consequence of $\pi_H(n) \leq C n^{-3}$ and \eqref{eq:armtofarside}; this will be important for our work on the two-point function in (b) of Theorem \ref{Critical Exponents}.

For simplicity, we introduce the following abbreviations for \chJH{stating} the extensibility result. If $\vz \in B_H(k)$, where $n \leq k \leq 2n$ and if $0 < L \leq 3n - k$ is an integer, we define (see Figure \ref{Fig: lemma6 and Eqn19}(b) for a sketch)
\begin{equation}
\label{eq:aoutdef}
\mathsf{A}^{out}_\vz(n,k, L) :=  \left[ C_{\Anns(n/2,4n)}(\vz) \cap Ann_H(k, k+L)  \right]\ .
\end{equation}
If $\vz \in B_H(4n)$, we define
\[\mathsf{A}^{in}_\vz(n) := \left[ C(\vz) \cap B_{-}(n/4) \right]\ . \]
In this language, the main theorem on extensibility is as follows:

\begin{thm}\label{thm:mainextend}
There is some constant \chJH{$c_* > 0$} such that the following hold uniformly in $n \geq c_*^{-1}$, in $n^{1/10} \leq L \leq 3n - k$, in $n \leq k \leq 2n$ and in $M$ and $\vz$ as specified. 

\begin{itemize}
\item Let $D = B_H(k)$ and $Q = S'(k)$. Uniformly in $M \geq L^2 / 2$,
\begin{equation}
\label{eq:outextend}
\prob\left( \#\mathsf{A}^{out}_0(n,k,L) \leq c_* M L^2,\, X_Q(D, 0) = M \right) \leq (1-c_*) \prob(X_Q(D,0) = M).
\end{equation}
\item Let $D = Rect(n)$ and $Q =\partial_{\Zd_+} Rect(n)$ (the union of sides of $Rect(n)$ not lying along $S(0)$). Uniformly in $M \geq n^2 / 2$,
\begin{equation}
\label{eq:rectextend}
\prob\left( \#\mathsf{A}^{out}_0(4n,4n,8n) \leq c_* M n^2,\, X_Q(D, 0) = M \right) \leq (1-c_*) \prob(X_Q(D,0) = M).
\end{equation}
\item Let $D = \Anns(n/4,5n) $ and $Q = \partial_{-}\Anns(n/4,5n).$  Uniformly in $\vz \in \chJH{B_H(4n)}$ and in $M \geq n^2 / 2$,
\begin{equation}
\label{eq:inextend}
\prob\left(\#\mathsf{A}^{in}_\vz (n) \leq c_* M n^2,\, X_Q(D,\vz) = M \right) \leq (1-c_* ) \prob(\chJH{X_Q(D,\vz)} = M).
\end{equation}
\end{itemize}
\end{thm}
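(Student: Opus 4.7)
The plan is to prove all three parts of Theorem \ref{thm:mainextend} by a unified strategy, which I describe in detail for \eqref{eq:outextend}; parts \eqref{eq:rectextend} and \eqref{eq:inextend} then follow by analogous arguments with the obvious replacements for the pair $(D,Q)$ and the target region. After conditioning on $C_D(0) = \cC$ on the event $\{X_Q(D,0) = M\}$, edges with both endpoints outside $\cC$ remain i.i.d.\ Bernoulli$(p_c)$, while edges from $\cC$ into $D \setminus \cC$ are forced closed. It thus suffices to show that many of the $M$ boundary vertices $\Sigma := \cC \cap Q = \cC \cap S'(k)$ admit ``extensions'' into $Ann_H(k,k+L)$ through the open graph outside $\cC$. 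By the cluster regularity result Lemma \ref{thm:regthm}, outside an event absorbable into the factor $(1-c_*)$ (the bound $Cn^d e^{-c\log^2 M}$ from that lemma beats any polynomial tail of $\prob(X_Q(D,0)=M)$ since $M \geq L^2/2 \geq n^{1/5}/2$), at least $M/2$ vertices of $\Sigma$ are $K$-regular with respect to $D$ for some fixed large $K$.

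The core estimate is the following single-vertex extension bound: conditional on $\cC$, for each $K$-regular $\vx \in \Sigma$, the probability that $\#\bigl[C_{\Anns(n/2,4n)\setminus \cC}(\vx) \cap Ann_H(k,k+L)\bigr] \geq cL^2$ is bounded below by a constant. I would establish this by Paley-Zygmund. The first moment is of order $L^2$ from summing the two-point function bound $\tau_{\Anns(n/2,4n)}(\vx,\vy) \geq c\|\vx-\vy\|^{2-d}$ over a ball $\vx + B(c'L)$ fitting inside $\Anns(n/2,4n) \cap Ann_H(k,k+L)$ (the lower bound is immediate from Theorem \ref{thm:boxcon} applied to a containing cube, once one checks that the geometry of $\Anns$ near $\vx$ is macroscopic), and the contribution from $\vy \in \cC$ is of smaller order by the regularity of $\vx$. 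The second moment is at most $CL^4$ from the tree-graph bound underlying Lemma \ref{lem:momentaiz}.

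Finally I would aggregate over the regular vertices with a second Paley-Zygmund argument. The conditional mean of the number of successfully extending regular vertices is of order $M$, while the second moment is controlled by decoupling pairs of extensions from distinct regular vertices using BK (and by the single-vertex second-moment bound for the diagonal term). Distinctness of the output vertices is ensured by a regularity-based overlap bound: the total number of double-counted $\vy$ is negligible compared to $ML^2$. This produces the desired $\asymp ML^2$ vertices in $\mathsf{A}^{out}_0(n,k,L)$. Parts \eqref{eq:rectextend} and \eqref{eq:inextend} use the same scheme with minor changes: in \eqref{eq:rectextend} the target has thickness $\sim n$ rather than $L$, and in \eqref{eq:inextend} the extensions from $\vz$ are via unrestricted $\Zd$-paths into $B_{-}(n/4)$, which actually simplifies the single-vertex bound since one may directly sum the unrestricted two-point function. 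The main obstacle in the scheme is the single-vertex first-moment bound, and specifically the lower bound on $\tau_{\Anns(n/2,4n)}$: without the comparison between restricted and unrestricted two-point functions provided by Theorem \ref{thm:boxcon}, the entire extension program would fail to launch.
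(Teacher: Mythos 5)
Your overall framework --- conditioning on $C_D(0)=\cC$, using cluster regularity (Lemma~\ref{thm:regthm}) to restrict to $K$-regular boundary vertices, relying on Theorem~\ref{thm:boxcon} for the restricted two-point function, and closing with second-moment arguments --- matches the paper's setting. However, your two-stage Paley--Zygmund (per-vertex, then aggregate) has two gaps that the paper's key device, \emph{pivotality}, is designed to close. First, the per-vertex second moment of $\#\bigl[C_{\Anns(n/2,4n)\setminus\cC}(\vx)\cap(\vx+B(c'L))\bigr]$ is of order $L^6$ by precisely the tree-graph bound of Lemma~\ref{lem:momentaiz}, not $L^4$ as you claim; Paley--Zygmund then gives an extension probability of order $L^{-2}$ per vertex, not a constant (this is the natural scaling, consistent with $\pi(L)\asymp L^{-2}$). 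Second, your aggregation step cannot control overlap: nothing in the regularity of $\vx_1,\vx_2\in\cC\cap Q$ with respect to $D$ forbids them from lying in the same off-$\cC$ cluster, in which case their ``extensions'' coincide and the double-counting is total, not negligible; BK does not apply because the two extension events can then share, rather than have disjoint, witness paths.

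The paper's proof (Section~\ref{sec:proveext}) instead runs a single second-moment argument on the count $Y(k,L,K)$ of \emph{admissable} pairs $(\vz,\vy)$, where admissability includes (condition 5) that the edge $\{\vz,\vz'\}$ be pivotal for $\{0\lra\vy\}$, with $\vz'$ a chosen neighbor of $\vz$ outside $D$. Pivotality does three jobs at once. It ensures each $\vy$ has at most one admissable $\vz$ (the uniqueness argument showing $Y^K_Q\le 1$ in Lemma~\ref{lem:yrestfirst} applies: if two boundary vertices satisfied condition 5 for the same $\vy$, the $D$-connection to one could be used to route around the other's pivotal edge), so that $Y\le\#\mathsf{A}^{out}_0$. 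It enables the first-moment lower bound $\E\,Y\indi_M\geq cML^2\,\prob(X^{K-reg}(k)=M)$ of Lemma~\ref{thm:mombd1} via the edge-modification construction. And it yields disjoint off-$\cC$ witness paths for distinct pairs $(\vz_1,\vy_1)\neq(\vz_2,\vy_2)$, which is exactly what makes the second-moment bound $\E\,Y^2\indi_M\leq CM^2L^4\,\prob(X^{K-reg}(k)=M)$ of Lemma~\ref{thm:mombd2} possible --- the diagonal contribution $\asymp ML^6$ is absorbed precisely because $M\geq L^2/2$. This pivotality mechanism is what excludes the pathological overlap your sketch leaves open.
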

\chJH{We defer the proof of Theorem \ref{thm:mainextend} to Section \ref{sec:proveext}. We first, in Section \ref{sec:rtwopt}, prove Theorem \ref{thm:boxcon}, since it will be used in the proof of Theorem \ref{thm:mainextend} to generate open paths avoiding $B_{-}(n/4)$.}

\section{Proof of Theorem \ref{thm:boxcon}\label{sec:rtwopt}}
Note that the upper bound claimed in the theorem follows from the unrestricted two-point function: $\tau_{D}(0,\vx) \leq \tau(0,\vx) \leq \Cr{c:twopthigh}\|\vx\|^{2-d}$ for any $D \subseteq \Zd$. We will first  give the matching lower bound in a more restrictive setting than claimed in the theorem. The restriction will be removed via an inductive argument that bootstraps a lower bound on the two-point function $\tau_{B(n)}$ far from the box boundary to one slightly closer to the box boundary.

We now state the ``restrictive setting'' version of Theorem \ref{thm:boxcon} alluded to above.

\begin{prop}\label{prop:weakboxcon}
  There exist constants $M_0 > 1$ and $\cons_1 > 0$ such that the following holds uniformly in $n$. For all $\vx \in B(n) \setminus \{0\}$,
\[\tau_{B(M_0 n)}(0,\vx) \geq  \cons_1 \|\vx\|^{2-d}\ . \]
\end{prop}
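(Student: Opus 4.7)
My first step is to use monotonicity of $\tau_D$ in $D$ to reduce to the case $n = \|\vx\|$. Writing $r = \|\vx\|$ and $E_{\text{det}} := \{0 \lra \vx\} \setminus \{0 \sa{B(M_0 r)} \vx\}$, I would use the identity $\tau_{B(M_0 r)}(0, \vx) = \tau(0, \vx) - \prob(E_{\text{det}})$. Combined with the lower bound $\tau(0, \vx) \geq \Cr{c:twoptlow} r^{2-d}$ from \eqref{eq:twopt}, the proposition follows if I can show $\prob(E_{\text{det}}) \leq (1-\epsilon)\tau(0, \vx)$ uniformly in $r$, provided $M_0$ is taken large enough depending only on $d$.

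On $E_{\text{det}}$, the cluster $C(0)$ must reach $\partial B(M_0 r)$. Conditioning on the inner cluster $\cC := C_{B(M_0 r)}(0)$ (on $E_{\text{det}}$, $\vx \notin \cC$), the event requires an open path from some $\vy \in \cC \cap \partial B(M_0 r)$ to $\vx$ using only edges outside $B(M_0 r)$. Taking expectation over $\cC$, a first-moment / BK bound on the external connection yields
\[
\prob(E_{\text{det}}) \leq \sum_{\vy \in \partial B(M_0 r)} \tau(0, \vy)\, \tau(\vy, \vx) \asymp (M_0 r)^{3-d},
\]
using $\|\vy - \vx\| \geq (M_0 - 1) r$. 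Alternatively, a two-arm BK estimate via $E_{\text{det}} \subseteq \{0 \lra \partial B(M_0 r)\}\circ \{\vx \lra \partial B(M_0 r)\}$ gives $\prob(E_{\text{det}}) \lesssim (M_0 r)^{-4}$.

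The main obstacle is that neither of these estimates is uniformly at most a constant fraction of $r^{2-d}$: the ratios $M_0^{3-d} r$ and $M_0^{-4} r^{d-6}$ grow unboundedly in $r$ for any fixed $M_0$. To close the gap, my plan is to invoke the cluster regularity of Lemma \ref{thm:regthm}: on typical realizations of $\cC$, most boundary vertices $\vy \in \cC \cap \partial B(M_0 r)$ are regular, so by Lemma \ref{lem:nofurther} their further extensions toward $\vx$ are controlled more sharply than by the trivial estimate $\tau(\vy, \vx)$ used above. Combining this improvement with a Paley-Zygmund argument in the style of the proof of Lemma \ref{OneCluster} in Section \ref{sec:pihlb} (which also handles the atypical $\cC$) should yield the bound $\prob(E_{\text{det}}) \leq (1-\epsilon)\tau(0, \vx)$ for $M_0$ taken large relative to universal constants such as the threshold $K_0$ in Lemma \ref{thm:regthm}. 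The quantitative interplay of regularity exponents, the logarithmic factors in Definition \ref{RegDef}, and the choice of $M_0$ constitutes the most delicate part of the argument.
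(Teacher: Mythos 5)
Your decomposition $\tau_{B(M_0 r)}(0,\vx) = \tau(0,\vx) - \prob(E_{\text{det}})$ is exactly the paper's first step, and you correctly diagnose that both of your estimates for $\prob(E_{\text{det}})$ are off by a diverging power of $r$. The real issue, however, is that your proposed repair (cluster regularity plus Paley--Zygmund) cannot close the gap, and indeed points in the wrong direction. Paley--Zygmund, as used in the proof of Lemma \ref{OneCluster}, gives a \emph{lower} bound on a probability via a second-moment comparison; here you need a sharper \emph{upper} bound on $\prob(E_{\text{det}})$, and a second-moment inequality is of no help. Nor do Lemma \ref{thm:regthm} and Lemma \ref{lem:nofurther} supply the missing factor: they control, conditionally on $C_D(\vz)$, how likely the cluster is to extend further, but they cannot bound the \emph{number} of boundary exit points that your BK sum overcounts.

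To see what is actually missing, note that your first estimate is effectively
\[
\prob(E_{\text{det}}) \leq \sum_{\vy\in\partial B(M_0 r)} \tau(0,\vy)\,\tau(\vy,\vx)
\leq C\,[(M_0-1)r]^{2-d}\sum_{\vy\in\partial B(M_0 r)}\tau(0,\vy)\ ,
\]
and the last sum is of order $M_0 r$. If you instead condition on $\cC := C_{B(M_0 r)}(0)$ first --- on $E_{\text{det}}$ there must be a $\vy\in\cC\cap\partial B(M_0 r)$ with an open connection to $\vx$ \emph{off} $\cC$, which is the same argument as in the proof of Lemma \ref{lem:nofurther} --- then the sum becomes $\E X^{box}(M_0 r) = \sum_{\vy\in\partial B(M_0 r)}\prob\bigl(0\sa{B(M_0 r)}\vy\bigr)$. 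The key ingredient is that this expectation is \emph{uniformly bounded}, which is Lemma \ref{EXnbd} (Theorem 1.5(a) of \cite{HS14}); this is a deep external input, a factor of $M_0 r$ better than the trivial count, and it is not derivable from Lemma \ref{thm:regthm} or Lemma \ref{lem:nofurther}. With it one gets $\prob(E_{\text{det}})\leq C\Cons_1[(M_0-1)r]^{2-d}$, which for $M_0$ large is a small fraction of $\Cr{c:twoptlow}r^{2-d}$, finishing the proof. The paper itself takes an equivalent shortcut: it cites the estimate $\prob(0\lra\vx \text{ through } B(Mn)^c)\leq C(Mn)^{2-d}$ directly as \cite[(1.12)]{HS14} and is done in three lines. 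In short, your setup is correct, your identification of the deficiency in the BK bounds is correct, but the idea needed to close the gap is the external bound on $\E X^{box}$, not a regularity or second-moment argument.
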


\begin{proof}
\chJH{We say $\vx \lra \vy \text{ through } D$ if $\vx \lra \vy$ but every open path from $\vx$ to $\vy$ uses a vertex of $D$.}
Suppose $\vx \in B(n)$. Note that for any $M>1$, the event $\{0\lra \vx\}$ is a disjoint union of $\{0\sa{B(Mn)} \vx\}$ and $\{0\lra \vx \text{ through } B(Mn)^c\}$. Thus,

\begin{align*}
\tau_{B(Mn)}(0,\vx) = \prob(0 \leftrightarrow \vx) - \prob(0\lra \vx \text{ through } B(Mn)^c)\ .
\end{align*}
By \cite[(1.12)]{HS14}, the latter term of the right-hand side is bounded above by $C(Mn)^{2-d}$, uniformly in $\vx \in B(n)$. Using \eqref{eq:twopt}, the first term of the above is at least $\Cr{c:twoptlow} \|\vx\|^{2-d}$. Choosing $M$ large completes the proof.
\end{proof}

The result of Proposition \ref{prop:weakboxcon} will serve as the base case for an induction argument, which will prove Theorem \ref{thm:boxcon}. \chJH{In fact, our argument shows that the nested cubes of that theorem can be replaced by possibly oblong rectangles of arbitrary fixed aspect ratio. We state this strengthened version of the theorem for future reference:
\begin{thm}
	\label{thm:newboxtwopt}
	Fix $\alpha_i,\, \beta_i > 0$ for $1 \leq i \leq d$; fix also $M > 1$. For each $n$, let the rectangle
	\[R_n := [-\alpha_1 n, \beta_1 n] \times \ldots \times [-\alpha_d n, \beta_d n]\ . \]
	There is some $c = c(M, (\alpha_i), (\beta_i))$ such that, uniformly in $n$ and in $\vx \in R_n$,
	\[ \tau_{M R_n}(0,\vx) \geq c \|\vx\|^{2-d}\ . \]
	\chJH{Here, by $M R_n$, we mean the dilation of $R_n$ considered as a subset of $\bbR^d$, not as a subset of $\Z^d$ --- i.e., the set of $\vy \in \Zd$ such that $y(i) \in [-\alpha_i M n, \beta_i M n]$ for all $i$.}
\end{thm}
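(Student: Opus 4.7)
The plan is to prove Theorem \ref{thm:newboxtwopt} by adapting the inductive argument that proves Theorem \ref{thm:boxcon} (outlined in Section \ref{sec:rtwopt}), with cubes replaced by rectangles of the prescribed aspect ratios. As in the cube case, the argument consists of a base case for sufficiently large $M$ and an iterative improvement that reduces $M$ arbitrarily close to $1$.

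For the base case, I would note that $R_n \subseteq B(C_0 n)$ for $C_0 := \max_i \max(\alpha_i, \beta_i)$, and $MR_n \supseteq B(Mc_0 n)$ for $c_0 := \min_i \min(\alpha_i, \beta_i)$. Applying Proposition \ref{prop:weakboxcon} with its parameter $n$ replaced by $C_0 n$ gives
\[\tau_{B(M_0 C_0 n)}(0, \vx) \geq c_1 \|\vx\|^{2-d} \quad \text{for all } \vx \in R_n.\]
When $Mc_0 \geq M_0 C_0$, we have $B(M_0 C_0 n) \subseteq MR_n$, so by monotonicity of restricted two-point functions, $\tau_{MR_n}(0, \vx) \geq c_1 \|\vx\|^{2-d}$, giving the theorem for sufficiently large $M$.

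For the iterative improvement, I would assume the theorem for some ratio $M' > 1$ and derive it for a smaller ratio $M$ with $1 < M < M'$. Given $\vx \in R_n$, I construct a connection from $0$ to $\vx$ inside $MR_n$ in two pieces: (i) condition on the existence of an open arm from $\vx$ in a coordinate direction $\vd$, with length $L$ of order $n$ chosen so that the arm endpoint $\vy$ lies in the rescaled inner rectangle $R_{n'} := (M/M')R_n$; (ii) apply the inductive hypothesis with $n$ replaced by $n'$ --- noting that $M'R_{n'} = MR_n$ --- to produce a connection from $\vy$ to $0$ inside $MR_n$ with probability at least $c' \|\vy\|^{2-d}$. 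The arm from $\vx$ to $\vy$ and the connection from $0$ to $\vy$ can be placed on disjoint subsets of $MR_n$ (using the same decomposition as in the proof of Theorem \ref{thm:boxcon}), so independence of edges together with lower bounds on directed arm probabilities in the full space yield the required bound $\tau_{MR_n}(0, \vx) \geq c \|\vx\|^{2-d}$. Iterating finitely many times from the base case delivers the theorem for any $M > 1$.

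The main obstacle is step (i): choosing the arm direction $\vd$ and length $L$ so that $\vy$ lies in $R_{n'}$ uniformly over all $\vx \in R_n$. In the cube case this is automatic by symmetry, but for rectangles one must allow $\vd$ and $L$ to depend on $\vx$ and on the aspect ratios $(\alpha_i), (\beta_i)$. A case analysis over the coordinates $i$ for which $x(i) \notin [-(M/M')\alpha_i n, (M/M')\beta_i n]$ (i.e., those coordinates that must be moved toward the origin) identifies an appropriate $\vd$, and $L$ can then be chosen of order $(M'-M)n$ uniformly in $\vx \in R_n$. The remaining ingredients --- disjointness of the two pieces and the directed-arm probability bounds --- carry over from the cube case with no essential changes.
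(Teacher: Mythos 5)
Your overall architecture matches the paper's: start from the large-$M$ base case supplied by Proposition \ref{prop:weakboxcon} (your observation that $B(M_0 C_0 n)\subseteq MR_n$ once $Mc_0 \ge M_0 C_0$ is exactly right), and then iterate downward in $M$. The rescaling identity $M'R_{n'} = MR_n$ with $n' = (M/M')n$ is a clean way of phrasing the paper's notion of $M$-goodness, and the coordinate-by-coordinate case analysis you flag as ``the main obstacle'' is precisely what the paper's Claim \ref{ind hyp 1} does with its intermediate rectangles $B_j(n)$ --- and, for the record, the paper needs that coordinate-by-coordinate step even for cubes, so this is not really where the rectangle case adds difficulty.

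Where the proposal goes wrong is in the stated mechanism of the iterative step. You write: ``independence of edges together with lower bounds on directed arm probabilities in the full space yield the required bound.'' Taken literally --- an arm event of length $L\asymp n$ from $\vx$ to a single endpoint $\vy\in R_{n'}$, multiplied via independence by the two-point bound $c'\|\vy\|^{2-d}$ --- this gives $\pi(L)\cdot\|\vy\|^{2-d} \asymp n^{-2}\cdot n^{2-d} = n^{-d}$, which is short of the target $n^{2-d}$ by a factor of $n^{-2}$. That factor is recovered in the paper by an entirely different device: one conditions on the restricted cluster $C_D(\vx)$ (where $D$ is a box around $\vx$ of sidelength $\asymp n$), uses Lemma \ref{OneCluster} and Lemma \ref{thm:regthm} to show that with probability $\gtrsim n^{-2}$ the set $Q\cap C_D(\vx)$ of boundary vertices has cardinality $\gtrsim n^2$, and then runs a conditional second-moment (Paley--Zygmund) argument on the count $Y_Q^K$ of regular pivotal boundary vertices that further connect to $0$; the first moment of $Y_Q^K$ is $\gtrsim n^2\cdot n^{2-d}=n^{4-d}$ because one sums over the $\approx n^2$ candidate vertices $\vz\in Q$, and only after multiplying by $\prob(B_\eta)\gtrsim n^{-2}$ does one land at $n^{2-d}$. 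This is the content of Lemma \ref{lem:yrestfirst} and the claims $\cE_1,\cE_2,\cE_3$; it cannot be replaced by a single-arm-times-single-connection bound. Your phrase ``using the same decomposition as in the proof of Theorem \ref{thm:boxcon}'' gestures at this, but the explicit description you give of what produces the $\|\vx\|^{2-d}$ bound is the wrong one, and would not survive being made precise. So: right scaffolding and right base case, but you should replace the ``single arm + single connection'' story with the conditional second-moment argument on the number of regular pivotal vertices of $C_D(\vx)\cap Q$, and observe that nothing in that argument (including the definition of $D$, $Q$, $Y_Q^K$, and the edge-modification step) actually uses that the ambient domain is a cube --- it only uses $Q\subseteq$ the inner region, $D\subseteq MR_n\setminus B(n/3)$, and the regularity machinery, all of which carry over verbatim.
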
}
	For use in the proof, we introduce some shorthand for the boundary vertices of cubes reachable from $0$ within the cube:
\[ X^{box}(n):= X_{\partial B(n)}(B(n),0) =   \#\{\vx\in\partial B(n): 0 \sa{B(n)} \vx\}\ ,\]
\chJH{where in the first equality we use the notation of Section \ref{sec:extend} with $D = B(n)$ and $Q = \partial B(n)$.  }
We need a lemma bounding $\E X^{box}(n)$ for our proof of Theorem \ref{thm:boxcon}.
\begin{lem}[Theorem 1.5(a) of \cite{HS14}]\label{EXnbd}
	There is a constant $\Cons_1>0$ such that $\E X^{box}(n) \le \Cons_1$ uniformly in $n \geq 1$. 
\end{lem}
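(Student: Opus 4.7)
Lemma \ref{EXnbd} is quoted from Theorem 1.5(a) of \cite{HS14}, so the formal proof is simply to cite that result. I briefly describe the strategy one would pursue, either to reconstruct the bound or to understand why it holds.

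By linearity of expectation,
\[ \E X^{box}(n) = \sum_{\vx \in \del B(n)} \tau_{B(n)}(0, \vx). \]
The elementary pointwise bound $\tau_{B(n)}(0, \vx) \le \tau(0, \vx) \le \Cr{c:twopthigh} \|\vx\|^{2-d}$ from \eqref{eq:twopt}, combined with $\#\del B(n) \asymp n^{d-1}$, only gives $\E X^{box}(n) = O(n)$, which is too weak by one power of $n$. The key input from \cite{HS14} is a sharper Dirichlet-type decay of the form
\[ \tau_{B(n)}(0, \vx) \le C \|\vx\|^{1-d} \quad \text{for } \vx \in \del B(n), \]
which, summed over the $\asymp n^{d-1}$ boundary vertices, yields the claimed $O(1)$ bound.

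Heuristically, the improvement reflects the fact that a typical full-space open path from $0$ to a boundary vertex $\vx$ extends at scale $\asymp \|\vx\|$ past $\vx$ (by the diffusive intrinsic-distance scaling of critical high-dimensional percolation, mirroring critical branching random walk), so requiring it to remain strictly inside $B(n)$ should cost an extra factor of order $1/n$. This is in close analogy with the $n^{1-d}$ scaling of the Dirichlet Laplacian Green's function on $B(n)$ between the origin and a boundary point, and with the uniform order-$n^{1-d}$ exit distribution of simple random walk from $B(n)$.

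The rigorous argument of \cite{HS14} implements this heuristic via a finite-volume lace expansion. One shows that $\tau_{B(n)}$ admits an expansion whose coefficients are controlled by the same diagrammatic estimates as in full space, and inverts it via an infrared bound adapted to the Dirichlet problem on $B(n)$. The main technical obstacle --- the principal new input relative to the standard full-space lace expansion --- is controlling the expansion and the infrared bound uniformly near $\del B(n)$, where finite-volume corrections become non-negligible. Given this input, the stated $O(1)$ bound follows by summation.
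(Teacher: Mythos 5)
The paper gives no proof of this lemma; it is quoted directly from Theorem 1.5(a) of \cite{HS14}, so your formal treatment (citation) coincides with the paper's. One caveat on your commentary: the pointwise Dirichlet bound $\tau_{B(n)}(0,\vx)\le C\|\vx\|^{1-d}$ that you posit as the key input is a strictly stronger, local statement than the averaged bound $\E X^{box}(n)=O(1)$ and need not be what \cite{HS14} establishes --- in fact, elsewhere in the present paper the logic runs the other way, with an averaged boundary-count estimate (Lemma \ref{lem:fromloweradhoc}) serving as the \emph{input} to the pointwise $\|\cdot\|^{1-d}$ decay of the half-space two-point function (\eqref{eq:twopthsupper} in Theorem \ref{thm:hstwopt}).
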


\noindent
{\it Proof of Theorem \ref{thm:boxcon} and Theorem \ref{thm:newboxtwopt}. }
  \chJH{We prove the notationally simpler case of a cube --- that is, we prove Theorem \ref{thm:boxcon} --- in detail, then describe the modifications necessary for other rectangular regions.} Let $F^R(\cdot):=||\cdot||^{d-2}\tau_{B(R)}(0, \cdot)$. For $M > 1$, say
  that  
  $\tau$ is  {\it $M$-good} if  there are constants $c(M), n_0(M)$ so that $F^{Mn}|_{B(n)}\ge c$ for all $n\ge n_0$.  The proof of Theorem \ref{thm:boxcon} is inductive, and Proposition \ref{prop:weakboxcon} initializes the induction. 
The inductive \chJH{step is accomplished by the following claim}.
\begin{clam} \label{ind hyp}
If $\tau$ is $M$-good and $\alpha(M) := \min\{4/3, (M + 1)/2\}$, then $\tau$ is  $(M/\alpha(M))$-good.
\end{clam}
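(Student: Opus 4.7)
Write $M' := M/\alpha(M)$ and $\tilde n := n/\alpha(M)$, so that $M\tilde n = M'n$. The goal is to show $\tau_{B(M'n)}(0,\vx) \geq c'\|\vx\|^{2-d}$ uniformly in $n$ and in $\vx \in B(n)\setminus \{0\}$. The first reduction is the inner case: if $\vx \in B(\tilde n)$, the inductive $M$-goodness applied at scale $k = \tilde n$ yields $\tau_{B(Mk)}(0,\vx) = \tau_{B(M'n)}(0,\vx) \geq c\|\vx\|^{2-d}$ immediately. Thus only the ``shell case'' $\vx \in B(n)\setminus B(\tilde n)$ requires work.

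For $\vx$ in the shell, by the symmetries of $\Z^d$ we may assume $x_1 = \|\vx\|$, so $\vx$ is closest to the face $\{z_1 = M'n\}$ of $\partial B(M'n)$; the case where $\vx$ has several extremal coordinates is treated by a diagonal arm direction, arguing analogously. Set $L := (M-1)n/2$ and define the arm box
\[ B^\vx := \{\vz : x_1 - L \leq z_1 \leq x_1,\, |z_i - x_i| \leq L_i \text{ for } i \geq 2\}, \]
where the transverse widths $L_i$ are chosen so that $B^\vx \subseteq B(M'n)$ and the inward face $F^\vx := B^\vx \cap \{z_1 = x_1 - L\}$ lies inside $B(\tilde n)$. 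The definition $\alpha(M) = \min\{(M+1)/2, 4/3\}$ is precisely calibrated for this construction to be possible: in the $\alpha = (M+1)/2$ branch, the relevant coordinate inequality reduces to $(M-1)^2 \geq 0$, while the $\alpha = 4/3$ branch is handled by verifying the longitudinal and transverse conditions separately. In particular, every $\vy \in F^\vx$ lies in $B(\tilde n)$, so by the inductive hypothesis $\tau_{B(M'n)}(0,\vy) \geq c\|\vy\|^{2-d}$.

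The plan is to express the target connection $0 \lra \vx$ in $B(M'n)$ as a two-stage event: an arm from $\vx$ to some $\vy \in F^\vx$ using edges of $B^\vx$, followed by an extension from $\vy$ to $0$ using edges of $B(M'n)\setminus B^\vx$. Since these events depend on disjoint edge sets, they are independent. The hard part is that a naive single-point FKG combination $\prob(\vx \sa{B^\vx}\vy)\cdot\prob(\vy \sa{B(M'n)\setminus B^\vx} 0)$ gives only a bound of order $\|\vx\|^{2(2-d)}$, short of the target by a factor of $\|\vx\|^{2-d}$. I would overcome this via a second-moment argument on the random count
\[ Y := \#\bigl\{\vy \in F^\vx : \vx \sa{B^\vx} \vy \text{ and } \vy \sa{B(M'n)\setminus B^\vx} 0\bigr\}. \]
Using the independence of inside and outside, a shifted version of Lemma \ref{EXnbd} ($\sum_{\vy \in F^\vx}\tau_{B^\vx}(\vx,\vy) \asymp 1$), and the inductive hypothesis together with a BK-based pruning (showing $\tau_{B(M'n)\setminus B^\vx}(\vy,0) \asymp \|\vy\|^{2-d}$ uniformly in $\vy \in F^\vx$, since the excised edges of $B^\vx$ contribute negligibly), one obtains $\E Y \asymp \|\vx\|^{2-d}$.

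The second moment $\E[Y^2]$ is bounded via the tree-graph inequality as in the proof of Lemma \ref{lem:momentaiz}, together with the inside/outside edge-disjoint factorization and convolution estimates for $\tau$ on the $(d-1)$-dimensional face $F^\vx$, yielding $\E[Y^2] \leq C\|\vx\|^{2-d}$. Paley--Zygmund then gives $\prob(Y \geq 1) \geq (\E Y)^2 / \E[Y^2] \geq c\|\vx\|^{2-d}$, and since $\{Y \geq 1\} \subseteq \{0 \sa{B(M'n)} \vx\}$ the claim follows. The technical heart of the argument is the matching second-moment estimate, which must be carried out carefully (tracking the restricted two-point functions at each step) to avoid losing any polynomial factor in $\|\vx\|$.
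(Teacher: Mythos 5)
Your reduction to the ``shell case'' $\vx\in B(n)\setminus B(\tilde n)$ and the general second-moment strategy are sound, but the geometry of your arm box $B^\vx$ introduces two hidden half-space penalties that together cost you a factor of $\|\vx\|^{-2}$, which kills the first-moment lower bound. (i) You place $\vx$ on the \emph{face} $\{z_1=x_1\}$ of $B^\vx$, so the arm from $\vx$ to $F^\vx$ inside $B^\vx$ is a half-space-type arm, not a full-space arm. The correct order of $\sum_{\vy\in F^\vx}\tau_{B^\vx}(\vx,\vy)$ is $\asymp n^{-1}$, not $\asymp 1$; Lemma~\ref{EXnbd} gives only an upper bound and does not apply with the source on the boundary, and the paper's Lemma~\ref{lem:fromloweradhoc} shows that for a boundary source the analogous expectation is of order $n^{-1}$. (ii) You require the extension from $\vy\in F^\vx$ to $0$ to avoid the \emph{entire} box $B^\vx$. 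Since $\vy$ lies on $\partial B^\vx$, this is again a half-space-type constraint near $\vy$, and $\tau_{B(M'n)\setminus B^\vx}(\vy,0)\asymp\|\vy\|^{1-d}$, not $\|\vy\|^{2-d}$ (compare the boundary/interior case of part~(b) of Theorem~\ref{Critical Exponents}). The claim that ``the excised edges of $B^\vx$ contribute negligibly'' is exactly what fails: excising a $\Theta(n)$-size box touching $\vy$ is a macroscopic constraint. Putting the two corrected estimates together yields $\E Y\asymp n^{-d}$, and since $\prob(Y\geq 1)\leq \E Y$, no second-moment refinement can recover the target $c\|\vx\|^{2-d}$.

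The paper's proof avoids both penalties by a different geometry: the box $D=\vx+B((\alpha(M)-1)n)$ is \emph{centered} at $\vx$ (so the arm from $\vx$ to $\partial D$ is a full-space arm, order $n^{-2}$), and the extension from a boundary point $\vz\in Q\subseteq\partial D$ to $0$ is required only to avoid the cluster $C_D(\vx)$ (which has controlled size by the regularity machinery of Definition~\ref{RegDef} and Lemma~\ref{thm:regthm}), not the whole box $D$. This second point is essential: avoiding a sparse cluster costs only an $O(K^{6-d}\log^7 K)$ correction which can be made negligible, whereas avoiding the entire box costs a power of $n$. The $\alpha(M)=\min\{4/3,(M+1)/2\}$ calibration in the paper serves to ensure $D\subseteq B(Mn)\setminus B(n/3)$ (so $Q$ is macroscopically far from $0$), not to make a one-sided box fit. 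Finally, the paper also uses pivotality of a single edge $\{\vz,\vz'\}$ to force $Y_Q^K\leq 1$ almost surely, which makes the second-moment estimate essentially automatic; your $Y$ is not a.s.\ bounded, so your sketch of the tree-graph bound on $\E[Y^2]$ would have to do real work even if the first moment were correct.
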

 It is not hard  to see that
if $\tau$ is $M_0$-good for some $M_0>1$ (which is guaranteed  by Proposition  \ref{prop:weakboxcon}),  then one can show that $\tau$ must be $M$-good for any $M\in (1, M_0)$ by applying Claim \ref{ind hyp} finitely many times.
This proves Theorem \ref{thm:boxcon}.  
 
 To prove Claim \ref{ind hyp} it is enough to show that if $F^{Mn}|_{B(n)}$ is bounded away from 0, then so is $F^{Mn}|_{B(\alpha(M)n)}$. So, if 
$B_j(n) := \{\vx\in \Zd: |x(1)|, \ldots, |x(j)|\le \alpha(M)n;\, |x(j+1)|, \ldots, |x(d)|\le n\}$~obey
 \begin{clam} \label{ind hyp 1}
If $F^{Mn}|_{B_j(n)}$ (where $0 \leq j < d$) is bounded away from 0 for all $n$ large enough, then so is  
$F^{Mn}|_{B_{j+1}(n)}$.
\end{clam}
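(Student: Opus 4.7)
The plan for Claim \ref{ind hyp 1} is a two-stage cluster argument. Fix $\vx \in B_{j+1}(n) \setminus B_j(n)$. By the reflection symmetry $x(j+1) \mapsto -x(j+1)$ and since $B_j(n)$ is already covered by the inductive hypothesis, I may assume $x(j+1) \in (n, \alpha(M)n]$. The target is to show $\tau_{B(Mn)}(0,\vx) \ge c\|\vx\|^{2-d}$ uniformly in such $\vx$.

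The first stage is to condition on a large ``downward'' piece of $C(\vx)$ trapped in a slab. Let $V := B(Mn) \cap \{y(j+1) \ge n\}$ (which contains $\vx$), and pick a box $S' \subset \{y(j+1) = n\}$ of sidelength $\asymp n$ located inside $B_j(n)$ so that every $\vy \in S'$ satisfies $\|\vy\| \asymp \|\vx\|$ and $\|\vy - \vx\| \asymp n$. A Paley--Zygmund argument --- closely paralleling the proofs of \eqref{reg est 1}--\eqref{reg est 3} in Lemma \ref{OneCluster} --- applied to the count $\#(C_V(\vx) \cap S')$ would show that with a prescribed lower-bound probability, the touchdown set $\mathcal Y := C_V(\vx) \cap S'$ is large. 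The first moment uses \eqref{eq:twopt} (refined using slab/half-space two-point estimates when $\vx$ and $\vy$ are near the plane $\{y(j+1) = n\}$), and the second moment uses tree-graph bounds as in Lemma \ref{lem:momentaiz}.

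In the second stage, I would condition on the cluster $C_V(\vx)$ (which determines $\mathcal Y$) and exploit the independence of the edges of $B(Mn) \setminus V$ from the conditioning. Since $\mathcal Y \subset B_j(n)$, the inductive hypothesis gives $\tau_{B(Mn)}(0, \vy) \ge c \|\vy\|^{2-d}$ for each $\vy \in \mathcal Y$. A second Paley--Zygmund computation, this time on the count of those $\vy \in \mathcal Y$ with $0 \sa{B(Mn) \setminus V^{\circ}} \vy$ (where $V^{\circ} = V \setminus \{y(j+1) = n\}$), yields that conditionally on the first-stage event, with positive enough probability at least one such $\vy$ exists. Multiplying the probabilities from the two stages gives the lower bound $\tau_{B(Mn)}(0,\vx) \gtrsim \|\vx\|^{2-d}$ required by the claim.

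The central obstacle is that the inductive hypothesis yields an estimate for $\tau_{B(Mn)}(0,\vy)$, whereas the second stage requires the restricted connectivity $\tau_{B(Mn) \setminus V^{\circ}}(0,\vy)$. To use the independence structure cleanly, one must show that a positive fraction of the $\tau_{B(Mn)}$-probability comes from paths avoiding $V^{\circ}$ --- equivalently, that paths from $0$ to $\vy$ traversing the slab $V^{\circ}$ contribute only a bounded fraction of the full two-point function. This is analogous in spirit to the estimate \cite[(1.12)]{HS14} that controls paths leaving $B(Mn)$ in the proof of Proposition \ref{prop:weakboxcon}, but here applied to a slab rather than the cube exterior. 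Obtaining such control could proceed either by bootstrapping the inductive hypothesis at smaller scales or by a direct comparison between $\tau_{B(Mn)}$ and $\tau_{B(Mn) \setminus V^{\circ}}$ via lattice-symmetry arguments.
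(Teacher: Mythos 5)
Your two-stage Paley--Zygmund structure (grow the cluster of $\vx$ into $B_j(n)$, then reconnect to $0$) is in the same spirit as the paper's proof, and the ``central obstacle'' you identify is exactly the crux --- but the resolution you sketch does not work, and this is a genuine gap rather than a technical loose end. The issue is quantitative: after conditioning on stage~1, your $\vy \in \mathcal Y$ lies on the bounding hyperplane $\{y(j+1) = n\}$ of the region $B(Mn)\setminus V^{\circ}$ in which you must build the connection from $0$. The restricted two-point function from an interior point to a boundary point of a half-space-like region scales as $n^{1-d}$, not $n^{2-d}$ (this is exactly eq.~\eqref{eq:twopthsupper}/\eqref{eq:twopthslower} of Theorem~\ref{thm:hstwopt}, which itself depends on Theorem~\ref{thm:boxcon} as input). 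Consequently the ``fraction of $\tau_{B(Mn)}(0,\vy)$ coming from paths avoiding $V^{\circ}$'' is of order $n^{-1}$, and your stage~2 count has expectation $\asymp n^2\cdot n^{1-d} = n^{3-d}$ rather than $n^{4-d}$. Multiplying by the stage-1 probability $\asymp n^{-2}$ you would land on $\tau_{B(Mn)}(0,\vx)\gtrsim n^{1-d}$, one full power of $n$ short of what Claim~\ref{ind hyp 1} requires. There is no lattice-symmetry fix: the avoidance of a macroscopic slab genuinely costs a factor of $n$.

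The paper sidesteps this by never requiring the reconnecting path to avoid a geometric region. It takes $D = \vx + B((\alpha(M)-1)n)$ (a \emph{box} around $\vx$, not a slab), lets $Q$ be the face of $\partial D$ that is in, or one unit outside, $B_j(n)$, and conditions on the \emph{random} cluster $C_D(\vx)$. For $\vz\in Q$ it chooses an auxiliary vertex $\tz\in \vz-[K/2,K]^d\subseteq B_j(n)$ at \emph{constant} distance $K$ from $D$; the inductive hypothesis applies to $\tz$ and gives the full $c_M n^{2-d}$. The reconnection from $0$ to $\tz$ is required to avoid only the finite random set $C_D(\vz)$, and the correction for paths through that set is bounded by a union bound plus $K$-regularity: it is $O(K^{6-d}\log^7 K)\cdot n^{2-d}$, made negligible by choosing $K$ large. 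A local edge-modification in $\vz + B(O(K))$ then turns $\tz\sa{B(Mn)}0$ into $\vz\sa{B(Mn)}0$ with $\{\vz,\vz'\}$ pivotal, and counting such pivotal $\vz$ (which can occur at most once, giving $Y_Q^K\le1$) makes the second-moment step easy. The moral is that conditioning should exclude only the already-explored cluster, not the slab containing it.
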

  \noindent
 then Claim \ref{ind hyp} follows from Claim \ref{ind hyp 1} by using induction on $j$.
Note that the hypothesis of Claim \ref{ind hyp} initializes the 
 induction argument for Claim \ref{ind hyp 1} at $j=0$.

To show Claim \ref{ind hyp 1} suppose $F^{Mn}|_{B_j(n)}$ is bounded away from 0 for some  $0\leq j<d$, so for some constant $c_M>0$,
\beq \label{assume}
\tau_{B(Mn)}(0,\vx)\geq c_M||\vx||^{2-d} \;\ \text{ for all }n \geq 1 \text{ and } \vx\in B_j(n).
\eeq
 Fix an arbitrary $\vx \in B_{j+1}(n)\setminus B_j(n)$. We will bound $\tau_{B(Mn)}(0,\vx)$ from below. Without loss of generality we can assume  that $x(i) \geq 0$ for all $i$, as other cases are similar. Let
 \beq  \label{D prop}
D = \vx + B((\alpha(M)-1)n), \text { so } D \subseteq B(Mn)\setminus \chJH{B(n/3) }
 \eeq
 by our choice of $\alpha(\cdot)$. Also,
  \chJH{$\partial D$} contains the $(d-1)$-dimensional quadrant
  \[ Q:=\{\vy\in D: y(i)\leq x(i) \chJH{\text{ for all $i \neq j+1$, and } y(j+1) = x(j+1)} -\lfloor (\alpha(M)-1)n\rfloor\}\ . \] 
  \chJH{Either $Q \subseteq B_j(n)$, or for each vertex $\vz$ of $Q$, we have $\vz - \ve_{j+1} \in B_j(n)$.  }
 
\begin{figure}
\centering
(a)\includegraphics[width=7.7cm, height=6cm]{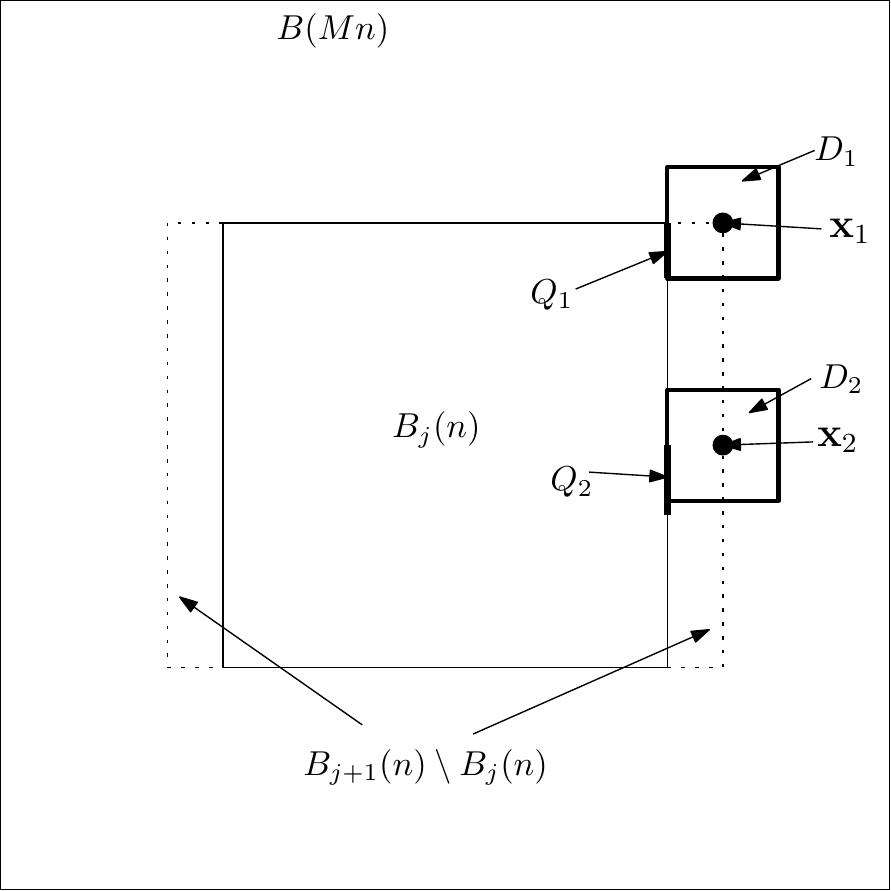}\hfill
(b)\includegraphics[width=7.7cm, height=6cm]{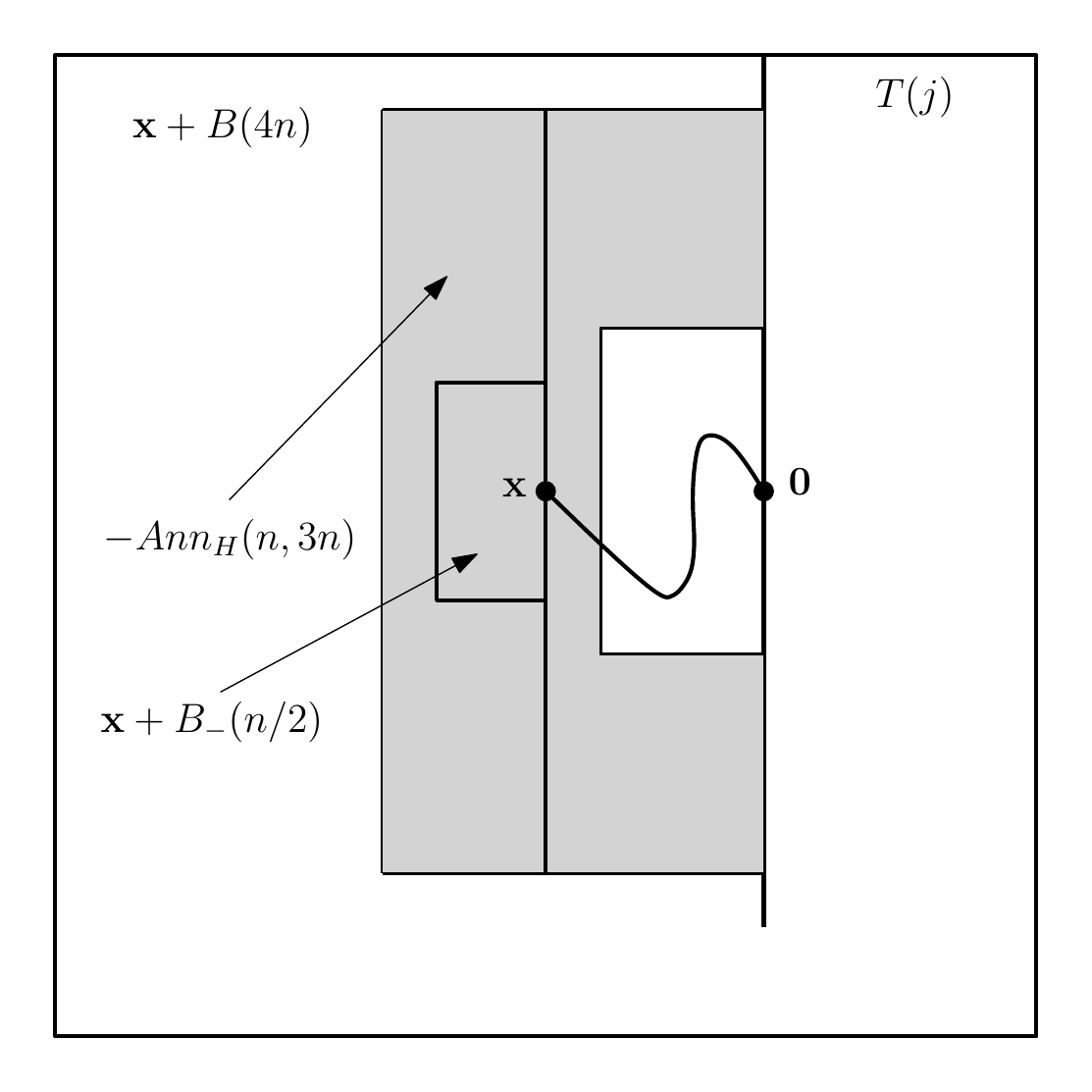}
\caption{(a) Referenced above \eqref{ind hyp suff}. $(\vx_i, D_i, Q_i), i=1, 2,$ are two possible locations of $(\vx, D, Q)$. (b) Referenced below \eqref{eq:justtofig}. This is an instance of the event when $\vx\in T(j)$ has an open connection to $0$ staying within $\vx+B(4n)$ and avoiding $\vx+B_-(n/2)$.}
\label{Fig: Dloc}
\end{figure}

If $\vx$ is on the $i$-th axis for some $i$, then an entire side of $D$ (perpendicular to the $i$-th axis) containing $Q$ lies in, \chJH{or is shifted by $\ve_{j+1}$ from,} $B_j(n)$. At the other extreme, when $\vx$ is at the corner of $B_{j+1}(n)$ belonging to $\{\vy\in\Zd: y(i)\geq 0\}$, then either $\del D\cap B_j(n)$ \chJH{or $[\partial D - \ve_{j+1}] \cap B_j(n)$} equals $Q$.  See Figure \ref{Fig: Dloc}(a)  for possible locations of $D$.
 Now note that if  $F_\vz:=\{\vz\sa{D} \vx ,\, \vz\sa{B(Mn)} 0\}$, then 
Claim \ref{ind hyp 1} will follow if we show that there is a constant $c>0$ (independent of $\vx$ and $n$) such that
\beq \label{ind hyp suff}
\prob(\cup_{\vz\in Q} F_\vz) \geq c n^{2-d} \text{ for all $n$ large enough,}
\eeq
because $\cup_{\vz\in Q} F_\vz$ implies $\{\vx\sa{B(Mn)} 0\}$. To prove \eqref{ind hyp suff}, let 
$Y_Q^K$ be the number of $\vz$ in $Q\cap C_D(\vx)\cap REG_D(K)$ such that $\vz \sa{B(Mn)}0$ and
such that  the edge $\{\vz, \vz'\}$ is pivotal for the event $\{\vx \leftrightarrow 0\}$,
 where $\vz' = \vz - \ve_{j+1}$ lies in $B_j(n) \setminus D$.
The following lemma gives bounds for the (conditional) moments of $Y_Q^K$. As above, we introduce abbreviated notation for $X_Q(D,\vx)$ in order to make equations more readable.
\begin{lem}\label{lem:yrestfirst}
(1) Let $X_{\del D} := \#\del D\cap C_D(\vx), X_Q := \#Q\cap C_D(\vx)$ and
 $X_Q^{K-reg}:=\# Q\cap C_D(\vx)\cap REG_D(K)$.
There are constants $\eta,\cons_1(\eta)>0$ (independent of $\vx$ and $n$) such that 
\beq    \label{eq:quadbd}
\text{if } B_\eta:=\{\eta n^2 <  X_Q^{K-reg} \leq X_Q \leq X_{\del D} < \eta^{-1} n^2\}, \text{ then }
  \prob(B_\eta) \geq \cons_1 n^{-2}\ .
\eeq
(2) Let $\eta>0$ be such that \eqref{eq:quadbd} holds. There are constants $K_0, \Cons_2, \cons_2>0$ such that for all $K>K_0$ and all $\eta n^2 < N < \eta^{-1}n^2$,
\beqax
(2A) && \E [(Y_Q^K)^2; X_Q^{K-reg} = N, B_\eta] \leq \Cons_2 n^{4-d} \prob\left(X_Q^{K-reg} = N;  B_\eta\right)\ ;\\
(2B) && \E [Y_Q^K; X_Q^{K-reg} = N, B_\eta] \geq \cons_2 n^{4-d} \prob\left(X_Q^{K-reg} = N; B_\eta\right)\ . 
\eeqax
\end{lem}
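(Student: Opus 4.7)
By \eqref{eq:onearmprob} translated to $\vx + B((\alpha(M)-1)n)$, $\prob(\vx \lra \partial D) \asymp n^{-2}$, so it suffices to establish $B_\eta$ with positive conditional probability. The upper bound $X_{\partial D} < \eta^{-1} n^2$ follows from Markov's inequality applied to Lemma \ref{EXnbd} (after translation): $\prob(X_{\partial D} \geq \eta^{-1} n^2) \leq \Cons_1 \eta n^{-2}$, which can be made much smaller than $\prob(\vx \lra \partial D)$ by choosing $\eta$ small. For the lower bound $X_Q > \eta n^2$, I would run a Paley-Zygmund argument conditional on $\{\vx \lra Q\}$ (whose probability matches $\prob(\vx \lra \partial D)$ up to a multiplicative constant by lattice symmetry), with conditional first moment $\asymp n^2$ and second moment $\lesssim n^4$ (the latter via Lemma \ref{lem:momentaiz}), yielding constant conditional probability of $X_Q \gtrsim n^2$. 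Finally, Lemma \ref{thm:regthm} applied to $D$ ensures $X_Q^{K-reg} \geq X_Q/2$ with high conditional probability for $K$ large enough, completing \eqref{eq:quadbd}.

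\textbf{Part (2).} I would condition on $C_D(\vx) = \cC$ with $\cC \cap Q \cap REG_D(K) = \{\vz_1, \ldots, \vz_N\}$. The edges outside $\cC$ are independent of $\cC$, and each $\vz_i' = \vz_i - \ve_{j+1}$ lies in $B_j(n) \setminus D$. The combined event ``$\{\vz_i, \vz_i'\}$ pivotal for $\{\vx \lra 0\}$ and $\vz_i \sa{B(Mn)} 0$'' then simplifies to ``$\{\vz_i, \vz_i'\}$ is open and $\vz_i' \lra 0$ in $B(Mn) \setminus \cC$.'' For \textbf{(2B)}, the inductive hypothesis \eqref{assume} gives $\tau_{B(Mn)}(\vz_i', 0) \geq c n^{2-d}$ (since $\vz_i' \in B_j(n)$ and $\|\vz_i'\| = \Theta(n)$). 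To show the $\cC$-avoidance costs at most a constant factor, I apply BK to bound $\prob(\vz_i' \lra 0 \text{ through } \cC) \leq \sum_{\vw \in \cC} \tau(\vz_i', \vw) \tau(\vw, 0)$, decompose $\cC$ into dyadic shells around $\vz_i$, and use the regularity estimate $\#[\cC \cap (\vz_i + B(s))] \leq s^4 \log^7 s$ (valid for $s \geq K$) together with high-dimensional convolution estimates to conclude that the error is at most $(1/2) \tau_{B(Mn)}(\vz_i', 0)$ for $K$ large. Summing the resulting $\gtrsim n^{2-d}$ lower bound over the $N \geq \eta n^2$ regular vertices and averaging over $\cC$ yields (2B).

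\textbf{(2A) and the main obstacle.} The diagonal contribution is $\E[Y_Q^K | \cC] \leq C N n^{2-d} \leq C n^{4-d}$. For off-diagonal pairs $\vz_i \neq \vz_j$, BK combined with a decomposition by the possible common meeting vertex $\vw$ of the exterior extension paths gives
\[\prob(\text{both contribute} \mid \cC) \leq C \sum_\vw \tau(\vz_i', \vw)\tau(\vz_j', \vw)\tau(\vw, 0) \leq C \tau(\vz_i', 0)\tau(\vz_j', 0) \leq C n^{4-2d},\]
where the second inequality uses the triangle condition in high dimensions (a standard consequence of the infrared bound). Summing over $N^2$ pairs with $N \leq \eta^{-1} n^2$ gives $\sum \leq C \eta^{-2} n^{8-2d}$; since $d \geq 11$, this is $\leq C n^{4-d}$, so $\E[(Y_Q^K)^2 | \cC] \leq C n^{4-d}$ uniformly, yielding (2A). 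The most delicate step I anticipate is the dyadic decomposition for $\cC$-avoidance in (2B), which requires choosing $K$ large enough that the aggregate BK error from all scales remains a fixed small fraction of $\tau_{B(Mn)}(\vz_i', 0)$; a secondary point is verifying that Lemma \ref{thm:regthm} applies to the translated box $D$, which should be transparent since the hypothesis of that lemma covers dilations and translations of a fixed box.
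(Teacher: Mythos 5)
There are two genuine gaps in your proposal.

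\textbf{Part (1).} Your Paley--Zygmund argument conditional on $\{\vx \lra Q\}$ requires a lower bound $\E[X_Q \mid \vx \lra Q] \gtrsim n^2$ and an upper bound $\E[X_Q^2 \mid \vx \lra Q] \lesssim n^4$, and neither is established. For the first moment, $\E X_Q \leq \Cons_1$ by Lemma~\ref{EXnbd}, so $\E[X_Q \mid \vx \lra Q] \lesssim n^2$; but a matching lower bound requires $\E X_Q \gtrsim 1$, i.e.\ a lower bound on the restricted two-point function $\tau_D(\vx,\vz)$ for $\vz\in Q\subseteq\partial D$, which is of order $n^{1-d}$, not $n^{2-d}$, and is not provided by anything cited. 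For the second moment, Lemma~\ref{lem:momentaiz} bounds $\E[(\#C(0)\cap B(n))^2]\lesssim n^6$, which after dividing by $\prob(\vx\lra Q)\asymp n^{-2}$ gives a conditional second moment of order $n^8$, not $n^4$; with that, Paley--Zygmund yields only $\prob(B_\eta)\gtrsim n^{-6}$, which is useless. Bounding the second moment of a \emph{boundary} count $X_Q$ at the scale $n^2$ needs genuinely different input. The paper sidesteps this entirely by applying Lemma~\ref{OneCluster} (translated to $\vx$), which packages the needed probability lower bound on $\{\#C_D(\vx)\cap\partial D > \eta' n^2\}$, and then projects to $Q$ via the $d2^d$-fold lattice symmetry of the cube.

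\textbf{Part (2B).} The claimed simplification --- that $\{\vz,\vz'\}$ being pivotal for $\{\vx\lra 0\}$ together with $\vz\sa{B(Mn)}0$ ``simplifies to'' $\{\vz,\vz'\}$ open and $\vz'\lra 0$ in $B(Mn)\setminus\cC$ --- is only one-directional. Pivotality \emph{implies} the latter (which is why the paper can use it for the upper bound (2A)), but the converse fails: even if $\vz'$ is connected to $0$ off $\cC$, some other vertex of $\cC\cap\partial D$ could independently connect to $0$, destroying pivotality. So your lower bound argument for $\E[Y_Q^K\mid\cC]$ is not valid as written. The paper handles exactly this difficulty with the additional cluster-disjointness event $\cE_3(\vz,\tz)=\{C(\vz)\cap C(\tz)=\varnothing\}$ and, crucially, with a local edge-modification surgery: after finding an auxiliary vertex $\tz$ with $C(\vz)$ and $C(\tz)$ disjoint and $\tz\sa{B(Mn)}0$, one closes all edges in $[\vz+B(4K)]\setminus D$ except a designated path and opens a short path from $\vz'$ to it, thereby \emph{forcing} pivotality of $\{\vz,\vz'\}$ at a bounded probabilistic cost $e^{-CK^d}$. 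You do not address either ingredient, and your dyadic/BK calculation for ``$\cC$-avoidance'' (which does parallel the paper's Claim~\ref{clam:e1}) does not substitute for them.

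\textbf{Part (2A).} Your pair-decomposition is valid but unnecessarily elaborate: the paper observes $Y_Q^K\leq 1$ almost surely (if $\vz_1\neq\vz_2$ were both counted, the two pivotal edges on a common self-avoiding path to $0$ together with paths in $D$ would produce a bypass, contradicting pivotality), after which $(Y_Q^K)^2=Y_Q^K$ and only the first moment needs bounding. Your off-diagonal bound would in any case follow from plain BK without invoking the triangle condition, since $\prob(\vz_1'\lra 0)\prob(\vz_2'\lra 0)\lesssim n^{4-2d}$ already suffices in $d\geq 11$.
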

\noindent
Using Lemma \ref{lem:yrestfirst} and the second-moment method,  if $K>K_0$ then
\begin{align*}
 \prob(Y_Q^K > 0 \mid X_Q^{K-reg} = N, B_\eta) & \geq \frac{\cons_2^2}{\Cons_2} n^{4-d} \;\forall \; N\in(\eta n^2, \eta^{-1} n^2), \text{ which implies}\\
\prob(Y_Q^K > 0 ) & \geq\sum_{\eta n^2 < N <\eta^{-1} n^2} \prob(Y_Q^K > 0 \mid X_Q^{K-reg} = N, B_\eta)\prob(X_Q^{K-reg} = N, B_\eta)  \\
& \geq \frac{\cons_2^2}{\Cons_2} n^{4-d} \prob(B_\eta) \geq \frac{\cons_1\cons_2^2}{\Cons_2} n^{2-d} \text{ using \eqref{eq:quadbd}}.
\end{align*}
 This proves \eqref{ind hyp suff}, as $\{Y_Q^K>0\}$ implies $\cup_{\vz\in Q} F_\vz$, and thus completes the proof of Claim \ref{ind hyp 1}.
\hfill \qed

We end the section by proving Lemma \ref{lem:yrestfirst}.

\begin{proof}[Proof of Lemma \ref{lem:yrestfirst}]
{\bf 1. } From the definition of $Q$ \chJH{and the symmetries of the lattice} it is not hard to see that $\#C_D(\vx)\cap\del D$ is bounded above by a sum of $d2^{d}$ copies of $X_Q$ which are identically distributed (but not independent). So, using a union bound and Lemma \ref{OneCluster}, there are constants $\eta_0(d)>0$ and $\cons(\eta_0,d)>0$ such that
\[ \prob(X_Q>2\eta n^2)\geq \frac{1}{d2^d}\prob(\#C_D(\vx)\cap \del D>d2^{d+1} \eta n^2) \geq \frac{\cons}{d2^d}n^{-2} \text{ for all } \eta\leq \eta_0.\]
Also, Lemma \ref{thm:regthm} \chJH{implies}
\[ \prob(X_Q>2\eta n^2, X^{K-reg}_Q\leq \eta n^2) \leq Cn^d\exp(-c\log^2(2\eta n^2)) \text{ for some constants $C, c>0$.}\]
Finally, using Lemma \ref{EXnbd} and \chJH{the} Markov inequality, $\prob(X_{\del D}\geq \eta^{-1} n^2) \leq \Cons_1\eta n^{-2}$.
Combining this with the last two displays,
\[ \prob(B_\eta) \geq \frac{\cons}{d2^d}n^{-2} - Cn^d\exp(-c\log^2(2\eta n^2)) - \Cons_1\eta n^{-2} \text{ for all } \eta\leq \eta_0.\]
So we get the desired result if we choose $\eta>0$ small enough and $n$ large enough.

\noindent {\bf (2A). }  First we argue that $Y^K_Q \leq 1$ a.s.~via the method of contradiction. Suppose, if possible, $\vz_1$ and $\vz_2$ are two vertices counted in $Y^K_Q$. Then $\vx \leftrightarrow 0$, so we can choose a self-avoiding open path  $\gamma$ joining $\vx$ to $0$. By pivotality, $\gamma$ must contain the edges $\{\vz_i, \vz'_i\}$ for $i=1,\,2$. Suppose (without loss of generality) that $\gamma$ passes through $\vz_2$ first when traversed from 0 to $\vx$.  Then we can find a path $\gc'\subseteq\gc$ joining 0 and $\vz_2$ such that the edge $\{\vz_1, \vz_1'\} \not\in\gc'$. On the other hand, since $\vz_2\in C_D(\vx)$, we also have a path $\gc''$ which stays entirely within $D$ and joins $\vx$ and $\vz_2$. This contradicts the fact that the edge $\{\vz_1, \vz'_1\}$ is pivotal for $\{\vx\lra 0\}$, as $\gc'\cup\gc''$ avoids the edge $\{\vz_1, \vz_1'\}$ and connects $\vx$ and 0.
Thus $Y^K_Q\leq 1$. In particular,\\
$(Y^K_Q)^2 \chJH{=} \sum_\vz \mathbf 1_{\{\vz \text{ counted in } Y_Q\}}$. 
Conditioning on the cluster of $\vx$, $\E[(Y^K_Q)^2; X_Q^{K-reg} = N, B_\eta]$ is 
\begin{align*}
&\leq \sum_{\cC \in B_\eta \cap \{X_Q^{K-reg} = N\} } \E[(Y_Q^K)^2; C_D(\vx) = \cC] \\
&\leq   \sum_{\cC \in B_\eta \cap \{X_Q^{K-reg} = N\} }\  \sum_{\substack{\vz\in Q: \,\vz \in \chJH{\cC \cap }REG_D(K)\\\text{when } C_D(\vx) =\cC}}\prob(C_D(\vx) = \cC,\, \vz \lra 0 \text{ off } \cC)
\end{align*}
\chJH{(recall that ``$\vz \lra 0$ off $\cC$'' means that there is an open path from $\vz$ to $0$ touching $\cC$ only at $\vz$).} Using \eqref{eq:twopt} and the fact that $Q\cap B(n/3)=\varnothing$, along with the independence of the above events, we see as in the proof of Lemma \ref{lem:nofurther} that the above is bounded by
\begin{align*}
&\Cr{c:twopthigh} (n/3)^{2-d} \sum_{\vz\in Q} \prob(\vz \in C_D(\vx)\cap REG_D(K), X_Q^{K-reg} = N; B_\eta)\\
&= \Cr{c:twopthigh} (n/3)^{2-d} \E X_Q^{K-reg}\mathbf 1_{\{X_Q^{K-reg} = N\}\cap B_\eta} \leq \Cr{c:twopthigh} (n/3)^{2-d}\eta^{-1}n^2 \prob(X_Q^{K-reg}=N; B_\eta).
\end{align*}
This completes the proof of (2A) of Lemma \ref{lem:yrestfirst}.

\noindent
{\bf (2B). } We will define some events that force $Y_Q^K$ to be nonzero. For $\vz \in Q$, consider the box
$\wtD_\vz = \vz - [K/2, K]^d$.
 Since $x(i)\geq 0$, $\widetilde D_\vz \subseteq B_j(n)$ when $n > K \chJH{\geq 2}$. Also note that $\widetilde D_\vz$ is at distance $K/2$ from $D$. In what follows, for a fixed $\vz\in Q$, $\tz$ will typically denote a vertex of $\widetilde D_\vz$; $N$ will also always be a value between $\eta n^2$ and $\eta^{-1} n^2$.
 Define the events
\begin{align*}
\cE_1(\vz,N) := B_\eta\cap\left\{\vx \stackrel{D}{\leftrightarrow} \vz,\, \vz \in REG_D(K), \text{ and } \chJH{X_Q^{K-reg}} = N\right\} \\
\cE_2(\vz,\tz):=\left\{\tz\sa{B(Mn)} 0 \text{ off } C_D(\vz)\right\}, \quad \cE_3(\vz,\tz) := \left\{C(\vz) \cap C(\tz) = \varnothing \right\}\ .
\end{align*}
We successively bound probabilities of the intersections of the $\cE_i$'s via a series of claims.
\begin{clam}\label{clam:e1}
 Let $c_M$ be the constant from \eqref{assume}. There is a   constant $K_0 \chJH{\geq 2}$ (depending on $c_M$) such that $\prob\left(\cE_1(\vz, N) \cap \cE_2(\vz, \tz) \right) \geq (c_M/2) n^{2-d}\prob(\cE_1(\vz,N))$ for all \chJH{$\vx$}, $K>K_0,\, n\geq 10K, \, \vz \in Q,\, \tz \in \wtD_\vz$ and $N\geq 1$.
\end{clam}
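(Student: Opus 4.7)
My plan is to condition on the realization $C_D(\vx) = \cC$ inside $\cE_1(\vz,N)$ and establish the required lower bound on the conditional probability of $\cE_2$ uniformly in such $\cC$. Since $\cE_1(\vz,N)$ is measurable with respect to $C_D(\vx)$, this reduces to showing $\prob(\cE_2(\vz,\tz) \mid C_D(\vx) = \cC) \geq (c_M/2) n^{2-d}$ for every admissible $\cC$. Moreover, because $\cE_2(\vz,\tz) = \{\tz \sa{B(Mn)} 0 \text{ off } \cC\}$ depends only on edges with both endpoints in $\Zd \setminus \cC$ and these are independent of $\{C_D(\vx) = \cC\}$, the conditional probability equals the unconditional probability of the same event.

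The geometry is crucial: since $\vz \in Q$ lies on the face of $D$ with smallest $(j{+}1)$-coordinate, $\cC \subseteq D \subseteq \{\vy \in \Zd : y(j+1) \geq z(j+1)\}$, while both $\tz$ (with $\tz(j+1) \leq z(j+1) - K/2$ by definition of $\wtD_\vz$) and $0$ (with $y(j+1) = 0 < z(j+1)$, using $z(j+1) \geq (2-\alpha(M))n > 0$ for $n \geq 10K$) lie strictly below the hyperplane $\{y(j+1) = z(j+1)\}$. Any open path from $\tz$ to $0$ in $V := B(Mn) \cap \{\vy : y(j+1) \leq z(j+1) - 1\}$ therefore avoids $\cC$ automatically, yielding
\[
\prob\bigl(\tz \sa{B(Mn)} 0 \text{ off } \cC\bigr) \;\geq\; \prob(\tz \sa{V} 0) \;\geq\; \tau_{B(Mn)}(\tz, 0) - \prob\bigl(\tz \sa{B(Mn)} 0 \text{ through some } \vu \in \cC\bigr).
\]

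The inductive hypothesis \eqref{assume} applied to $\tz \in B_j(n)$ (valid since $\wtD_\vz \subseteq B_j(n)$ for $K \leq n/10$) gives $\tau_{B(Mn)}(\tz, 0) \geq c_M \|\tz\|^{2-d}$. For the excess I will use BK to bound it by $\sum_{\vu \in \cC}\tau(\tz,\vu)\tau(\vu,0)$. Every $\vu \in \cC \subseteq D$ satisfies $\|\vu\| \geq (2-\alpha(M))n$ (since $\|\vx\|_\infty > n$), so $\tau(\vu,0) \leq C n^{2-d}$, and the task reduces to bounding $\sum_{\vu \in \cC}\|\tz - \vu\|^{2-d}$. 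Here the deterministic consequence of $K$-regularity is essential: if $\#[\cC \cap (\vz + B(s))] \geq s^4 \log^7 s$ for some $s \geq K$, then $\cT_s(\vz)$ would fail deterministically, contradicting $\vz \in REG_D(K)$. Hence $\#[\cC \cap (\vz + B(s))] < s^4 \log^7 s$ for every $s \geq K$. Combined with $\|\tz - \vu\| \geq K/2$ for every $\vu \in \cC \subseteq D$ (since $\wtD_\vz$ is at $\ell^\infty$-distance $K/2$ from $D$), a dyadic decomposition yields $\sum_{\vu \in \cC}\|\tz - \vu\|^{2-d} \leq C' K^{6-d}(\log K)^7$, which decays as $K \to \infty$ because $d \geq 11 > 6$.

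The main obstacle is to arrange the constants. Combining these bounds,
\[
\prob\bigl(\cE_2 \mid C_D(\vx) = \cC\bigr) \;\geq\; c_M \|\tz\|^{2-d} - C'' K^{6-d}(\log K)^7 \, n^{2-d}.
\]
Using the crude geometric estimate $\|\tz\|^{2-d} \geq c_{M,d}\, n^{2-d}$ (from $\|\tz\|_\infty \leq \alpha(M)\, n$) and choosing $K_0 = K_0(c_M, M, d)$ large enough that $C'' K_0^{6-d}(\log K_0)^7 \leq c_M c_{M,d}/2$ gives $\prob(\cE_2 \mid C_D(\vx) = \cC) \geq (c_M c_{M,d}/2) n^{2-d}$; absorbing the geometric factor $c_{M,d}$ into $c_M$ recovers the stated form. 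The high dimensionality $d > 6$ is essential both for the sums defining the BK excess to converge and for the excess to decay with $K$.
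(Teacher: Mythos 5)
Your proposal follows essentially the same route as the paper's proof of Claim \ref{clam:e1}: condition on the realization $\cC$ of the restricted cluster (on $\cE_1$ this is $C_D(\vz)=C_D(\vx)$), apply the inductive hypothesis \eqref{assume} to lower-bound $\prob(\tz \sa{B(Mn)} 0)$, and use a BK union bound together with $K$-regularity to show the inclusion--exclusion excess is $O(K^{6-d}\log^7 K)\,n^{2-d}$, which is negligible for $K$ large. There are two minor points worth flagging. First, the middle inequality in your chain,
$\prob(\tz \sa{V} 0) \geq \tau_{B(Mn)}(\tz,0) - \prob(\tz \sa{B(Mn)} 0 \text{ through some } \vu\in\cC)$,
is false as stated: a path from $\tz$ to $0$ in $B(Mn)$ that leaves the half-space $V=B(Mn)\cap\{y(j+1)\le z(j+1)-1\}$ need not meet $\cC$, so subtracting only the ``through $\cC$'' contribution does not recover $\prob(\tz\sa{V}0)$. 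Fortunately the two endpoints of your chain do satisfy the desired inequality directly (if every open path from $\tz$ to $0$ in $B(Mn)$ meets $\cC$, then some $\vu\in\cC$ satisfies $\{\tz\lra\vu\}\circ\{\vu\lra 0\}$), so simply drop the $V$-intermediate — the paper does not use any such half-space at all. Second, you bound $\#[\cC\cap(\vz+B(s))]$ deterministically (if $\ge s^4\log^7 s$ then $\cT_s(\vz)$ is impossible on $\{C_D(\vz)=\cC\}$, contradicting $\vz\in REG_D(K)$), whereas the paper bounds the conditional expectation $\E[\#C(\vz)\cap(\vz+B(s))\mid C_D(\vz)=\cC]$ using the small conditional probability of $\cT_s(\vz)^c$. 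Both are valid and yield the same $K^{6-d}\log^7 K$ scaling; yours is a shade more direct. Your absorption of the geometric factor into $c_{M,d}$ is also fine and, if anything, more scrupulous than the paper, which writes the first term as $\ge c_M n^{2-d}$ rather than $\ge c_M\|\tz\|^{2-d}\ge c_M\alpha(M)^{2-d}n^{2-d}$; this costs only a dimension- and $M$-dependent constant and is harmless since Lemma \ref{lem:yrestfirst} only needs a bound of the form $c\,n^{2-d}$.
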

\begin{figure}
\centering
(a)\includegraphics[width=7.8cm, height=6cm]{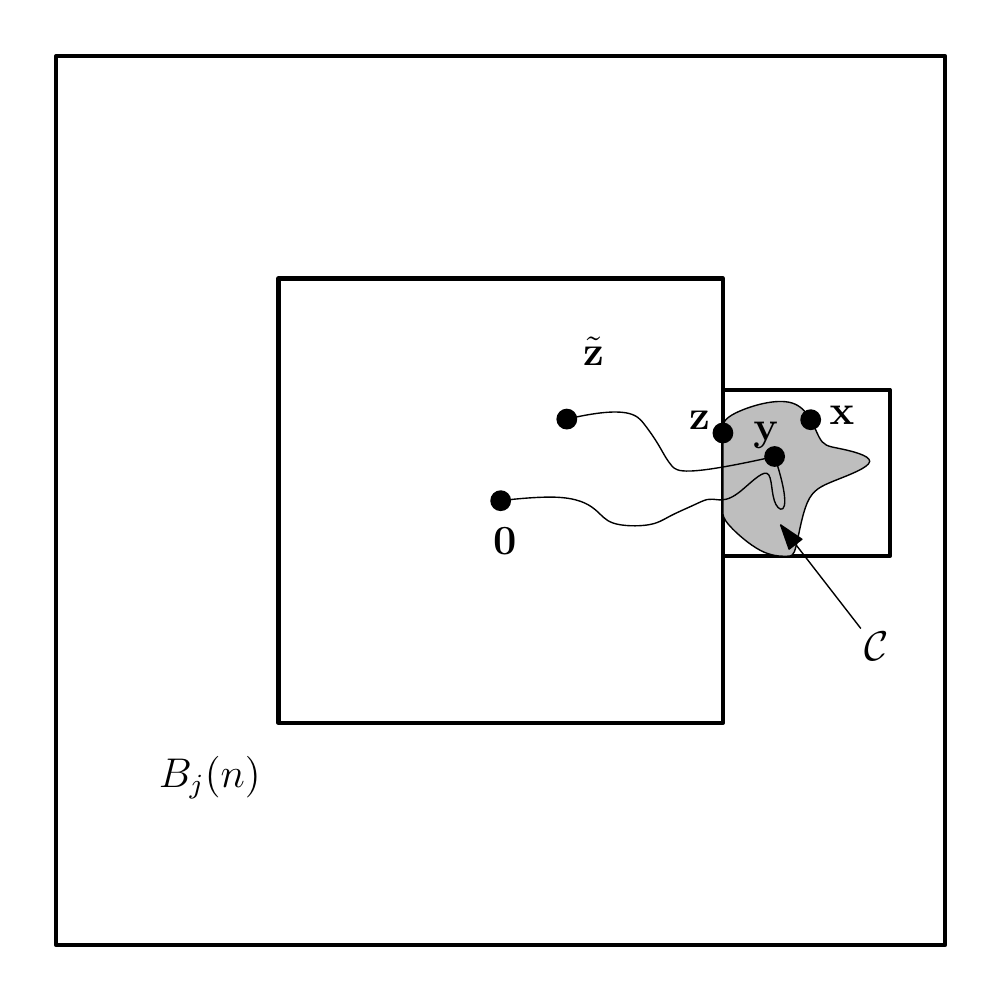}\hfill
(b)\includegraphics[width=7.8cm, height=6cm]{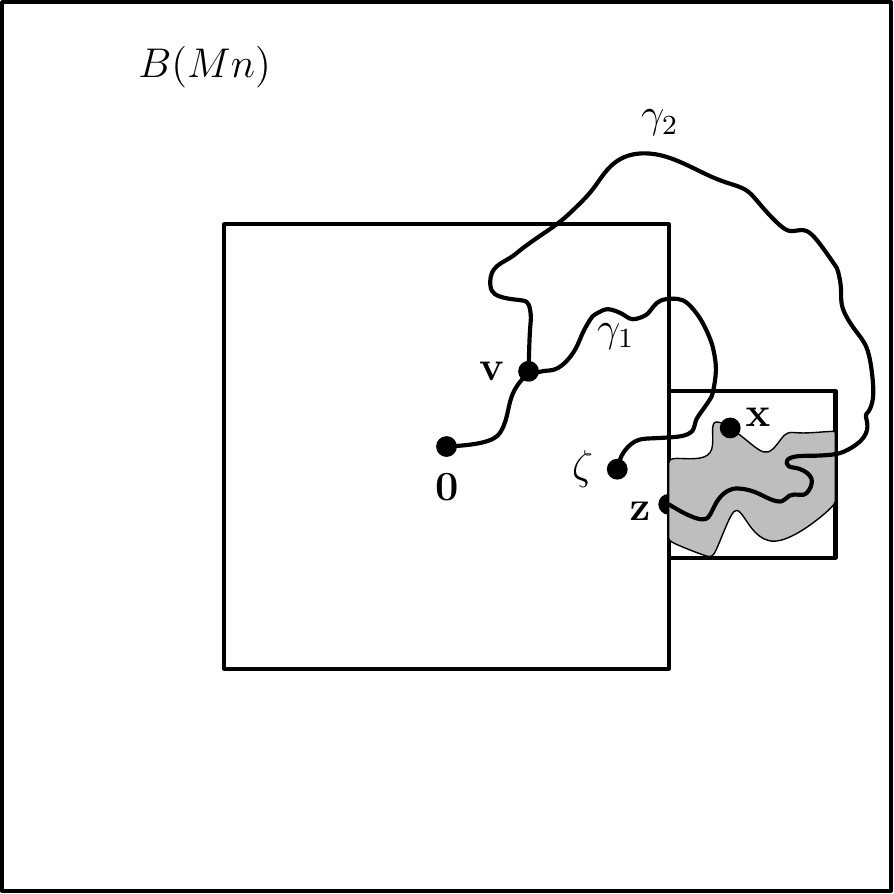}
\caption{(a) The event $\{\tz \lra \vy \} \circ \{\vy \lra 0\}$ for $\vy \in \cC$ as in the proof of Claim \ref{clam:e1}. The shaded region represents $C_D(\vx)$. (b) Bounding a cluster intersection event in the proof of Claim \ref{clam:e2}. Depicted is the event  $\{\vx \leftrightarrow \vv; \cE_1(\vz,N)\} \circ \{\zeta~\leftrightarrow~\vv\} \circ \{\vv \leftrightarrow 0\}$.}
\label{Fig: Eqn27 and 30}
\end{figure}
Note that for any realization $\cC$ of $C_D(\vz)$ satisfying $\cE_1(\vz,N)$,  $\prob(\cE_2(\vz,\tz) \mid C_D(\vz)=\cC)$ equals
 \beq\label{E2bd}
 \prob(\tz \stackrel{B(Mn)}{\longleftrightarrow} 0 \text{ off } \cC)
 \chJH{\geq} \prob(\tz \sa{B(Mn)} 0)  - \prob(\cup_{\vy\in\cC}\{\tz \lra \vy\}\circ \{\vy\lra 0\}). \eeq
 See Figure \ref{Fig: Eqn27 and 30}(a) for a sketch. Using  \eqref{assume} and recalling that $\tz\in B_j(n)$, the first term in the RHS of \eqref{E2bd} is $\geq  c_M n^{2-d}$. Using a union bound and the BK inequality, \eqref{eq:twopt} and the fact that $\cC\subseteq \chJH{(B(n/3))^c}$ (see  \eqref{D prop}), the second term in the RHS of \eqref{E2bd}  is  $\leq \Cr{c:twopthigh} (n/3)^{2-d} \sum_{\vy \in \cC} \prob(\tz \lra \vy)$. From \eqref{E2bd} and the last two observations,
 \beq\label{E2bd1}
 \prob(\cE_2(\vz,\tz)\mid C_D(\vz)=\cC) \geq c_Mn^{2-d}-\Cr{c:twopthigh}(n/3)^{2-d} \sum_{\vy\in \cC} \prob(\tz\lra \vy).\eeq
In order to estimate the sum in \eqref{E2bd1}, let
 $\chJH{U_r} :=\tz+Ann(2^r, 2^{r+1})$ for $r\geq 0$. So $\prob(\tz\lra \vy) \leq \Cr{c:twopthigh} 2^{r(2-d)}$ for all $\vy\in U_r$, which gives  $\sum_{\vy \in \cC} \prob(\tz \lra \vy) \leq \sum_{r\geq\log_2(K/2)}  \Cr{c:twopthigh} 2^{r(2-d)}(\#\cC\cap U_r)$. Since $\|\vz-\tz\| \leq K$, we have $U_r\subseteq \vz+B(2^{r+2})$ for all $r\geq\log_2(K/2)$. Hence, whenever $\cC$ satisfies $\cE_1(\vz,N)$, we have
 \begin{align*}
 \#\cC \cap U_r &\leq \E[\#C(\vz)\cap(\vz+B(2^{r+2}))\mid C_D(\vz)=\cC] \\
 &\leq 2^{4(r+2)}\log^7(2^{r+2}) + 2^{(r+4)d}\prob(\chJH{\cT_{2^{r+2}}(\vz)^c} \mid C_D(\vz) = \cC)\\
 &\leq C 2^{4r} \log^{7}(2^r) 
 \end{align*}
for all $r\geq\log_2(K/2)$, where $C$ is independent of $r$ and $K$ (as long as $K$ is large). In the above, we have used the definition of $K$-regularity and Lemma \ref{thm:regthm}.
 This implies $\sum_{\vy\in\cC} \prob(\tz\lra \vy)$\linebreak $\leq c_1\sum_{r\geq\log_2(K/2)}(r^72^{r(6-d)}) \leq c_2K^{6-d}\log^7K$ for some constants $c_1,\,c_2$ (independent of $K$ and $n$). Using this bound and \eqref{E2bd1}, we see that if $K$ is large enough then
 $\prob(\cE_2(\vz,\tz)\mid C_D(\vz)=\cC) \geq (c_M/2)n^{2-d}\mathbf 1_{\{\cC\in\cE_1(\vz,N)\}}$. 
 Taking an expectation over $C_D(\vz)$ completes the proof of Claim \ref{clam:e1}. 
 
 Having proved Claim \ref{clam:e1}, we move on to the next subsidiary claim, which deals with  $\cE_3$.

\begin{clam} \label{clam:e2}
Let $c_M$ be the constant from \eqref{assume}. There is a constant $K_1 > K_0$ (depending on $c_M$) such that for all $\chJH{\vx}$, $K\geq K_1, n\geq 10K$ and $\vz \in Q$, we can find a $\tz \in \widetilde D_\vz$ satisfying
$\prob(\cE_1(\vz,N) \cap \cE_2(\vz,\tz) \cap \cE_3(\vz,\tz)) \geq (c_M/4) n^{2-d} \prob(\cE_1(\vz,N))$.
\end{clam}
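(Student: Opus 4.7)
The plan is to deduce Claim \ref{clam:e2} from Claim \ref{clam:e1} by establishing a matching upper bound
\[ \prob(\cE_1 \cap \cE_2 \cap \cE_3^c) \leq (c_M/4)\, n^{2-d}\, \prob(\cE_1)\ ; \]
in fact, I will prove this uniformly in $\tz \in \widetilde D_\vz$ once $K$ is chosen sufficiently large in terms of $c_M$, so that the conclusion holds for every such $\tz$ and subtracting from Claim \ref{clam:e1} closes the argument. The geometric fact making this work is that $\widetilde D_\vz \cap D = \varnothing$: since $\tz = \vz - \vec r$ with $\vec r \in [K/2,K]^d$, we have $\tz(j+1) = z(j+1) - r_{j+1} < z(j+1) = x(j+1) - (\alpha(M)-1)n$, whereas $D$ consists of $\vy$ with $y(j+1) \geq z(j+1)$. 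Hence $\tz \notin C_D(\vz)$ and $\|\tz-\vy\| \geq K/2$ for every $\vy \in D$; consequently on $\cE_3^c$ there exists $\vy \in C_D(\vz)$ connected to $\tz$ by an open path lying off $C_D(\vz) \setminus \{\vy\}$.

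Conditioning on $C_D(\vz) = \cC$, edges with neither endpoint in $\cC$ remain i.i.d.\ $\mathrm{Bernoulli}(p_c)$, so the tree-graph (Aizenman--Newman) inequality applied in the conditional model yields
\[ \prob\bigl(\tz \lra \vy \text{ off } \cC,\ \tz \lra 0 \text{ off } \cC \,\big|\, C_D(\vz) = \cC\bigr) \leq \sum_\vu \tau(\tz,\vu)\,\tau(\vu,\vy)\,\tau(\vu,0)\ . \]
A union bound over $\vy \in \cC$ and integration against $\mathbf{1}_{\cE_1}$ (which is measurable with respect to $C_D(\vz)$) give
\[ \prob(\cE_1 \cap \cE_2 \cap \cE_3^c) \leq \E\Bigl[\mathbf{1}_{\cE_1}\sum_{\vy \in C_D(\vz)}\sum_\vu \tau(\tz,\vu)\,\tau(\vu,\vy)\,\tau(\vu,0)\Bigr]\ . \]
Splitting the $\vu$-sum on whether $\vu \in B(n/2)$, using $\tau(\vu,\vy) \leq Cn^{2-d}$ in the first case (as $\|\vy\| \asymp n$ for $\vy \in D$) and $\tau(\vu,0) \leq Cn^{2-d}$ in the second, together with the convolution estimate $\sum_\vu \tau(\va,\vu)\tau(\vu,\vb) \leq C\|\va-\vb\|^{4-d}$ used analogously in Section \ref{sec:rtwopt}, then gives
\[ \sum_\vu \tau(\tz,\vu)\,\tau(\vu,\vy)\,\tau(\vu,0) \leq C n^{6-2d} + C n^{2-d}\,\|\tz-\vy\|^{4-d}\ . \]

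It remains to bound the two resulting expectations using the $K$-regularity of $\vz$ built into $\cE_1$. The first is at most $Cn^{6-2d}\,\E[\#C_D(\vz)\,\mathbf{1}_{\cE_1}]$; applying $K$-regularity at scale $\asymp n$ shows $\E[\#C_D(\vz)\mid\cE_1] \leq C n^4\log^7 n$, so this contribution is $Cn^{10-2d}\log^7 n\cdot\prob(\cE_1)$, which is $o(n^{2-d}\prob(\cE_1))$ for $d \geq 11$. The second term is the main obstacle: because $\vy$ may be as close as $K/2$ to $\tz$, where $\|\tz-\vy\|^{4-d}$ is largest, the crude bound $\|\tz-\vy\|^{4-d} \leq Cn^{4-d}$ throws away the gain in $K$ that I need. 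Instead, I will decompose $C(\vz)$ dyadically into shells $\tz + Ann(2^r,2^{r+1})$; only shells with $2^r \geq K/2$ can contain any $\vy$, and $K$-regularity at each such scale bounds the expected shell size by $C(2^r)^4\log^7(2^r)$, yielding
\[ \E\Bigl[\sum_{\vy \in C_D(\vz)}\|\tz-\vy\|^{4-d}\,\Big|\,\cE_1\Bigr] \leq C\!\!\!\sum_{r \gtrsim \log_2 K}\!\!\! (2^r)^{8-d}\,r^7 \leq C\, K^{8-d}\log^7 K\ , \]
a convergent geometric series since $d > 8$ that tends to $0$ as $K \to \infty$. Choosing $K_1 \geq K_0$ sufficiently large in terms of $c_M$ makes each of the two contributions at most $(c_M/8)\,n^{2-d}\,\prob(\cE_1)$, and the claim follows.
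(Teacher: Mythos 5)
Your argument is correct, and it takes a genuinely different route from the paper's. The paper's proof of Claim~\ref{clam:e2} chooses $\zeta$ uniformly at random in $\widetilde D_\vz$, decomposes $(\cE_1 \cap \cE_2) \setminus \cE_3$ via a disjointly-occurring triple $\{\vx \lra \vv;\cE_1\}\circ\{\zeta \lra \vv\}\circ\{\vv\lra 0\}$, applies BK, and then crucially averages over $\zeta$ to control the contribution from shells $\vv \in \zeta + Ann(2^{t-1},2^t)$ with $2^t \lesssim K$, where $\|\zeta - \vv\|^{2-d}$ can be $O(1)$. Your proof sidesteps the randomization entirely: because the union bound is over $\vy \in C_D(\vz) \subseteq D$ and $\widetilde D_\vz$ is separated from $D$ by $\ell^\infty$-distance at least $K/2$ (your observation on the $(j{+}1)$-coordinates), every $\vy$ is at distance $\geq K/2$ from $\tz$, so the small-shell problem never arises and the estimate holds uniformly in $\tz \in \widetilde D_\vz$. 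You also replace the BK/pivot-vertex argument with a direct application of the Aizenman--Newman tree-graph inequality (after discarding the ``off $\cC$'' restrictions, which is all you need since you only want an upper bound). The trade-off is a weaker shell exponent: your error term is $O(K^{8-d}\log^7 K)$ versus the paper's $O(K^{6-d}\log^7 K)$, so your approach requires $d > 8$ where the paper's works for $d > 6$. Since the paper works with $d \geq 11$, both are sufficient, but yours would not extend to a hypothetical $7 \leq d \leq 8$ regime.

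Two minor points. First, your justification of the convolution split for $\vu \in B(n/2)$ is imprecise: ``$\|\vy\|\asymp n$'' alone does not give $\tau(\vu,\vy) \leq Cn^{2-d}$, since $\vu \in B(n/2)$ and $\vy$ at norm slightly above $n/3$ can be close. What you actually need (and what is true) is that $y(j{+}1) \geq z(j{+}1) > (2-\alpha(M))n \geq 2n/3$ for $\vy \in D$ while $|u(j{+}1)| \leq n/2$, so $\|\vu - \vy\| \geq n/6$. Second, your phrase ``expected shell size'' is slightly misleading: on $\cE_1$ (which includes $\vz\in REG_D(K)$), the quantity $\#\bigl(C_D(\vz) \cap (\vz + B(s))\bigr) < s^4\log^7 s$ holds \emph{deterministically} for $s \geq K$, since this quantity is $C_D(\vz)$-measurable and $\cT_s(\vz)$ has positive conditional probability. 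This is actually slightly stronger than what you wrote and is the cleanest way to phrase the step.
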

Claim \ref{clam:e2} will follow if we show
that there is a constant $K_1 > K_0$ such that, for any $z\in Q$, if $\zeta$ denotes a uniformly chosen random vertex in $\wtD_\vz$ and if $E_\zeta$ denotes expectation over $\zeta$, 
then 
\begin{equation} \label{eq:zetaavg}
E_\zeta \prob(\cE_1(\vz,N) \cap \cE_2(\vz,\zeta) \cap \cE_3(\vz,\zeta)) \geq (c_M/4)  n^{2-d}\prob(\cE_1(\vz,N)) \text{ for all  $N$ and $K\geq K_1$}. 
\end{equation}
Fix $\vz\in Q$ and $\zeta\in\wtD_z$. Consider the event $(\cE_1(\vz,N) \cap \cE_2(\vz,\zeta)) \setminus \cE_3(\vz,\zeta)$. On this event, 
we can find a self-avoiding open path $\gc_1$ joining $\zeta$ and 0 and avoiding $C_D(\vz)$, then subsequently find a path $\gc_2$ starting at $\vz$ and terminating at its first and only intersection point with $\gc_1$. So if $\vv\in\gc_1\cap\gc_2$  is the \chJH{unique such intersection point of $\gc_1$ and $\gc_2$}, then the event
$\{\vx \leftrightarrow \vv; \cE_1(\vz,N)\} \circ \{\zeta \leftrightarrow \vv\} \circ \{\vv \leftrightarrow 0\}$
occurs (see Figure \ref{Fig: Eqn27 and 30}(b) for a sketch). So, using the union bound, the BK inequality, \eqref{eq:twopt}  and the convention $0^{2-d}=1$,
\begin{align}
 &\prob\left((\cE_1(\vz,N) \cap \cE_2(\vz,\zeta)) \setminus \cE_3(\vz,\zeta)\right) 
  \leq \Cr{c:twopthigh}^2\sum_{\vv \in B(n/100)} \prob\left(\vx \leftrightarrow \vv; \cE_1(\vz,N) \right)\, \|\zeta - \vv\|^{2-d} \,   \|\vv\|^{2-d} \nonumber\\
  & \quad  + \Cr{c:twopthigh}^2\sum_{\vv \notin B(n/100)} \prob\left(\vx \leftrightarrow \vv; \cE_1(\vz,N) \right)\, \|\zeta - \vv\|^{2-d} \,   \|\vv\|^{2-d}=: I_1+I_2\ .
\label{eq:sumvforzeta}
\end{align}
We bound $E_\zeta I_1$ and $E_\zeta I_2$ uniformly in $K$ large, and in $n$ large relative to $K$. First consider $I_1$. If $n \geq 10K$, then using the triangle inequality $\|\zeta - \vv\| \geq \|\vz\|-\|\vz-\zeta\|-\|\vv\|\geq n/2$ for each $\vv\in B(n/100)$.  
Also, 
\[ \prob\left(\vx \leftrightarrow \vv; \cE_1(\vz,N) \right) = \sum_{\cC\in\cE_1(\vz,N)} \prob\left( C_D(\vx) = \cC\right) \prob\left(\vx\leftrightarrow \vv \mid C_D(\vx) = \cC \right).\]
If $\{\vx \leftrightarrow \vv\}$ occurs, then there must be some $\vw\in C_D(\vx)\cap \del D$ such that $\{\vw \leftrightarrow \vv \text{ off } C_D(\vx)\}$ occurs. In particular, using \eqref{eq:twopt} and the fact that $\|\vv-\vw\|\geq  n/2$ for  all $\vw\in \partial D$,
\begin{align*}
  \prob\left(\vx\leftrightarrow \vv \mid C_D(\vx) = \cC \right)  \leq \sum_{\vw\in \cC\cap \del D} \prob(\vw \leftrightarrow \vv)\leq \Cr{c:twopthigh}(n/2)^{2-d} X_{\del D} \leq \Cr{c:twopthigh} \eta^{-1} n^2 (n/2)^{2-d}
\end{align*}
for all $\cC$ satisfying $\cE_1(\vz,N)$.
Pulling the above bounds together and summing over $\cC$ and $\vv$,
\beq
I_1 \leq c_1 \prob(\cE_1(\vz,N)) n^{6-2d} \sum_{\vv \in B(n/100)} \|\vv\|^{2-d}
\leq c_2 \prob(\cE_1(\vz,N)) n^{8-2d} 
\eeq
uniformly in $\zeta$, for some constants $c_1,\,c_2$ (independent of $K$ and $n$).

To control $I_2$, we bound $\|\vv\|^{2-d}$ uniformly by $(n/100)^{2-d}$.  Define $\cC_{\zeta,t} := C(\vx) \cap[ \zeta +  Ann(2^{t-1},2^t)]$ for $t\geq 0$ and $t_K := \log_2 (4K)$.
Since $\|\zeta - \vv\| \geq 2^{t-1}$ when $\vv \in \cC_{\zeta,t}$, 
\begin{align}
  I_2 & \leq C (n/100)^{2-d} \sum_{t=0}^\infty \sum_{\vv \in \zeta + Ann(2^{t-1},2^t)} \prob(\vx\leftrightarrow \vv; \cE_1(\vz,N)) \|\zeta - \vv\|^{2-d}\label{eq:almostsame}\\ &\leq C \left(\sum_{t\geq t_K}\,  2^{2t-dt} \E[\#\cC_{\zeta,t};\, \cE_1(\vz,N)] \right.
  \left.+  \sum_{\substack{t< t_K \\\vv \in \zeta + Ann(2^{t-1},2^t)}} \prob(\vx\leftrightarrow \vv; \cE_1(\vz,N)) \|\zeta - \vv\|^{2-d} \right) n^{2-d} \nonumber \\
  &=: I_{21}+I_{22}
\nonumber
\end{align}
for some constant $C>0$. To bound $I_{21}$ note that $\cC_{\zeta,t}\subseteq \vz+B(2^{t+1})$ for all $t\ge t_K$ and $\zeta\in \wtD_\vz$, so using Lemma \ref{thm:regthm} and discarding a negligible contribution from the event \chJH{$\cT_{2^{t+1}}(\vz)^c$} as before, there is a constant $C$ independent of $n$ and (sufficiently large) $K$ such that
\beqax 
\E[\#\cC_{\zeta,t}; \cE_1(\vz,N)] 
&=& \sum_{\cC\in\cE_1(\vz,N)}\E[\#C(\vz)\cap(\vz+B(2^{t+1}))\mid C_D(\vz)=\cC] \prob(C_D(\vz)=\cC) \\
&\leq&  C \prob(\cE_1(\vz,N)) 2^{4t}\log^7(2^{t}) \ ,  \text{ which implies} \\
I_{21} &\leq& C_3 \chJH{\prob(\cE_1(\vz,N))} n^{2-d}\sum_{t\geq t_K} t^72^{t(6-d)} \leq C_4 \chJH{\prob(\cE_1(\vz,N))}  n^{2-d}K^{6-d}\log^7K 
\eeqax
where the $C_i$'s are constants independent of \chJH{$\vx$}, of $K$ sufficiently large, of $\chJH{\vz}$ and $\zeta$, and of $n$ large relative to $K$.
 We turn now to estimating $E_\zeta(I_{22})$. Consider the expectation $E_\zeta$ of the inner sum for a typical value of $t \leq t_K$. 
 \begin{align*}
 &E_\zeta\sum_{\vv\in\chJH{\zeta + Ann(2^{t-1},2^t)}}\prob(\vx\lra \vv;\cE_1(\vz,N))\|\zeta-\vv\|^{2-d}
 =\chJH{\left[\#\wtD_\vz \right]^{-1}}\sum_{\zeta\in\wtD_\vz}\sum_{\vv\in\chJH{\zeta + Ann(2^{t-1},2^t)}}\prob(\vx\lra \vv;\cE_1(\vz,N))\|\zeta-\vv\|^{2-d} \\
& \leq \chJH{ CK^{-d}}\sum_{\vv\in \cup_\zeta \chJH{\zeta + Ann(2^{t-1},2^t)}}  \prob(\vx\lra \vv;\cE_1(\vz,N))  \left[ \sum_{\zeta\in\wtD_\vz: |\zeta-\vv|>K} K^{2-d}
+  \sum_{l=1}^K\sum_{\zeta\in\wtD_\vz: \|\zeta-\vv\|_{\infty}	=l}
 l^{2-d}\right] \\
 & \leq \chJH{C K^{-d}}\sum_{\vv\in \cup_\zeta \chJH{\zeta + Ann(2^{t-1},2^t)}}  \prob(\vx\lra \vv;\cE_1(\vz,N))  
 \left[(K/2)^dK^{2-d} + \sum_{l=1}^K 2dl^{d-1}l^{2-d}\right] \\
 & \leq C_5 K^{2-d}\E\left[\chJH{\#\left(\cup_{\zeta\in \wtD_\vz} \cC_{\zeta,t}\right)};\cE_1(\vz,N)\right] \text{ for some constant $C_5$}.
 \end{align*}
 Note that $\cC_{\zeta,t}\subseteq \vz+B(5K)$ for all $t\leq t_K$ and $\zeta\in\wtD_\vz$, as $\|\zeta-\vz\|\leq K$. Therefore, the~above~is
 \begin{align}
   &\leq C_5 K^{2-d}\E[\#C(\vz) \cap(\vz+B(5K)); \cE_1(\vz,N)]\nonumber\\
   &= C_5 K^{2-d}\sum_{\cC\in\cE_1(\vz,N)}\prob(C_D(\vz)=\cC)\nonumber \E[\#C(\vz) \cap(\vz+B(5K))\mid C_D(\vz)=\cC]\\  
  &\leq C_5 K^{2-d}(5K)^4\log^7(5K) \prob(\cE_1(\vz,N)) + C_6 K^{2-d} K^{d} e^{-t_K^2} \prob(\cE_1(\vz,N))\ , \label{eq:randomizedvert}
 \end{align}
 again using $K$-regularity.
 
 The second term of \eqref{eq:randomizedvert} is negligible, which implies $E_\zeta(I_{22})  \leq \allowbreak C_7 K^{6-d}\log^8(5K) n^{2-d} \prob(\cE_1(\vz,N))$
  for some constant \chJH{$C_7$}.
 Inserting our estimates for $I_1, I_{21}$ and $E_\zeta(I_{22})$ in  \eqref{eq:sumvforzeta}, we bound $E_\zeta \prob([\cE_1(\vz,N) \cap \cE_2(\vz,\zeta)]\allowbreak \setminus\allowbreak \cE_3(\vz,\zeta))$. Using this bound, the LHS of \eqref{eq:zetaavg}  is \chJH{at least
\begin{equation}\label{eq:touselaterman} E_\zeta\left[ \prob(\cE_1(\vz,N) \cap \cE_2(\vz,\zeta))  \right] - C_8 n^{2-d}K^{6-d}\log^8(5K)\allowbreak\prob(\cE_1(\vz,N))\end{equation}} for some constant $C_8$.
Choosing $K$ large enough and applying Claim \ref{clam:e1},  \eqref{eq:zetaavg} is established.  This finishes the proof of Claim \ref{clam:e2}.

We now move to complete the proof of (2B) of Lemma \ref{lem:yrestfirst}. Suppose we have a pair $(\vz, \tz)$, where $\vz\in Q$ and $\tz\in\wtD_\vz$, as in Claim \ref{clam:e2}. We claim that there is a constant $c_9 = c_9(K)>0$ such that
\begin{equation}\label{eq:modmapped}
  \prob\left(\vz \text{ is counted in } Y^K_Q;\, X_Q^{K-reg} = N, B_\eta \right) \geq \chJH{c_9} \prob(\cE_1(\vz,N)\cap \cap_{i=2}^3 \cE_i(\vz,\tz))\ .\end{equation}
  
 The argument for \eqref{eq:modmapped} is a usual edge modification argument, which we now sketch. We define a function $\Upsilon$ mapping each edge configuration $\omega \in \cE_1(\vz,N)\cap \cap_{i=2}^3 \cE_i(\vz,\tz)$ to a new edge configuration $\Upsilon(\omega)$ as follows. Consider such an outcome $\omega$, with $\tz$ chosen as in Claim \ref{clam:e2}. We can choose according to some deterministic search algorithm a path $\pi$ of open edges from $\tz$ to $0$ lying entirely in $B(Mn)$. Since $C(\vz)$ and $C(\tz)$ are disjoint, this path is guaranteed not to intersect $C(\vz)$. Now, we close all edges having an endpoint in the box $[\vz + B(4K)] \setminus D$, except those edges belonging to $\pi$; we then open $\{\vz, \vz'\}$. Last, we open one-by-one the edges in a path from $\vz'$ to $\pi$ which lies entirely in $[\vz + B(3K)] \setminus [\vx + B((\alpha(M)-1)n + 1)]$ (i.e., the set $D$ widened by one unit) except for its initial vertex $\vz'$.

 It is easy to see that the above procedure connects $\vz$ to $0$ within $B(Mn)$ but that every open path from $\vz$ to $0$ must pass through $\vz'$. Because, in the outcome $\omega$, $\vz$ was in $C_D(\vz) \cap REG_D(K)$ and $B_\eta \cap \{X_Q^{K-reg} = N\}$ initally occurred, and since no edges of $D$ were modified by $\Upsilon$, these facts still hold true for $\Upsilon(\omega)$. Lastly, since the function is at most $e^{CK^d}$-to-one, the probability of the image $\Upsilon(\cE_1 \cap \cE_2 \cap \cE_3)$ is at least $c(K) \prob(\cE_1 \cap \cE_2 \cap \cE_3)$.
 
 Given \eqref{eq:modmapped}, the conclusion of the proof is immediate. Summing \eqref{eq:modmapped} over $\vz$, we find
 \[ \E[Y_Q^K; X_Q^{K-reg} = N; B_\eta] \geq c  \sum_{\vz} \prob(\cE_1(\vz,N)\cap \cap_{i=2}^3 \cE_i(\vz,\tz))\ . \]
 Using Claim \ref{clam:e2}, the probability appearing on the right-hand side is at least $c n^{2-d} \prob(\cE_1(\vz, N))$ when $\tz$ is chosen appropriately.
 Now, on $B_\eta \cap \{X_{Q}^{K-reg} = N\}$, there are $N$ vertices $\vz$ such that $\cE_1(\vz, N)$ occurs; since $N > \eta n^2$, this completes the proof.

\end{proof}

\section{Proof of Theorem \ref{thm:mainextend}}\label{sec:proveext}
 
We will prove only \eqref{eq:outextend}, since \eqref{eq:rectextend} has a very similar proof, and since both \eqref{eq:outextend} and \eqref{eq:rectextend} are harder than \eqref{eq:inextend} (involving, in particular, the restricted cluster appearing in $\mathsf{A}^{out}$). For the purpose of abbreviation, we write $X(m)$ for $X_{S'(m)}(B_H(m),0)$ throughout this section only, and similarly set $X^{K-irr}(m) = X_{S'(m)}^{K-irr}(B_H(m),0)$ and $REG(K,m) = REG_{B_H(m)}(K)$ (recall Definition \ref{RegDef}).

Although some parts of the  arguments here are similar to that of 
  Section \ref{sec:rtwopt}, there are many differences in the details. We will need to build extensions of spanning clusters of large boxes, involving a number of parameters. The statements that follow will provide various bounds that are uniform in $n$ sufficiently large with $n \leq k \leq 2n$, $n^{1/10} \leq L \leq 3n-k$, and $M \geq L^2/2$. The main restriction on $n$ will come from it having to be very large relative to the regularity parameter $K$, which will be fixed relative to all other parameters but larger than some dimension-dependent constant.

We say a pair of vertices $(\vz,\vy)$ is $(k,L, K)$-admissable if
\begin{enumerate}
\item $\vz \in S'(k)$ and $\vy \in (\vz + B_H(L)) \setminus B_H(k)$;
\item $\vz \in REG(K,k)$;
\item $0 \stackrel{B_H(k)}{\longleftrightarrow} \vz$;
\item $\vz \stackrel{\Anns(n/2,4n)}{\longleftrightarrow} \vy$;
\item The status of the edge $\{\vz, \vz'\}$ is pivotal for the event $0 \leftrightarrow \vy$, where $\vz'$ is a deterministically chosen neighbor of $\vz$ in $[\vz+B_H(K)] \setminus B_H(k)$.
\end{enumerate}

Define the random number of admissable pairs
\[Y(k, L, K) = \#\left\{(\vz,\vy): \, (\vz,\vy) \text{ is $(k,L,K)$-admissable} \right\} \ . \]
Let $X^{K-reg}(k)  = X(k) - X^{K-irr}(k) = \# REG(K,k)$.
The argument will follow from the second-moment method, using the bounds in the following pair of lemmas, followed by a local modification argument similar to that in the proof of Lemma \ref{lem:yrestfirst}.
\begin{lem}
\label{thm:mombd1}
 Let $K$ be fixed larger than some dimension-dependent constant. There exists a constant $c = c(K) > 0$ such that
\begin{align}
\E Y(k, L, K) \indi_{X^{K-reg}(k) = M} \geq c M L^2 \prob(X^{K-reg}(k) = M)\ ,
\label{eq:mombd1}
\end{align}
uniformly in $n$ large (relative to $K$), for $n \leq k \leq 2n$, $n^{1/10} \leq L \leq 3n-k$, and $M \geq L^2 / 2$.
\end{lem}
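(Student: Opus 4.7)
The plan is to follow the template of the proof of Lemma \ref{lem:yrestfirst}(2B), adapted to the half-space geometry. Expanding
\[\E Y(k,L,K)\indi_{X^{K-reg}(k)=M}=\sum_{(\vz,\vy)}\prob\bigl((\vz,\vy)\text{ admissable},\,X^{K-reg}(k)=M\bigr),\]
we will bound each summand below by $c(K)\,\|\vz'-\vy\|^{2-d}\prob(X^{K-reg}(k)=M)$. Summing $\|\vz'-\vy\|^{2-d}$ over $\vy\in[\vz+B_H(L)]\setminus B_H(k)$ yields a factor of order $L^2$ (using $L\geq n^{1/10}$ to dominate $(d-1)$-dimensional boundary slices that contribute only $O(L)$), while summing over the $M$ regular boundary vertices $\vz$ on $\{X^{K-reg}(k)=M\}$ contributes the factor $M$.

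For a single pair $(\vz,\vy)$, decompose on two nearly independent events:
\[\cE_1(\vz) := \{0\sa{B_H(k)}\vz,\ \vz\in REG(K,k),\ X^{K-reg}(k)=M\},\]
which is $\sigma(C_{B_H(k)}(0))$-measurable, and $\cE_2(\vz,\vy)$: the edge $\{\vz,\vz'\}$ is open and there is an open path from $\vz'$ to $\vy$ inside $\Anns(n/2,4n)$ avoiding $C_{B_H(k)}(0)\setminus\{\vz\}$. Conditional on $C_{B_H(k)}(0)=\cC$, the event $\cE_2$ depends only on edges having at least one endpoint outside $B_H(k)$, hence is independent of the conditioning. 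Apply the rectangle strengthening Theorem \ref{thm:newboxtwopt} to an appropriate rectangle $R_{\vz,\vy}$ of fixed aspect ratio containing both $\vz'$ and $\vy$, chosen so that $\vz'+MR_{\vz,\vy}\subseteq\Anns(n/2,4n)$; this gives the unrestricted bound $\prob(\vz'\sa{R_{\vz,\vy}}\vy)\geq c\|\vz'-\vy\|^{2-d}$.

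The cluster-collision correction is controlled using BK:
\[\prob\bigl(\vz'\sa{R_{\vz,\vy}}\vy\text{ through some }\vw\in\cC\setminus\{\vz\}\bigr)\leq\sum_{\vw\in\cC}\tau(\vz',\vw)\tau(\vw,\vy).\]
Splitting $\cC$ into annular shells around $\vz$, the $K$-regularity of $\vz$ (giving $\#C(\vz)\cap[\vz+B(2^t)]\leq 2^{4t}t^7$ off a negligible exceptional set) bounds the expectation of this sum over $\cC\in\cE_1(\vz)$ by $CK^{6-d}(\log K)^7\,\|\vz'-\vy\|^{2-d}\prob(\cE_1(\vz))$, paralleling the control of $I_{21}$ in the proof of Lemma \ref{lem:yrestfirst}(2B). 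For $K$ fixed large, this is absorbed into half of the main term. A local-modification argument as at the end of Lemma \ref{lem:yrestfirst}(2B) --- closing all non-distinguished edges in $[\vz+B(4K)]\setminus B_H(k)$ and opening a fixed connection from $\vz$ through $\{\vz,\vz'\}$ to a selected $\vz'$-to-$\vy$ path --- upgrades $\cE_1\cap\cE_2$ to admissability (forcing pivotality), is at most $e^{CK^d}$-to-one, and preserves $\cE_1$ since it modifies only edges outside $B_H(k)$.

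The main obstacle is geometric: since $\vz\in S'(k)$ may lie on $S(0)$ (so $\vz(1)=0$), the $M$-dilation of any fixed-aspect-ratio rectangle containing $\vz'$ with $\alpha_1>0$ can fail to stay outside $B_{-}(n/2)$. The fix is a case analysis on the extremal coordinate of $\vz$: when $|\vz(j^*)|=k$ for some $j^*\neq 1$, choose $\vz'=\vz+\mathrm{sign}(\vz(j^*))\ve_{j^*}$ so that $|\vz'(j^*)|\geq n+1$, and arrange $R_{\vz,\vy}$ to have large extent in direction $\ve_{j^*}$ so that $\vz'+MR_{\vz,\vy}\subseteq\{|x(j^*)|>n/2\}$ --- automatically disjoint from $B_{-}$. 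For $L$ comparable to $n$, where no such single rectangle fits, one chains together two or more shorter extensions. The sub-slice of admissable $\vy$ with $\vy(1)=0$, where even the asymmetric rectangle fails, is $(d-1)$-dimensional and contributes only $O(L)$ to the $\vy$-sum, hence is negligible relative to the desired $L^2$.
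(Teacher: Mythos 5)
Your overall plan mirrors the paper's: condition on $\cC := C_{B_H(k)}(0)$, build an outward open connection to $\vy$ inside the annular region, subtract a cluster-collision correction by BK, and upgrade to admissability by a local edge modification. The decisive difference from the paper is that you anchor the outward arm at the pivot neighbor $\vz'$ itself, whereas the paper introduces a \emph{separate} anchor $\tz \in (\vz + B_H(2K)) \setminus B_{H}(k+K)$ at $\ell^\infty$-distance at least $K$ from $B_H(k) \supseteq \cC$, and then separately connects $\vz$ to $\tz$ in the modification step. That separation is not cosmetic; it is the reason the $K$-dependence works.

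Here is the gap concretely. With the paper's $\tz$, the sum $\sum_{\zeta \in \cC} \tau(\tz,\zeta)\tau(\zeta,\vy)$ starts at the dyadic shell $t = \lfloor \log_2 K\rfloor$ around $\tz$, because the nearest point of $\cC$ is at distance $\geq K$ from $\tz$; then $K$-regularity of $\vz$ (giving $\#\cC\cap[\vz+B(s)] \lesssim s^4\log^7 s$ for $s \geq K$) produces a prefactor of order $K^{6-d}\log^7 K \to 0$ as $K\to\infty$, which can be made smaller than half the constant coming from Theorem~\ref{thm:newboxtwopt}. In your setup $\vz'$ is a lattice neighbor of $\vz$, so $\vz'$ sits at $\ell^\infty$-distance at most $2$ from every $\cC$-vertex adjacent to $\vz$. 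Since $\cC$ connects $\vz$ to $0$, $\cC \setminus \{\vz\}$ contains at least one neighbor $\vw_0$ of $\vz$, and the single term $\tau(\vz',\vw_0)\tau(\vw_0,\vy)$ in your BK bound already has order $\|\vz'-\vy\|^{2-d}$ with a constant that does \emph{not} depend on $K$. The $t<\log_2 K$ shells contribute an $O(1)$ multiple of the main term, not an $O(K^{6-d}\log^7K)$ multiple, so your stated estimate for the cluster-collision correction is false and the subtraction $\prob(\vz'\lra\vy)-\text{correction}$ cannot be guaranteed to stay positive, no matter how large you take $K$. You have effectively conflated the pivot endpoint $\vz'$ (which must be adjacent to $\vz$ for pivotality of $\{\vz,\vz'\}$ to be meaningful) with the anchor $\tz$ (which must be far from $\cC$ for the correction to vanish). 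Relatedly, you drop the paper's disjointness event $\cE_3 = \{C(\vz)\cap C(\tz)=\varnothing\}$, which is what guarantees the deterministically chosen path $\pi$ in the modification step avoids $\cC$ globally; with $\vz'$ adjacent to $\vz$ and $\{\vz,\vz'\}$ forced open, there is no analogous disjointness condition to impose, and your modification argument does not control crossings of $\partial B_H(k)$ at $\cC$-vertices far from $\vz$.

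The geometric case analysis in your final paragraph --- choosing $\vz'$ in the extremal-coordinate direction and arranging the rectangle to point away from $B_{-}(n/2)$, with chaining when $L$ is comparable to $n$ --- is a reasonable observation about the obstacle, and the paper does rely on the rectangular Theorem~\ref{thm:newboxtwopt} in essentially this way. But it does not repair the $\vz'$-versus-$\tz$ error above, which is fatal to the argument as written.
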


\begin{lem}
\label{thm:mombd2}
Let $K$ be fixed larger than some dimension-dependent constant. There exists a constant $C = C(K)$ such that the following holds for all $n$ large, for $n \leq k \leq 2n$, $n^{1/10} \leq L \leq 3n-k$, and $M \geq L^2 / 2$:
\begin{equation}\label{eq:mombd2}
  \E Y(k, L, K)^2 \indi_{X^{K-reg}(k) = M} \leq C M^2 L^4 \prob(X^{K-reg}(k) = M)\ .
\end{equation}

\end{lem}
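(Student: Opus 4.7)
The plan is to bound $Y(k,L,K)^2$ by an expression involving unrestricted cluster sizes, condition on $C_{B_H(k)}(0)$, and then apply tree-graph estimates. First, I would drop the pivotality requirement in the definition of admissibility. Since each admissible $(\vz,\vy)$ satisfies $\vz \in REG(K,k) \cap C_{B_H(k)}(0)$ together with $\vy \in C_{\Anns(n/2,4n)}(\vz) \cap (\vz + B_H(L))$, we get
\[
  Y(k,L,K) \leq \sum_{\vz} \indi_{\vz \in REG(K,k) \cap C_{B_H(k)}(0)} N_{\vz}, \qquad N_{\vz} := \#\bigl(C_{\Anns(n/2,4n)}(\vz) \cap (\vz + B_H(L))\bigr).
\]
Squaring and conditioning on the realization $C_{B_H(k)}(0) = \mathcal{C}$, the right-hand side becomes
\[
  \sum_{\mathcal{C}: X^{K-reg}(\mathcal{C}) = M} \prob(F_{\mathcal{C}}) \sum_{\vz_1, \vz_2 \in \mathcal{C} \cap REG(K,k)} \E\bigl[N_{\vz_1} N_{\vz_2} \mid F_{\mathcal{C}}\bigr], \qquad F_{\mathcal{C}} := \{C_{B_H(k)}(0) = \mathcal{C}\}.
\]
On $F_{\mathcal{C}}$ the status of all edges disjoint from $B_H(k)$ remains i.i.d.\ $\mathrm{Bernoulli}(p_c)$, while all edges with an endpoint in $\mathcal{C}$ are either forced open (if internal to $\mathcal{C}$) or closed (if on its boundary inside $B_H(k)$). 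It follows, after a standard exploration argument as in the proof of Lemma \ref{lem:yrestfirst}(2A), that for any $\vz \in \mathcal{C}$ we may replace $C_{\Anns(n/2,4n)}(\vz)$ by $C(\vz)$ in the computation: $N_\vz \leq \# C(\vz) \cap (\vz + B(L))$ with the caveat that the cluster is explored off of $\mathcal{C}$.

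The second step would be the tree-graph estimate. Expanding
\[
  \E[N_{\vz_1} N_{\vz_2} \mid F_{\mathcal{C}}] \leq \sum_{\vu_1 \in \vz_1 + B(L),\, \vu_2 \in \vz_2 + B(L)} \prob(\vz_1 \lra \vu_1,\, \vz_2 \lra \vu_2 \text{ off } \mathcal{C}),
\]
I would apply the Aizenman-Newman tree-graph inequality used for Lemma \ref{lem:momentaiz} (see also Lemma~2.1 of \cite{KN11}): the joint connectivity is bounded by a sum over a ``meeting point'' $\vw$ of products $\tau(\vz_1,\vw)\tau(\vz_2,\vw)\tau(\vu_1,\vw)\tau(\vu_2,\vw)$, plus permutations. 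Summing the two inner sums $\sum_{\vu_i \in \vz_i + B(L)} \tau(\vu_i,\vw)$ yields a factor of $C L^2$ each by Lemma \ref{lem:momentaiz}, and then the remaining sum $\sum_\vw \tau(\vz_1,\vw) \tau(\vz_2,\vw)$ is uniformly bounded by a constant using the high-dimensional triangle condition (a consequence of \eqref{eq:twopt} applied to the convolution $\tau * \tau$ in $d \geq 11$). This gives $\E[N_{\vz_1}N_{\vz_2}\mid F_{\mathcal{C}}] \leq C L^4$ uniformly in $\mathcal{C}$ and in $\vz_1,\vz_2 \in \mathcal{C} \cap REG(K,k)$.

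Plugging this bound into the earlier decomposition, the inner sum over $\vz_1,\vz_2$ has at most $M^2$ nonzero terms (there are exactly $M$ regular vertices of $\mathcal{C}$ on $S'(k)$ when $X^{K-reg}(k)=M$), yielding
\[
  \E[Y(k,L,K)^2 \indi_{X^{K-reg}(k) = M}] \leq C M^2 L^4 \sum_{\mathcal{C}: X^{K-reg}(\mathcal{C}) = M} \prob(F_{\mathcal{C}}) = C M^2 L^4 \prob(X^{K-reg}(k) = M),
\]
as claimed. The main obstacle I anticipate is the rigorous handling of the tree-graph inequality under the conditioning on $F_{\mathcal{C}}$: the forced-closed edges on $\partial \mathcal{C}$ skew the joint distribution of the extensions $C(\vz_1), C(\vz_2)$ relative to the unconditional law. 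The cleanest way around this is to exhibit the two extension clusters as disjoint unions of $\mathcal{C} \cap (\vz_i + B(L))$ (deterministic under $F_{\mathcal{C}}$ and contributing $O(L^2)$ each by the same arguments as in Lemma \ref{thm:mombd1}) plus a genuine ``new growth'' component living in $\Zd \setminus B_H(k)$, to which the unconditional tree-graph bound applies directly by independence of edges outside $B_H(k)$.
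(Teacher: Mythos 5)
Your plan drops the pivotality requirement (condition 5 of admissibility) and tries to recover the $L^4$ factor from tree-graph estimates. This loses the key feature of the construction, and the uniform bound $\E[N_{\vz_1}N_{\vz_2}\mid F_{\mathcal{C}}] \leq CL^4$ that you need does not hold. Under the conditioning $F_{\mathcal{C}}$, every vertex of $\mathcal{C}$ is already in the same open cluster, so for $\vz \in \mathcal{C}\cap S'(k)$ the event $\{\vy \sa{\Anns(n/2,4n)} \vz\}$ is the same as $\{\vy \sa{\Anns(n/2,4n)} \mathcal{C}\}$. Thus $N_{\vz}$ counts vertices of the extension of the \emph{whole cluster} $\mathcal{C}$ into a box around $\vz$, not vertices connected to $\vz$ via a single arm. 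When $\mathcal{C}\cap S'(k)$ is dense near $\vz$ (as it may be, even for regular $\vz$), $\prob(\vy \lra \mathcal{C}\mid F_{\mathcal{C}})$ is not of order $\|\vy-\vz\|^{2-d}$, and $\E[N_\vz \mid F_{\mathcal{C}}]$ need not be $O(L^2)$. The only deterministic control you have, via $K$-regularity of $\vz$ (Definition \ref{RegDef}), is $\#\bigl(C(\vz)\cap(\vz+B(L))\bigr) < L^4\log^7 L$ with conditional probability $\geq 1 - e^{-\log^2 L}$; this gives $\E[N_\vz\mid F_{\mathcal{C}}] \lesssim L^4\log^7 L$ and $\E[N_{\vz_1}N_{\vz_2}\mid F_{\mathcal{C}}] \lesssim L^8\log^{14}L$, off by a factor of $L^4$ from what you need. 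Relatedly, your last paragraph asserts that $\mathcal{C}\cap(\vz_i+B(L))$ contributes $O(L^2)$; regularity only gives $L^4\log^7 L$. (In fact, since the admissible $\vy$ lie outside $B_H(k)$ while $\mathcal{C}\subseteq B_H(k)$, that piece is actually empty --- the entire count is ``new growth'' --- which is precisely why the problem above bites with full force.)

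Pivotality is what rescues the argument: if $\{\vz_i,\vz_i'\}$ is pivotal for $\{0\lra\vy_i\}$, then any open path from $0$ to $\vy_i$ must pass through $\vz_i'$, and its terminal segment from $\vz_i'$ to $\vy_i$ cannot touch $\mathcal{C}$ again (else pivotality fails). Moreover, for $\vz_1\neq\vz_2$, the two terminal segments must be vertex-disjoint, again by pivotality. Conditional on $F_{\mathcal{C}}$, this forces the clean product bound $\tau(\vz_1',\vy_1)\tau(\vz_2',\vy_2)$, whose sum over $\vy_1\neq\vy_2$ is $O(L^4)$; the $M^2$ then comes from summing over $\vz_1\neq\vz_2$. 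The diagonal $\vz_1=\vz_2$ terms are handled separately by a second-moment bound for the cluster size near $\vz_1$, giving $O(ML^6)\leq O(M^2 L^4)$ using $M\geq L^2/2$. Without pivotality the connections to $\vy_1$ and $\vy_2$ can share long segments and emanate from anywhere in $\mathcal{C}$, so the product structure is lost and the tree-graph bound (even if one carefully enumerated all topologies and handled the conditioning) does not recover $L^4$.
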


\begin{proof}[Proof of Lemma \ref{thm:mombd1}]
  As in the proof of (2B) from Lemma \ref{lem:yrestfirst}, we introduce three events which can be used to build connections from $\vz$ to $\vy$. In these definitions, we generally have $\vz \in S'(k)$, $\vy \in [\vz + B_H(L)] \setminus B_{H}(k),$ and $\tz \in (\vz + B_H(2K)) \setminus B_{H}(k+K)$. 
 Let
 \begin{align*}
   \cE_1(\vz, K, M) &:= \left\{ \vz \stackrel{B_H(k)}{\longleftrightarrow} 0,\, \vz \in REG(K,k),\text{ and } X^{K-reg}(k) = M \right\}\ ;\\
\cE_2(\vz, \tz, \vy) &:= \left\{\tz \stackrel{\Anns(n/2,4n)}{\longleftrightarrow} \vy \text { off } C_{B_H(k)}(\vz) \right\}\ ;\\
\cE_3(\vz, \tz) &:= \left\{C(\vz) \cap C(\tz) = \varnothing \right\}\ .
 \end{align*}

We continue by proving a pair of claims about the probabilities of these events.
\begin{clam}\label{clam:twoE}
  There exists a $c > 0$ depending only on $d$ such that the following holds. Let $K$ be larger than some fixed dimension-dependent constant, and $n$ be large relative to $K$; let $n^{1/10} \leq L \leq 3n-k$ and $M \geq L^2 / 2$. For any $\vz \in S'(k)$ and $\tz \in (\vz + B_H(2K))\setminus B_H(k+K)$, we have
\[\sum_{\vy \in [\vz + B_H(L)] \setminus B_{H}(k)} \prob\left(\cE_1(\vz, K, M) \cap \cE_2(\vz, \tz, \vy)\right) \geq c L^2 \prob(\cE_1(\vz, K, M))\ . \]
\end{clam}
\begin{proof}
  Note that the status of $\cE_1$ can be determined by examining $C_{B_H(k)}(\vz)$. We can thus condition on $C_{B_H(k)}(\vz)$ and bound the conditional probability of $\cE_2$, similarly to the beginning of the proof of Claim~\ref{clam:e1}:
\begin{align*}
 \sum_{\vy \in [\vz + B_H(L)] \setminus B_H(k)} &\prob\left(\cE_1(\vz, K, M) \cap \cE_2(\vz, \tz, y)\right) \\
&\geq  \sum_{\substack{\cC \in \cE_1 }} \prob(C_{B_H(k)}(\vz) = \cC) \sum_{\vy \in [\vz + B_H(L)] \setminus B_H(k) } \prob(\tz \stackrel{\Anns(n/2,4n)}{\longleftrightarrow} \vy \text{ off } \cC \mid C_{B_H(k)}(\vz) = \cC)\\
&=  \sum_{\substack{\cC \in \cE_1 }} \prob(C_{B_H(k)}(\vz) = \cC) \sum_{\vy \in [\vz + B_H(L)] \setminus B_H(k)} \prob(\tz \stackrel{\Anns(n/2,4n)}{\longleftrightarrow} \vy \text{ off } \cC)\ ,
\end{align*}
where we have used the fact that the events in the last sum depend on disjoint sets of edges. We estimate the terms of the second sum using a union bound on vertices of $\cC$:
\begin{align*}
  \prob(\tz \stackrel{\Anns(n/2,4n)}{\longleftrightarrow} \vy \text{ off } \cC) &\geq \prob(\tz \stackrel{\Anns(n/2,4n)}{\longleftrightarrow} \vy) - \sum_{\zeta \in \cC}\prob\left( \{\zeta \leftrightarrow \tz\} \circ\{ \zeta \leftrightarrow \vy \}\right)\ ,
\end{align*}
where we have used the fact that $\{\tz \leftrightarrow \zeta\} \supseteq \{\tz \stackrel{\Anns(n/2,4n)}{\longleftrightarrow} \zeta\}$ and similarly with $\{\zeta \leftrightarrow \vy\}$.
Applying the BK inequality gives the bound
\begin{align*}
 \prob(\tz \stackrel{\Anns(n/2,4n)}{\longleftrightarrow} \vy) - \sum_{\zeta \in \cC} &\prob\left(\{\zeta \leftrightarrow \tz\} \circ \{\zeta \leftrightarrow \vy\} \right) \geq \prob(\tz \stackrel{\Anns(n/2,4n)}{\longleftrightarrow} \vy) - \sum_{\zeta \in \cC} \prob\left(\zeta \leftrightarrow \tz)\prob(\zeta \leftrightarrow \vy \right)\\
&\geq \prob(\tz \stackrel{\Anns(n/2,4n)}{\longleftrightarrow} \vy) - \sum_{t=\lfloor \log_2(K) \rfloor}^\infty \sum_{\substack{\zeta \in \cC \\
\zeta \in [\tz + Ann(2^t, 2^{t+1})]}}   \prob\left(\zeta \leftrightarrow \tz)\prob(\zeta \leftrightarrow \vy \right)\ .
\end{align*}
Note we began the sum above not from $t = 0$ because $\tz$ is at least distance $K$ away from $\cC$. 

We sum the above over $\vy$ and use Theorem \ref{thm:boxcon} on the first term on the right-hand side, finding a lower bound of $c L^2$ for a $c$ uniform for parameter values as in the statement of Claim \ref{clam:twoE}. (Our restrictions on the value of $n$ and $L$ force $L$ to be large relative to $K$ so that the distance between $\tz$ and the ``typical'' $\vy$ is order $L$.)
For the other term, we use \eqref{eq:twopt} for an upper bound on the two-point function; the result is
\[\sum_{\vy \in [\vz + B_H(L)] \setminus B_H(k)} \prob(\tz \leftrightarrow \vy \text{ off } \cC) \geq cL^2 -  CL^2 \sum_{t=\lfloor\log_2(K) \rfloor}^\infty \sum_{\substack{\zeta \in \cC \\
\zeta \in [\tz + Ann(2^t, 2^{t+1}])}}   \prob(\zeta \leftrightarrow \tz)\ .\]

Furthermore, we have $\tz + B(2^s) \subseteq \vz + B(2^{s+1})$ for $s \geq \log_2(2K)$, and note that for any $\cC$ satisfying the requirements of $\cE_1$ and any $m \geq K$, we necessarily have $\#(\cC  \cap \vz + B(m)) \leq m^4 \log^7(m)$. Using these in the above gives a lower bound of
\begin{align*}
  &\geq c L^2 - C L^2 \sum_{t=\log_2(K)}^\infty (\#(\cC \cap [\vz + B(2^{t+2})])) 2^{t(2-d)}\\
  &\geq c L^2 - C' L^2 \sum_{t=\log_2(K)}^\infty t^7 2^{4t} 2^{t(2-d)}\\
  &\geq c L^2 - C'' L^2 K^{6-d} \log^7(K) \ .
\end{align*}
Again, the constant $C''$ is uniform for parameter values in the appropriate range. Therefore, whenever $K$ is sufficiently large and fixed relative to the other parameters, the second term is negligible relative to the first.
\end{proof}

Our next claim gives the ability to add on $\cE_3$ to the intersection in the last claim. 

\begin{clam}\label{clam:threeE}
  For each $K > 0$ sufficiently large (larger than some dimension-dependent constant), there exists a $c  = c(K) > 0$  such that the following holds uniformly in $n$, $k$, $L$, and $M$ as in the statement of Theorem \ref{thm:mombd1}. For any $\vz \in S'(k)$, there exists a $\tz \in [\vz + B_H(2K)] \setminus B_{H}(k+K)$ such that
\[\sum_{\vy \in [\vz + B_H(L)] \setminus B_{H}(k)} \prob(\cE_1(\vz, K, M) \cap \cE_2(\vz, \tz, \vy) \cap \cE_3(\vz, \tz)) \geq c L^2 \prob(\cE_1(\vz, K, M))\ . \]
\end{clam}
\begin{proof}
  Let $\zeta$ be a uniformly chosen (independently of the percolation process) random vertex of $[\vz + B_H(2K)] \setminus B_{H}(k+K)$, and let $E_{\zeta}$ denote expectation with respect to this random choice. We will prove that for $K$ large,
\begin{equation}
\label{eq:unifx}
E_\zeta\sum_{\vy \in [\vz+B_H(L)]\setminus B_{H}(k)} \prob(\cE_1 \cap \cE_2 \cap \cE_3) \geq c L^2 \prob(\cE_1)\ ,
 \end{equation}
where $\cE_2 = \cE_2(\vz, \zeta, y)$ and $\cE_3 = \cE_3(\vz, \zeta)$. This will suffice to show the claim. Indeed, for \eqref{eq:unifx} to hold, there must be some $\tz$ such that, when $\zeta = \tz$, the quantity inside the expectation $E_\zeta$ is at least $c L^2 \prob( \cE_1)$.

For any possible value of $\zeta$, if $\cE_1 \cap \cE_2 \cap \cE_3^c$ occurs, then there exists a vertex $\vv$ such that
\[\cE_1 \cap \{ 0 \leftrightarrow \vv\} \circ \{\zeta \leftrightarrow \vv\} \circ \{\vv \leftrightarrow \vy\} \]
occurs. (Compare to the reasoning above \eqref{eq:sumvforzeta}, where a similar vertex $v$ is found.) In particular, by the BK inequality, for this value of $\zeta$ we have
\[ \prob\left(\cE_1 \cap \cE_2 \cap \cE_3^c\right) \leq \sum_{\vv \in \Z^d} \prob\left(\cE_1 \cap \{0 \leftrightarrow \vv\}\right) \prob\left(\zeta \leftrightarrow \vv\right) \prob\left(\vv \leftrightarrow \vy \right)\ .\]

Summing the above over $\vy \in [\vz+B_H(L)] \setminus B_{H}(k)$ and using \eqref{eq:twopt}, we get a factor of at most a constant multiple of $L^2$, uniform in the value of $\zeta$. Applying \eqref{eq:twopt} again:
\begin{align}
\nonumber
  \sum_{\vy \in [\vz+B_H(L)]\setminus B_H(k)}\prob\left(\cE_1 \cap \cE_2 \cap \cE_3^c\right) &\leq C L^2\sum_{\vv \in \Z^d} \prob\left(\cE_1 \cap \{0 \leftrightarrow \vv\} \right) \prob\left(\zeta \leftrightarrow \vv\right)\\
  &\leq C' L^2\sum_{\vv \in \Z^d} \prob\left(\cE_1 \cap \{0 \leftrightarrow \vv\} \right) \|\zeta - \vv\|^{2-d} \ .   \label{eq:sumrand}
\end{align}
The right-hand side of \eqref{eq:sumrand} is nearly identical to that of \eqref{eq:almostsame}. The differences are that now $0$ plays the role of $\vx$, there is a different prefactor ($C' L^2$ instead of $C n^{2-d}$), and the definition of $\cE_1$ is somewhat modified. A proof very similar to the one used to treat \eqref{eq:almostsame} gives that (compare to the negative term in \eqref{eq:touselaterman})
\[E_{\zeta} \sum_{\vv \in \Z^d} \prob\left(\cE_1 \cap \{0 \leftrightarrow \vv\} \right) \|\zeta - \vv\|^{2-d} \leq C'' K^{6-d} \log^8(K) \prob(\cE_1)\ , \]
uniformly over $K$ sufficiently large and over $n$, $k$, $L$, $M$, and $\vz$ as in the statement of Claim \ref{clam:threeE}. 

We can thus uniformly lower-bound $E_\zeta \prob(\cE_1 \cap \cE_2 \cap \cE_3)$:

\begin{align}
  \chJH{E_{\zeta}} \sum_{\vy \in [\vz + \chJH{B_H(L)}] \setminus \chJH{B_{H}(k)}}\prob(\chJH{\cE_1 \cap \cE_2 \cap \cE_3}) &= \chJH{E_{\zeta}}  \sum_{\vy \in [\vz + \chJH{B_H(L)}] \setminus \chJH{B_{H}(k)}} \left[\prob(\chJH{\cE_1 \cap \cE_2}) - \prob(\chJH{\cE_1 \cap \cE_2 \cap \cE_3^c}) \right]\nonumber\\  &\geq c L^2 \prob(\chJH{\cE_1}) - C L^2 K^{6-d} \log^8 K\, \prob(\chJH{\cE_1})\ ,
\end{align}
where we have used Claim \ref{clam:twoE} for the inequality. Taking $K$ sufficiently large and using the uniformity of the constants $c$, $C'$ establishes \eqref{eq:unifx}.
\end{proof}

We will now complete the proof of the first moment bound \eqref{eq:mombd1} from Theorem \ref{thm:mombd1} using Claim \ref{clam:threeE}. We claim that for any pair $(\vz,\vy)$ with $\vz \in S'(k)$ and $\vy \in [\vz + B_H(L)] \setminus B_{H}(k)$,
\begin{align}\nonumber
  \prob\big( (\vz,\vy) \text{ is } &(k,L,K)-\text{admissable and } X^{K-reg}(k) = M\big)\\
&\geq c(K) \prob\left(\cE_1 \cap \cE_2 \cap \cE_3\right)\label{eq:modE}
\end{align}
for a constant $c = c(K) > 0$, for all $K$ larger than some constant (depending only on the dimension $d$).  The bound of \eqref{eq:modE} is uniform in $n$, $k$, $L$, and $M$ as in the statement of Theorem \ref{thm:mombd1}, where $\tz$ is chosen for $\vz$ according to Claim \ref{clam:threeE} (note $\vz,\tz$ appear as arguments in the $\cE_i$ events on the right-hand side). The proof of \eqref{eq:modE} is via an edge modification argument similar to the one used to prove \eqref{eq:modmapped}, so we do not detail it here. Roughly speaking, one must open edges to connect $\vz$ to $\tz$ in a way that guarantees the pivotality of $\{\vz, \vz'\}$ without, for instance, changing the condition $\vz \in REG(K,k)$ guaranteed by $\cE_1$.

Given \eqref{eq:modE}, the conclusion of the proof is immediate. Summing the bound over $\vy \in [\vz + B_H(L)] \setminus B_{H}(k)$ and using Claim \ref{clam:threeE} gives
\begin{align*}
\sum_{\vy \in [\vz + B_H(L)] \setminus B_H(k)}\prob\big( (\vz,\vy) \text{ is } &(k,L,K)-\text{admissable and } X^{reg}(k) = M\big) \geq  c L^2 \prob(\cE_1)\ .
\end{align*}
Summing now over $\vz$ in the above gives a lower bound $c M L^2 \prob( X^{K-reg}(k) = M )$, since on $\cE_1$ we have $X^{K-reg}(k) = M$ definitionally.
\end{proof}

\begin{proof}[Proof of Lemma \ref{thm:mombd2}]
We abbreviate $Y = Y(k, L, K)$ and $\mathbf{1}_{M}$ for $\mathbf{1}_{X^{K-reg}(k) = M}$ and write
\begin{align}\label{eq:beftyppair}
  \E \left[Y^2 \mathbf{1}_{M}  \right] = \sum_{\substack{\vz_1,\vy_1 \\ \vz_2, \vy_2}} \prob\left((\vz_1,\vy_1)\text{ and } (\vz_2,\vy_2) \text{ are } (k,L,K)-\text{admissable} \right).
\end{align}
A typical term of the above sum can be written as
\begin{align}
\label{eq:typpair}
\sum_{\cC} \prob(C_{B_H(k)}(0) = \cC) \prob\left(\vy_i \sa{\Anns(n/2,4n)} \vz_i, \, \vz_i' \text{ pivotal for } \{0 \lra \vy_i\},\, i = 1, 2  \mid  C_{B_H(k)}(0) = \cC \right)  
\end{align}
where $\cC$ is such that conditions 1 -- 3 of the definition of admissability hold for the given $\vz_1$ and $\vz_2$ (note that these depend only on $C_{B_H(k)}(0)$). We consider first the case that $\vz_1 \neq \vz_2$ and $\vy_1 \neq \vy_2$.

On the event $\{\vy_i \sa{\Anns(n/2,4n)} \vz_i, \, \vz_i' \text{ pivotal for } \{0 \lra \vy_i\},\, i = 1, 2\}  \cap\{ C_{B_H(k)}(0) = \cC \}$ we claim there exist disjoint open paths $\gamma_1$ (resp. $\gamma_2$) connecting $\vy_1$ to $\vz_1'$ (resp. $\vy_2$ to $\vz_2'$) and avoiding $\cC$. To choose $\gamma_1$, consider any path $\sigma$ from $0$ to $\vy_1$. Since $\{\vz_1, \vz_1'\}$ is pivotal for the connection, this path passes through $\vz_1'$; the path must subsequently never intersect $\cC$ (otherwise $\{\vz_1, \vz_1'\}$ could be bypassed, contradicting pivotality), and so the terminal segment of $\sigma$ starting from $\vz_1'$ may be used as $\gamma_1$. If one chooses $\gamma_2$ similarly, we see that necessarily $\gamma_1 \cap \gamma_2 = \varnothing$. Indeed, if $\gamma_1$ and $\gamma_2$ intersected at some $\vv$, then following $\gamma_2$ from $\vy_2$ to $\vv$ and then following $\gamma_1$ from $\vv$ to $\vz_1'$ (or following $\gamma_1$ from $\vy_1$ to $\vv$ and then following $\gamma_2$), one sees that one of the edges $\{\vz_i, \vz_i'\}$ is not pivotal, a contradiction.

Having found such $\gamma_1$ and $\gamma_2$, one sees that when $\vz_1 \neq \vz_2$ and $\vy_1 \neq \vy_2$, the conditional probability in \eqref{eq:typpair} is at most
\[\prob(\vy_1 \lra \vz_1' \text{ off }\cC) \prob(\vy_2 \lra \vz_2' \text{ off }\cC) \leq \Cr{c:twopthigh}^2 \|\vz_1' - \vy_1\|^{2-d} \|\vz_1' - \vy_1\|^{2-d}\ . \]
Summing the above over $\vy_1 \neq \vy_2$ gives a uniform upper bound of $C L^4$. Putting this in \eqref{eq:typpair} and performing the sum over $\cC$, then doing an additional sum over $\vz_1 \neq \vz_2$ gives
\begin{equation}\label{eq:typpair2}\sum_{\substack{\vz_1 \neq \vz_2 \\ \vy_1 \neq \vy_2}} \prob\left((\vz_1,\vy_1)\text{ and } (\vz_2,\vy_2) \text{ are } (k,L,K)-\text{admissable} \right) \leq C M^2 L^4 \prob(X^{K-reg}(k) = M)\ .\end{equation}

When summing over terms in \eqref{eq:beftyppair} where $\vz_1 = \vz_2$, one is essentially computing an upper bound of the second moment of the cluster size of $\vz_1$; the resulting bound is $C M L^6 \prob(X^{K-reg}(k) = M)$. Since $M \geq L^2 /2$, this sum has an upper bound identical to that in \eqref{eq:typpair2}, completing the proof.

\end{proof}




Given \eqref{eq:mombd1} and \eqref{eq:mombd2}, Theorem \ref{thm:mainextend} now follows by a second moment argument similar to the one immediately following Lemma \ref{lem:yrestfirst} above. \qed

\section{Upper bound on $\pi_H(n)$}\label{sec:ubonearm}
We will prove the claim $\pi_H(n) \leq Cn^{-3}$ from part (a) of Theorem \ref{Critical Exponents} using two main ideas. The first main idea is an upper bound on the cardinality of $C_H(0) \cap Ann_H(n,2n)$ which gives some information about scaling in large clusters, and plays the role that knowledge of the cluster size exponent $\zeta$ would otherwise play (recall we have not yet proved part (c) of Theorem \ref{Critical Exponents}). A key ingredient is a mass-transport inequality, which controls the number of large half-space clusters.
The second main idea is an inductive argument which allows us to ``bootstrap'' control of $\pi_H(2n)$ from $\pi_H(n)$. This argument is based on a lemma which is similar in spirit to Lemma 2.3 of \cite{KN11}, with some major differences. These reflect the different geometry of $\Zd_+$ and the fact that we cannot use the two-point function or size exponents --- which were used in \cite{KN11} ---  having not yet proved parts (b) or (c) of Theorem \ref{Critical Exponents}.

 Recall the definition of a mass-transport rule from Section \ref{sec:masstrans}. In proving the upper bound for $\pi_H(n)$, we fix a particular $\mass$ once and for all for each fixed value of $n$:
\[\mass(0,\vx) = \begin{cases} 1 \quad &\text{if }\, \vx \in \mathsf{A}^{out}_0(n, n, 2n)\\
0 \quad &\text{otherwise.}\end{cases} \]
The set $\mathsf{A}_0^{out}$ was defined at \eqref{eq:aoutdef}.

The bound we will need for proving our main theorem comes from a  comparison of asymptotics for $\E \send$ and $\E \get$. Let $\kappa > 0$ be arbitrary (in practice, typically small). We define the event
\begin{equation}\label{eq:badsendevent}
A(\kappa):= \left\{\send \geq \kappa n^4\right\}\ . \end{equation}
By the definition \eqref{eq:badsendevent},
\begin{equation}
  \label{eq:badsendlb}
  \E \send \geq \kappa n^4 \prob(A(\kappa))\ .
\end{equation}
An upper bound on $\E\send$ follows via Theorem \ref{thm:mainextend};  Lemma \ref{lem:transport} and \eqref{eq:badsendlb} then show a corresponding upper bound for $\prob(A(\kappa))$. This is encapsulated in the following lemma.

\begin{lem}
\label{lem:get}
  There exists a $C$ such that, uniformly in $n$,
  \begin{align}
      \E \, \get \leq  C n\ .\label{eq:get}
  \end{align}
  In particular, we have the following bound uniformly in $\kappa$ and $n$:
  \begin{equation}
    \label{eq:badsendub}
    \prob(A(\kappa)) \leq \frac{C}{\kappa n^3}\ .
  \end{equation}
\end{lem}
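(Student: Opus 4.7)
First, by Lemma \ref{lem:transport} applied to $\mass$ we have $\E\send = \E\get$, so combined with \eqref{eq:badsendlb} it suffices to prove $\E\send \le Cn$ in order to conclude \eqref{eq:badsendub}. One can verify this identity directly: unpacking $\mass(\vz,0)$ via translation covariance gives $\get = \#\{\vz \in -Ann_H(n,3n): \vz \sa{\vz + \Anns(n/2,4n)} 0\}$, and translation invariance of $\prob$ yields $\E\get = \sum_{\vu \in Ann_H(n,3n)} \prob(0 \sa{\Anns(n/2,4n)} \vu) = \E\send$. So the mass-transport identity is not the source of the savings; the real work is in bounding this sum.

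The naive estimate $\E\send \le \E\,\#(C(0) \cap B(3n)) \le Cn^2$ from Lemma \ref{lem:momentaiz} is off by a factor of $n$. The crucial saving is supplied by Theorem \ref{thm:mainextend}\eqref{eq:outextend} applied with $k=n$ and $L=2n$: conditional on $\{X(n) = M\}$ for any $M \ge 2n^2$, the event $\{\send \ge c_* M n^2\}$ has probability at least $c_*$. Since $\send \le \#C(0)$, combining with the cluster-size tail $\prob(\#C(0) \ge t) \le Ct^{-1/2}$ from \eqref{SizeEst} yields $\prob(X(n) \ge M) \le C n^{-1} M^{-1/2}$ for all $M \ge 2n^2$; in particular, $\prob(X(n) \ge 2n^2) \le Cn^{-2}$.

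To deduce $\E\send \le Cn$, I would split the expectation according to the value of $X(n)$. On the rare event $\{X(n) \ge 2n^2\}$, I bound $\send \le \#(C(0) \cap B(3n))$ and combine the tail estimates above with the second-moment bound $\E[(\#C(0) \cap B(3n))^2] \le Cn^6$ from Lemma \ref{lem:momentaiz} (via a Cauchy--Schwarz or layer-cake argument keyed to the tail of $X(n)$). On the common event $\{X(n) < 2n^2\}$, I use the BK-Reimer decomposition
\[ \prob(0 \sa{\Anns(n/2,4n)} \vu) \le \sum_{\vw \in S'(n/2)} \prob(0 \sa{B_H(n/2)} \vw)\, \tau(\vw, \vu), \]
which exploits that $B(n/2) \cap \Anns(n/2,4n) = B_H(n/2)$, combined with Lemma \ref{lem:nofurther}-style estimates that limit how $C_{B_H(n)}(0)$ extends to $\vu \in Ann_H(n,3n)$ when the arm count is small.

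The main obstacle I anticipate is the delicate quantitative balance between these two regimes: summed over $\vu \in Ann_H(n,3n)$, the savings must integrate to a factor of $n^{-1}$ per summand on average, effectively emulating the Case 2 scaling $\tau_H(0,\vu) \asymp \|\vu\|^{1-d}$ of Theorem \ref{Critical Exponents}(b) without relying on that (as yet unproven) result. Once $\E\send \le Cn$ is established, the "in particular" clause \eqref{eq:badsendub} is immediate from \eqref{eq:badsendlb}.
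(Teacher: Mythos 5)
You correctly observe that $\E\send = \E\get$ is immediate by translation invariance, so it suffices to bound either one. But the plan you outline for $\E\send$ has a genuine gap, and it is instructive to see exactly where. The paper works with $\E\get = \sum_{\vu\in Ann_H(n,3n)}\prob(0\sa{\Anns(n/2,4n)}\vu)$, slices by $\ve_1$-coordinate, and then --- this is the step you are missing --- applies the third (``inward'') bullet \eqref{eq:inextend} of Theorem~\ref{thm:mainextend} in shifted form: centering the annulus at a receiving vertex $\vx \in \mathcal{S}_j$, the receiving vertices within distance $n/4$ of $\vx$ are dominated by $X_Q(D,0)$ with $D = \vx + \Anns(n/4,5n)$ and $Q = \partial^- D$. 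When $X_Q(D,0)$ is large, \eqref{eq:inextend} forces $\#C(0)\cap[\vx+B(5n)]$ to be of order $n^2\cdot X_Q(D,0)$, which Lemma~\ref{lem:momentaiz} makes expensive. Integrating the resulting tail gives $\E X_Q(D,0)\leq C$ uniformly, and since there are only $O(1)$ points in each $\mathcal{S}_j$ and $O(n)$ slices $j$, this yields $\E\get\leq Cn$.

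Your two-case split based on $X(n)$ does not close. In the ``rare'' case, $\prob(X(n)\geq 2n^2)\leq Cn^{-2}$ together with the second-moment bound $\E[(\#C(0)\cap B(3n))^2]\leq Cn^6$ only gives $\E[\send;\,X(n)\geq 2n^2]\leq Cn^2$ by Cauchy--Schwarz, which is already off by the full factor of $n$ you need; and the tail estimate $\prob(X(n)\geq M)\lesssim n^{-1}M^{-1/2}$ you derived from \eqref{eq:outextend} and \eqref{SizeEst} controls $X(n)$ \emph{in terms of} $\send$, so it cannot produce an upper bound on $\E\send$ --- \eqref{eq:outextend} amplifies a boundary count into a large $\send$, which gives lower bounds on $\send$, not upper bounds. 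In the ``common'' case, summing your BK decomposition over $\vu$ produces a bound of $Cn^2\,\E X_{S'(n/2)}(B_H(n/2),0)$, and the needed input $\E X_{S'(n/2)}(B_H(n/2),0)\leq Cn^{-1}$ is precisely the content of \eqref{eq:hsXexp} in Lemma~\ref{lem:fromloweradhoc}, which the paper proves \emph{using} Lemma~\ref{lem:get}; so this route is circular. In short, the correct tool is the inward extension \eqref{eq:inextend} applied from the point of view of a receiving vertex, not the outward extension \eqref{eq:outextend} from $0$, and your own closing paragraph correctly anticipates that your split leaves an $n^{-1}$ gap unaccounted for.
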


\begin{proof}

Note that $0$ receives mass from $\vx$ if and only if both i) $0 \in \vx + Ann_H(n,3n)$ and ii) $\{0 \stackrel{\vx+\Anns(n/2,4n)}{\xleftrightarrow{\hspace*{1cm}}} \vx \}$ (recall $Ann_H(\ell_1, \ell_2) = B_H(\ell_2) \setminus B_H(\ell_1)$). The set of $\vx$ which satisfy the nonrandom condition i) is exactly $-Ann_H(n,3n)$. We break $\get$ into a sum of contributions over ``slices'' depending on $\ve_1$-distance, setting $T(j) = [-Ann_H(n,3n) \cap S(-j)]$ and
\begin{equation}\label{eq:justtofig}
Y(j)= \{\vx \in T(j), \ \vx \stackrel{\vx+\Anns(n/2,4n)}{\xleftrightarrow{\hspace*{1cm}}} 0\}\, , \quad n \leq j \leq 3n\ . \end{equation}
See Figure \ref{Fig: Dloc}(b) for a sketch.  In particular, $\get = \sum_j \#Y(j)$. We will use \eqref{eq:inextend} of Theorem \ref{thm:mainextend} to argue that if $Y(j)$ is too large, then $C(0) \cap [\vz + B_{-}(n/2)]$ is abnormally large, for some choice of $\vz \in T(j)$.

To that end, for $\vx \in T(j)$ we set
\begin{equation*}
  X_\vx(j) := \# \left\{\vy \in Y(j):\, \|\vy-\vx\| \leq n/4 \right\}\ .
\end{equation*}
There exists a deterministic set $\mathcal{S}_j \subseteq T(j)$ of no more than $5^{d-1}$ vertices such that,
for any $\vy \in T(j)$, there is an $\vx \in \mathcal{S}_j$ such that $\|\vy - \vx\| \leq n/4$. It follows that $\#Y(j) \leq \sum_{\vx \in \mathcal{S}_j} X_\vx(j)$. If we can show that
\begin{equation}\label{eq:XtoY}
\E X_\vz(j) \leq C\ \quad \text{uniformly in } n, \quad n \leq  j \leq 3n\ , \vz \in \mathcal{S}_j\end{equation}
we can immediately conclude that $\sum_j \E \#Y(j) \leq C n$ and the lemma is proved.


We now prove \eqref{eq:XtoY}.
  We will apply the third part of Theorem \ref{thm:mainextend}, but in shifted form. Define $D = \vx + \Anns(n/4,5n)$ and $Q = \partial_{-}[\vx + \Anns(n/4,5n)]$. Each $\vy \in Y(j)$ having $\|\vy - \vx\| \leq n/4$ also satisfies $\vy \in Q$.  The vertex $\vy$ is connected to $0$ by a path lying entirely in $\vy + \Anns(n/2,4n)$; in particular, this path lies in $D$. We therefore have the upper bound
\begin{equation}\label{eq:XtoXQ}
\prob(X_\vx(j) \geq M) \leq \prob(X_Q(D,0) \geq M)\ \text{ for all } M\ .
\end{equation}
We now bound the right-hand side of \eqref{eq:XtoXQ} by
\begin{align*}
\prob\left(  \#C(0) \cap [\vx + B(5 n)] \geq c_* n^2 M \right) + \prob\left( X_Q(D,0) \geq M, \,  \#C(0) \cap [\vx + B(5 n)] \leq c_* n^2 M \right),
\end{align*}
where $c_*$ is from Theorem \ref{thm:mainextend}. We note that the shifted analogue of $\# \mathsf{A}_0^{in}$ (shifted so $\vx$ plays the role of $0$)  is a lower bound for $\#C(0) \cap [\vx + B(5n)]$. Applying Theorem \ref{thm:mainextend} to the second term in the case when $M \geq n^2$ and rearranging, we see

\[ \prob(X_Q(D,0) \geq M) \leq c_*^{-1} \prob\left(  \#C(0) \cap [\vx + B(5 n)] \geq c_* n^2 M \right)\ \text{for all } M \geq n^2\ .\]

Integrating the above with respect to $M$, we find
\begin{align}
  \E X_Q(D,0) &\leq \E[X_Q(D,0); \,0 < X_Q(D,0) \leq n^2] + \E[X_Q(D,0); \,X_Q(D,0) > n^2]\nonumber\\
  &\leq n^2 \pi(n) + c^{-2}_* n^{-2} \E[\#C(0) \cap [\vx + B(5n)]; \, \#C(0) \cap [\vx + B(5n)]\geq c_* n^4]\ . \label{eq:lastterm}
\end{align}
Using \eqref{eq:onearmprob}, the first term of \eqref{eq:lastterm} is uniformly bounded by a constant. Using Lemma \ref{lem:momentaiz}, the second term of \eqref{eq:lastterm} is also bounded by a constant, giving $\E X_Q(D,0) \leq C$. In conjunction with \eqref{eq:XtoXQ}, this gives \eqref{eq:XtoY} and completes the proof of Lemma \ref{lem:get}.

\end{proof}

We continue with the proof of the upper bound from part (a) of Theorem \ref{Critical Exponents}, namely
\begin{equation}
  \label{eq:hstovary}
  \pi_H(n) \leq C n^{-3}\ .
\end{equation}
 The main remaining ingredient is the following lemma, which relates $\pi_H(2n)$ to $\pi_H(n)$.

\begin{lem}
  \label{lem:threeterms} There exist positive constants $C_1,\, C_2,\, c_1$ such that the following hold. For each $\lambda \in (0,1]$, there exists a constant $\varepsilon_0 = \varepsilon_0(\lambda) \in (0, 1)$ such that, for all $0 < \varepsilon \leq \varepsilon_0$,
  \begin{equation}
    \label{eq:mainiteqn}
    \pi_H(n(1+\lambda)) \leq \frac{C_1}{\varepsilon n^3} + C_2 \varepsilon^{3/5} \lambda^{-2} \pi_H(n) + (1-c_1) \pi_H(n)
  \end{equation}
uniformly in $n$ large relative to $\lambda$.
\end{lem}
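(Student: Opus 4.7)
The plan is to decompose $\pi_H(n(1+\lambda))$ according to the mass-transport event $A(\kappa)$ from \eqref{eq:badsendevent} together with the boundary cluster size
\[X(n) \;:=\; X_{S'(n)}(B_H(n), 0) \;=\; \#\{\vx \in S'(n):\, 0 \sa{B_H(n)} \vx\}.\]
For tuning parameters $\kappa, t > 0$ (to be chosen as functions of $\varepsilon$ and $\lambda$), I would write
\[
\pi_H(n(1+\lambda)) \leq \prob(A(\kappa)) + \prob\!\left(0 \sa{\Zd_+} S'(n(1+\lambda)),\, X(n) < t\right) + \prob\!\left(X(n) \geq t,\, A(\kappa)^c\right),
\]
and bound each of the three summands using a different tool, corresponding to the three terms on the right-hand side of \eqref{eq:mainiteqn}.

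The first summand is controlled directly by \eqref{eq:badsendub} of Lemma~\ref{lem:get}, giving $\prob(A(\kappa)) \leq C/(\kappa n^3)$, which becomes the $C_1/(\varepsilon n^3)$ contribution upon choosing $\kappa$ of order $\varepsilon$. The second summand is handled via Lemma~\ref{lem:nofurther} applied with $A_0 = B_H(n)$, $B = S'(n(1+\lambda))$, and $A_1$ an exhausting sequence of finite half-space boxes: conditional on $\{X(n) = M\}$, the probability of extending to $S'(n(1+\lambda))$ is at most $M \pi(\lambda n) \leq C M(\lambda n)^{-2}$ by \eqref{eq:onearmprob}, and summing over $1 \leq M < t$ yields $\leq C t \lambda^{-2} n^{-2} \pi_H(n)$. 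The third summand is controlled by the extensibility result \eqref{eq:outextend} of Theorem~\ref{thm:mainextend} applied with $k = n$ and auxiliary parameter $L$ chosen so that (i) $L^2 \leq 2t$, so that $M \geq L^2/2$ is automatic on $\{X(n) \geq t\}$, and (ii) $c_* t L^2 \geq \kappa n^4$, so that the conclusion $\#\mathsf{A}_0^{out}(n,n,L) > c_* M L^2$ forces $\send \geq \kappa n^4$ via the inclusion $\mathsf{A}_0^{out}(n,n,L) \subseteq \mathsf{A}_0^{out}(n,n,2n)$ for $L \leq 2n$. The theorem then yields $\prob(X(n) \geq t, A(\kappa)^c) \leq (1 - c_*)\prob(X(n) \geq t) \leq (1 - c_*)\pi_H(n)$, producing the third term with $c_1 = c_*$.

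Finally I would tune $\kappa$ and $t$ as explicit functions of $\varepsilon$ to match the stated form of \eqref{eq:mainiteqn}, verifying the coupled feasibility condition $\kappa \leq 2 c_* t^2 / n^4$ (equivalent to the existence of an admissible $L$) and the hypotheses $n^{1/10} \leq L \leq 2n$ of Theorem~\ref{thm:mainextend}; all extraneous constants are absorbed by restricting $\varepsilon \leq \varepsilon_0(\lambda)$ sufficiently small and taking $n$ large relative to the parameters. The main obstacle is exactly this simultaneous balancing: the constraints linking $\kappa$, $t$, and $L$ force a tradeoff between the sharpness of the first two terms, so producing the $1/\varepsilon$ scaling in $C_1/(\varepsilon n^3)$ together with the $\varepsilon^{3/5}\lambda^{-2}$ prefactor on $\pi_H(n)$ in the middle term, while retaining a universal factor $1 - c_*$ in the third, requires the joint choice of parameters to be made precisely.
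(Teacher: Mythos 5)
Your decomposition into three events is the right shape, and the treatment of the first two terms (mass-transport bound for $A(\kappa)$; conditioning on a thin cluster slice and applying Lemma~\ref{lem:nofurther}) matches the paper's handling of the analogous terms. The gap is in the third term, where you apply Theorem~\ref{thm:mainextend} \emph{once}, at the single scale $k=n$. This forces the feasibility condition $\kappa \leq 2 c_* t^2/n^4$ that you correctly identify but do not resolve, and in fact it \emph{cannot} be resolved for the stated exponents: to make the first term $\leq C_1/(\varepsilon n^3)$ you need $\kappa \gtrsim \varepsilon$; to make the second term $\leq C_2 \varepsilon^{3/5}\lambda^{-2}\pi_H(n)$ you need $t \lesssim \varepsilon^{3/5} n^2$; and plugging these into $\kappa \leq 2c_* t^2/n^4$ gives $\varepsilon \lesssim \varepsilon^{6/5}$, which fails whenever $\varepsilon$ is small. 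The best a one-shell argument can achieve is $t \asymp \sqrt{\kappa/(2c_*)}\,n^2 \asymp \varepsilon^{1/2} n^2$, degrading the middle term to order $\varepsilon^{1/2}\lambda^{-2}\pi_H(n)$. (That weaker bound would in fact still suffice for the downstream iteration proving $\pi_H(n)\le Cn^{-3}$, with adjusted constants, but it is not the stated lemma.)

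The paper closes this gap with a \emph{multi-shell} counting argument, which is the idea your proposal is missing. Instead of looking at a single scale, it chooses $L=\varepsilon^{3/10}n$ and an arithmetic progression $k_i = (1+\lambda/4)n + iL$ with about $(\lambda/4)\varepsilon^{-3/10}$ values of $i$, and defines the ``bad'' set $\mathfrak{I}$ of indices at which $X(k_i) > L^2$ yet $\#\mathsf{A}_0^{out}(n,k_i,L) < c_* L^4$. By Theorem~\ref{thm:mainextend}, each $i$ lies in $\mathfrak{I}$ with probability at most $(1-c_*)\pi_H(n)$, so $\E\#\mathfrak{I} \le (1-c_*)\pi_H(n)\,[(\lambda/4)\varepsilon^{-3/10}+1]$. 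On the other hand, on $\cD_2$ the annuli $Ann_H(k_i,k_i+L)$ are disjoint and their total contribution to $\send$ is below $\varepsilon n^4$, so at most $\varepsilon n^4/(c_*L^4) = c_*^{-1}\varepsilon^{-1/5}$ indices can have $\#\mathsf{A}_0^{out}\ge c_* L^4$; since $\varepsilon^{-1/5}\ll\varepsilon^{-3/10}$ for small $\varepsilon$, nearly all indices belong to $\mathfrak{I}$. Markov's inequality then bounds $\prob(\cD_2)\le (1-c_*/2)\pi_H(n)$. The crucial point is that spreading the ``send'' budget across many disjoint shells lets one take the per-shell threshold $L^2$ (and hence the $\cD_1$-threshold $t$) substantially smaller than a single-shell argument permits, which is precisely how the $\varepsilon^{3/5}$ exponent is obtained rather than the weaker $\varepsilon^{1/2}$.
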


We first prove \eqref{eq:hstovary} assuming the veracity of Lemma \ref{lem:threeterms} and then establish the lemma.

\begin{proof}[Proof of \eqref{eq:hstovary}]
  We begin by choosing $\lambda$ small enough to make the third term of \eqref{eq:mainiteqn} negligible. Namely, fix $0 <\lambda <1/2$ such that
  \begin{equation}
    \label{eq:makelambda}
    (1+\lambda) ^{3} (1-c_1) \leq (1-c_1/2)\ .
  \end{equation}
We will bootstrap a bound for $\pi_H(n)$ assuming it holds for $\pi_H(m),$ $m < n$. To this end, set $n_0:= \lceil 8\lambda^{-1} + c_*^{-1}\rceil$ and let $K > 0$ be a large constant such that
\begin{equation*}
  \pi_H(m) \leq K/m^3, \quad \text{for all } 1 \leq m \leq n_0\ .
\end{equation*}
We will also enlarge $K$ if necessary so that:
\begin{equation}
  \label{eq:makeKlarge}
  \max\left\{\frac{C_1 (36 C_2)^{5/3}}{c_1^{5/3}\lambda^{10/3}}\ ,\,
  \frac{C_1}{\varepsilon_0}\right\} \leq c_1 K / 64\ .
\end{equation}
 We show inductively that, for each $m \geq 0$, 
\begin{equation}
  \label{eq:hstovaryscale}
  \pi_H(n_0 (1+\lambda)^{m+1}) \leq K/(n_0(1+\lambda)^{m+1})^3 \quad \text{assuming } \pi_H(n_0(1+\lambda)^m) \leq K / (n_0(1+\lambda)^m)^3\ .
\end{equation}

Setting $n = n_0(1+\lambda)^m$, we apply \eqref{eq:mainiteqn} with the choice
\[\varepsilon = \min\left\{\varepsilon_0\ ,\, \frac{c_1^{5/3} \lambda^{10/3}}{\left(36 C_2 \right)^{5/3}} \right\}\ . \]
Note that $(1+\lambda)^3 \leq 8$ , so by the bound \eqref{eq:makeKlarge} we have
\[\text{First term of \eqref{eq:mainiteqn}} \leq \frac{c_1 K}{8[(1+\lambda)n]^3}\ . \]
A direct calculation similarly gives
\begin{align*}\text{Second term of \eqref{eq:mainiteqn}} &\leq \frac{2 c_1 K}{9[(1+\lambda)n]^3}\ ; \\
\text{Third term of \eqref{eq:mainiteqn}} &\leq \frac{K}{[(1+\lambda)n]^3} \left[1 - c_1 / 2 \right] \ .
\end{align*}
Pulling the three bounds above together completes the proof of \eqref{eq:hstovaryscale}.

To finish the argument for \eqref{eq:hstovary}, let $n > n_0$ be arbitrary and fix $m$ to be the largest integer such that $N := (1+\lambda)^m n_0 \leq n$. Note that, since $(1 + \lambda) \leq 2$, we have $N \geq n/2$. Using \eqref{eq:hstovaryscale}, \eqref{eq:makeKlarge}, and the monotonicity of $\pi_H$, we see
\begin{align*}
  \pi_H(n) \leq \pi_H(N) \leq K N^{-3} \leq 8 K n^{-3}\ ,
\end{align*}
establishling \eqref{eq:hstovary} with $C = 8K$.
\end{proof}

We now prove Lemma \ref{lem:threeterms}.
\begin{proof}[Proof of Lemma \ref{lem:threeterms}]
Fix $\lambda$ as in the statement of the lemma.  If $\varepsilon \leq n^{-1}$, then the above bound is simple using the one-arm exponent. Indeed, using \eqref{eq:onearmprob} we see
  \begin{align*}
    \pi_H((1+\lambda)n)\leq \pi_H(n) \leq  \pi(n) \leq \frac{\Cr{c:armhigh}}{n^2} \leq \frac{\Cr{c:armhigh}}{\varepsilon n^3}\ ,
  \end{align*}
and we are done. Otherwise, we will prove the bound by breaking up the connection event to $S'((1+\lambda)n)$, depending on the structure of the cluster of $0$.

  Recall the definition of the event $A(\kappa)$ in \eqref{eq:badsendevent} and the definition of the mass-transport rule $\mass$ above it.  We write $X(k)$ for  $X_Q(D,0)$ with $D = B_H(k)$ and $Q = S'(k)$, where $k$ is an integer satisfying $(1+\lambda/4) n \leq k \leq (1+\lambda/2)n$. The reason for emphasizing the $k$-dependence is that we will wish, in our definition of $\mathcal{D}_1$ below, to consider the first such integer value of $k$ for which $X(k)$ is small. The perspective here is similar to that of the proof of Lemma \ref{lem:get}, but from the perspective of vertices receiving mass instead of sending it. 

Given a value of $\varepsilon \in (n^{-1},1)$, we define $L = \varepsilon^{3/10} n$ and the events
\begin{align*}
  \cD_1(\varepsilon) &:= \left\{\exists k \in [n + \lambda n/4, n + \lambda n / 2]:\, X(k) \leq L^2 \text{ and } 0 \sa{\Zd_+} S'((1+\lambda)n) \right\}\ ;\\
  \cD_2(\varepsilon) &:= \left\{\forall k \in [n + \lambda n/4, n + \lambda n / 2],\, X(k) \geq L^2\ , \text{ and } \send < \varepsilon n^4 \right\}\ .
\end{align*}
The union bound and \eqref{eq:badsendub} give
\begin{align*}
  \pi_H((1+\lambda) n) &\leq \prob(A(\varepsilon)) + \prob(\cD_1(\varepsilon)) + \prob(\cD_2(\varepsilon))\\
&\leq \frac{C}{\varepsilon n^3} + \prob(\cD_1(\varepsilon)) + \prob(\cD_2(\varepsilon))\ .
\end{align*}
It suffices to show that the two $\prob(\mathcal{D}_i)$ terms above have upper bounds of the form of the second and third terms of \eqref{eq:mainiteqn}.

To bound the second term, let $I$ denote the (random) smallest integer value of $k$ as in the definition of $\cD_1$ such that $0 < X(k) \leq L^2$. Note that on $\cD_1$ we never have $X(k) = 0$, so we set $I = 0$ whenever some $X(k)$ is equal to zero. We explore the cluster of the origin in successive half-space boxes $B_H(k)$ until reaching $k = I$. At this point, the probability of further connection to $S'(n(1+\lambda))$ is, by Lemma \ref{lem:nofurther}:
\begin{align*}
  \prob(\cD_1(\varepsilon)) &= \sum_{k \in [n(1 + \lambda/4), n(1 + \lambda/2)]} \,\,\sum_{\cC \in \{I = k\}} \prob(0 \stackrel{\Z^d_+}{\longleftrightarrow} S'(n(1+\lambda)) ,\,  C_{B_H(k)}(0) = \cC)\\
  &\leq L^2 \pi(\lambda n / 4) \sum_{k \geq n(1 + \lambda/4)}\prob(I = k) \leq \frac{ C\varepsilon^{3/5}}{\lambda^2} \pi_H(n)\ ,
\end{align*}
where the second sum is over $\cC$ giving $I = k$ and where we have used \eqref{eq:onearmprob} to bound $\pi$.

The bound on $\cD_2$ is where we use Theorem \ref{thm:mainextend}, namely \eqref{eq:outextend}.
Since $\varepsilon > n^{-1}$, we have $L \geq n^{7/10}$, and so our choice of $L$ from above is a valid choice of $L$ in the statement of the theorem.

To set up our application of Theorem \ref{thm:mainextend}, we consider a sequence of values of $k$ and corresponding annular regions in which extensions can be made. For each integer $i \in [0, \frac{\lambda}{4} \varepsilon^{-3/10}]$ set $k_i = (1+\lambda/4)n + i L$ and note that $(1+\lambda/4) n \leq k_i \leq (1 + \lambda/2) n$. Recall that $c_*$ is the constant from Theorem \ref{thm:mainextend} and set 
\[\mathfrak{I} = \{i: \, X(k_i) > L^2,\, \#\mathsf{A}_0^{out}(n,k_i, L) < c_* L^4 \}\ . \]
Here we recall the notation
\[\mathsf{A}_0^{out}(n,k, L ) :=  C_{\Anns(n/2,4n)}(0) \cap Ann_H(k,k+L)\ . \]

Note that (by disjointness of the annuli $Ann_H(k_i, k_i + L)$)
\[c_{*} L^4 \# \{i: \, \#\mathsf{A}_0^{out}(n,k_i, L) \geq c_* L^4 \} \leq \send,\]
and so on $\cD_2(\varepsilon)$ we have
\begin{equation}
  \label{eq:shellindbd}
  \# \{i: \, \#\mathsf{A}_0^{out}(n,k_i, L) \geq c_* L^4 \} \leq \frac{1}{c_* \varepsilon^{1/5}}\ .
\end{equation}
In particular, on $\cD_2$, the cardinality of $\mathfrak{I}$ must be large; namely,
\begin{equation}
  \label{eq:Imustbelarge}
  \text{on }\cD_2(\varepsilon),\quad \#\mathfrak{I} \geq  \left\lfloor (\lambda/4) \varepsilon^{-3/10} \right\rfloor - c_*^{-1} \varepsilon^{-1/5}\ .
\end{equation}

On the other hand, using Theorem \ref{thm:mainextend} on each value of $i$ and summing, we have
\begin{equation}
  \label{eq:shellindbd2}
  \E \# \mathfrak{I} \leq (1-c_*) \pi_H(n) [(\lambda/4) \varepsilon^{-3/10} + 1] \ .
\end{equation}
We may now apply Markov's inequality with the bound \eqref{eq:shellindbd2} and compare to the lower bound for $\E \#\mathfrak{I}$ in terms of $\cD_2$ which follows from \eqref{eq:Imustbelarge}. This yields
\begin{equation}
\label{eq:Itoolarge}
\left[\, (\lambda/4) \varepsilon^{-3/10}  - c_*^{-1} \varepsilon^{-1/5} - 1 \right] \prob \left(D_2(\varepsilon) \right) \leq (1-c_*) \pi_H(n) [(\lambda/4) \varepsilon^{-3/10}+1]\ . 
\end{equation}
If $\varepsilon$ is sufficiently small (relative only to $\lambda$ and $c_*$), the left-hand side of \eqref{eq:Itoolarge} is at least
\[\frac{1-c_*}{1-c_*/2}\left[(\lambda/4) \varepsilon^{-3/10} + 1 \right] \prob(\cD_2(\varepsilon))\ .\]
Comparing the above to \eqref{eq:Itoolarge} gives $\prob(\cD_2(\varepsilon)) \leq (1-c_*/2) \pi_H(n)$ and completes the proof.
\end{proof}

\section{Half-space two-point function and cluster sizes}\label{sec:twoptclustend}
\subsection{Preliminaries and two-point function}
To better separate the proofs of the individual pieces, we restate the contents of part (b) of Theorem \ref{Critical Exponents}, consisting of bounds on the two-point function in $\Zd_+$.
\begin{thm}\label{thm:hstwopt}
	There exists a constant $C_1 > 0$ such that
	\begin{equation}
	\label{eq:twopthsupper}
	\tau_H(0,\vx) \leq C_1 \|\vx\|^{1-d}\qquad \text{ uniformly in } \vx \in \Zd_+\ .
	\end{equation}
	Fix $\varepsilon > 0$. Then there exists a constant $c_1 = c_1(\varepsilon)$ such that a matching lower bound holds for all points macroscopically far from $S(0)$, relative to $\varepsilon$:
	\begin{equation}
	\label{eq:twopthslower}
	\tau_H(0,\vx) \geq c_1 \|\vx\|^{1-d}\qquad \text{ uniformly in } \vx \in \Zd_+ \text{ with } x(1) \geq \varepsilon \|\vx\| .
	\end{equation}
	There exist constants $c_2, C_2 > 0$ such that the following holds uniformly in $\vy \in \Zd_+$ with $y(1) = 0$:
	\begin{equation}\label{eq:doublebdy}c_2 \|\vy\|^{-d} \leq  \prob(0 \sa{\Zd_+} \vy) \leq C_2 \|\vy\|^{-d}\ . \end{equation}
	
\end{thm}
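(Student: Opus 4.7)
The plan is to build on part (a) of Theorem \ref{Critical Exponents}, in particular its strengthening \eqref{eq:armtofarside}, together with the extensibility tools of Section \ref{sec:extend} and the box two-point estimates Theorems \ref{thm:boxcon} and \ref{thm:newboxtwopt}. I would address the three claims in the order \eqref{eq:twopthslower}, \eqref{eq:twopthsupper}, then \eqref{eq:doublebdy}.

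For the lower bound \eqref{eq:twopthslower}, set $n = \|\vx\|$. Equation \eqref{eq:armtofarside} yields a ``good event''
\[\mathcal G := \{0 \sa{Rect(n)} S(n),\ \#(C_{Rect(n)}(0)\cap S(n)) \ge c n^2\}\]
with $\prob(\mathcal G)\ge c n^{-3}$. Conditioning on $C_{Rect(n)}(0)$ satisfying $\mathcal G$ and applying Lemma \ref{thm:regthm} to discard $K$-irregular candidates on $S(n)$, then restricting to those macroscopically far from $S(0)$, I retain a family $\mathcal Z$ of $\gtrsim n^2$ regular candidates. Since $x(1)\ge\varepsilon n$, each $\vz\in\mathcal Z$ admits a rectangle $R_\vz\subseteq\Zd_+\setminus Rect(n)$ of aspect ratio bounded in terms of $\varepsilon$ and containing $\{\vz,\vx\}$ in its bulk, so Theorem \ref{thm:newboxtwopt} gives $\prob(\vz \sa{R_\vz}\vx)\ge c(\varepsilon) n^{2-d}$. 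An edge-modification argument in the spirit of Claim \ref{clam:e2} and Lemma \ref{lem:yrestfirst}(2B), combined with a conditional second-moment step exploiting the edge-disjointness (so that the extension events are independent of $\mathcal G$), then gives $\prob(\mathcal G\cap\{\exists \vz\in\mathcal Z\colon \vz\sa{R_\vz}\vx\})\gtrsim n^{-3}\cdot n^2\cdot n^{2-d}=n^{1-d}$. The delicate technical point is bounding the pair correlations between candidates $\vz,\vz'\in\mathcal Z$ simultaneously connecting to $\vx$; this requires constructing the $R_\vz$'s so that they overlap only near $\vx$, and then applying tree-graph/BK bounds as in Lemma \ref{lem:yrestfirst}.

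The upper bound \eqref{eq:twopthsupper} I expect to be the main technical obstacle. A naive BK decomposition of $\{0\sa{\Zd_+}\vx\}$ through a midway hyperplane $S(x(1)/2)$ yields only $\tau_H(0,\vx)\le C n^{3-d}$ (off by $n^2$), since it discards the half-space constraint on the second leg of the connection. My proposed strategy is a ``last-exit from $S(0)$'' BK decomposition: on $\{0\sa{\Zd_+}\vx\}$ there is a vertex $\vw\in S(0)\cap C_H(0)$ such that $\{\vw,\vw+\ve_1\}$ is open and $\vw+\ve_1\sa{\Zd_+(1)}\vx$, where $\Zd_+(1):=\{\vy : y(1)\ge 1\}$. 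Since $p_c(\Zd)<p_c(\Z^{d-1})$, percolation restricted to $S(0)\cong\Z^{d-1}$ is subcritical at the ambient density, so $\prob(0\sa{S(0)}\vw)$ decays exponentially in $\|\vw\|$; BK and translation invariance then give
\[\tau_H(0,\vx)\;\le\;p_c\sum_{\vw\in S(0)}\prob(0\sa{S(0)}\vw)\cdot\tau_H(\vw,\vx-\ve_1).\]
To extract the correct exponent I would combine iterative application of this identity with a mass-transport argument in the spirit of Section \ref{sec:ubonearm} --- exploiting a rule like $\mass(0,\vx)=\indi_{\vx\in C_H(0)\cap S(h)}$ at a well-chosen height $h$, together with the input $\pi_H(n)\le C n^{-3}$ --- to produce the needed $n^{-1}$ improvement over the naive estimate. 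The hardest step is controlling the accumulated constants under iteration and keeping boundary-correction terms from proliferating.

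For the doubly-boundary case \eqref{eq:doublebdy}, the lower bound follows by running the \eqref{eq:twopthslower} construction symmetrically from $0$ and from $\vy$ on disjoint edge regions: each side yields probability $\asymp n^{-3}$ with $\gtrsim n^2$ candidates at macroscopic height, and the $\gtrsim n^4$ candidate pairs are each joined in the central slab with probability $\gtrsim n^{2-d}$ via Theorem \ref{thm:newboxtwopt}, giving $n^{-3}\cdot n^{-3}\cdot n^4\cdot n^{2-d}=n^{-d}$ via the analogous symmetric second-moment argument. The upper bound is more delicate than a direct iteration of \eqref{eq:twopthsupper} (which is not tight for points near $S(0)$); I would prove it via a boundary-targeted mass-transport argument yielding $\E\,\#(C_H(0)\cap S(0)\cap B(n))\le C$ (compatible with the conjectured $\tau_H\asymp\|\cdot\|^{-d}$ on $S(0)$), followed by a regularity/concentration argument in the spirit of Lemma \ref{thm:regthm} applied within $S(0)$ to distribute this averaged bound into the pointwise $\tau_H(0,\vy)\le C\|\vy\|^{-d}$.
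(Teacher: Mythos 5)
Your plan for the lower bound \eqref{eq:twopthslower} is broadly in the spirit of the paper: condition on a directed half-space arm via \eqref{eq:armtofarside}, discard irregular boundary sites with Lemma~\ref{thm:regthm}, and extend one of the surviving candidates on $S(n)\cap Rect(n)$ to $\vx$ using Theorem~\ref{thm:boxcon}/\ref{thm:newboxtwopt}. The one simplification you miss is that the paper defines the counter $Y^K_{Q_2}$ via a pivotality requirement on the exit edge (as in Lemma~\ref{lem:yrestfirst}), which forces $Y^K_{Q_2}\le 1$ a.s.\ and makes the second-moment step essentially free; you instead face the pair-correlation bookkeeping you correctly flag as delicate. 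The paper's lower bound in \eqref{eq:doublebdy} is also obtained one-sidedly (extend from $0$'s arm to $\vy$, using the just-proved $\tau_H\gtrsim n^{1-d}$ for the last leg), rather than running symmetric arm events from both endpoints; your symmetric scheme is a genuinely different route and could work, but it is not what the paper does and is harder to control.

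The real gap is in \eqref{eq:twopthsupper}. Your ``last-exit from $S(0)$'' decomposition
\[\tau_H(0,\vx)\le p_c\sum_{\vw\in S(0)}\prob(0\sa{S(0)}\vw)\,\tau_H(\vw,\vx-\ve_1)\]
is not a valid inequality: if $\vw$ is the last vertex of an open path $\gamma$ on $S(0)$ when traversed from $0$ to $\vx$, the \emph{initial} segment of $\gamma$ from $0$ to $\vw$ is only constrained to lie in $\Zd_+$, not in $S(0)$ --- it may dip arbitrarily far into the bulk before returning. What BK actually gives is $\tau_H(0,\vw)$ in place of $\prob(0\sa{S(0)}\vw)$, which is exactly the doubly-boundary quantity from \eqref{eq:doublebdy} and is therefore circular at this stage. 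Thus the subcriticality of the $\Z^{d-1}$ slice is a red herring, and the whole ``iterate plus mass-transport at height $h$'' program built on this identity does not get off the ground. Separately, the rule $\mass(0,\vx)=\indi_{\vx\in C_H(0)\cap S(h)}$ must be extended translation-covariantly (as the paper does for $\mathsf{A}^{out}_0$), and then $\get$ involves clusters in \emph{shifted} half-spaces $\vz+\Zd_+$, not the fixed $\Zd_+$ --- so the estimate is not as transparent as you suggest.

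What the paper does instead: it first proves (via the mass-transport bound $\E\get\le Cn$ of Lemma~\ref{lem:get} together with the extensibility statement \eqref{eq:rectextend} of Theorem~\ref{thm:mainextend}) that
\[\E\,X_{Q_1}(Rect(n),0)=\E\,\#\bigl(C_{Rect(n)}(0)\cap\partial_{\Zd_+}Rect(n)\bigr)\le Cn^{-1},\]
which is \eqref{eq:hsXexp} of Lemma~\ref{lem:fromloweradhoc}. With $8n=\|\vx\|$, a single BK decomposition $\{0\sa{Rect(n)}\vz\}\circ\{\vz\lra\vx\}$ through $\vz\in\partial_{\Zd_+}Rect(n)$ then gives $\tau_H(0,\vx)\le Cn^{2-d}\,\E X_{Q_1}\le Cn^{1-d}$; no subcriticality of the boundary slice, no iteration. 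The upper bound of \eqref{eq:doublebdy} then falls out by running the identical decoupling once more with \eqref{eq:twopthsupper} plugged in for the second leg, giving $Cn^{1-d}\cdot Cn^{-1}=Cn^{-d}$ --- much lighter than your mass-transport-plus-regularity program, and it also sidesteps the nontrivial issue of converting an averaged bound like $\E\,\#(C_H(0)\cap S(0)\cap B(n))\le C$ into the pointwise estimate $\tau_H(0,\vy)\le C\|\vy\|^{-d}$.
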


Our proof of Theorem \ref{thm:hstwopt} relies crucially on the result of Theorem \ref{thm:boxcon} as input. We first prove a lemma which is in some respects a half-space analogue of Lemma \ref{OneCluster} and Lemma \ref{EXnbd}. For the statement, recall the definition $Rect(n) = [0, n] \times [-4n, 4n]^{d-1}$.

\begin{lem}\label{lem:fromloweradhoc}
	Let $D = Rect(n)$, and let $Q_1 = \partial_{\Zd_+} Rect(n)$ and $Q_2 = S(n) \cap Rect(n)$ (the ``top'' of  $Rect(n)$). Define $X_Q(D,0)$ as usual for $Q = Q_1, Q_2$.
	There exists $C_2 > 0$ such that, uniformly in $n$,
	\begin{equation}\E X_{Q_1}(D,0) \leq C_2 n^{-1}\ . 
	\label{eq:hsXexp}
	\end{equation}
	Recall the definition of $K_0$: the constant from Theorem \ref{thm:regthm}, chosen for the growing sequence $(Rect(n))_n$. There exist $\eta,\, c_2 > 0$ and such that the following holds uniformly in $n$ and in $K > K_0$:
	\begin{equation}
	\label{eq:hsXdens} \prob\left(\eta n^2 < X_{Q_2}^{K-reg}(D,0) \leq X_{Q_1}(D,0) <  \eta^{-1} n^2 \right) \geq c_2 n^{-3}\ . \end{equation}
\end{lem}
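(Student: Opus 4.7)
The plan is to combine the established bound $\pi_H(n) \le C n^{-3}$ (from Sections \ref{sec:pihlb} and \ref{sec:ubonearm}) with the extensibility estimate \eqref{eq:rectextend} and the cluster regularity theorem, Lemma \ref{thm:regthm}. The first-moment bound \eqref{eq:hsXexp} is structurally parallel to the treatment of $\E X_\vz(j)$ in Lemma \ref{lem:get}, while the density bound \eqref{eq:hsXdens} parallels Lemma \ref{OneCluster} but uses the strengthened one-arm lower bound \eqref{eq:armtofarside} as input.

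For \eqref{eq:hsXexp}, I will split
\[
\E X_{Q_1}(D,0) = \E[X_{Q_1}(D,0);\, 0 < X_{Q_1}(D,0) < n^2/2] + \E[X_{Q_1}(D,0);\, X_{Q_1}(D,0) \ge n^2/2].
\]
The first term is at most $(n^2/2)\prob(X_{Q_1}(D,0) \ge 1) \le (n^2/2)\pi_H(n) \le Cn^{-1}$, because $\{X_{Q_1}(D,0) \ge 1\}$ is contained in the half-space arm event $\{0 \sa{\Zd_+} S'(n)\}$. For the second term, \eqref{eq:rectextend} gives, for each $M \ge n^2/2$,
\[
\prob(X_{Q_1}(D,0) = M) \;\le\; c_*^{-1}\prob\bigl(X_{Q_1}(D,0) = M,\; \#\mathsf{A}^{out}_0(4n,4n,8n) > c_* M n^2 \bigr).
\]
On the event on the right $M < (c_* n^2)^{-1}\#\mathsf{A}^{out}_0(4n,4n,8n)$; summing $M\prob(X_{Q_1}(D,0) = M)$ over $M \ge n^2/2$ and taking expectation yields
\[
\E[X_{Q_1}(D,0);\, X_{Q_1}(D,0) \ge n^2/2] \;\le\; \frac{1}{c_*^2 n^2}\,\E\#\mathsf{A}^{out}_0(4n,4n,8n).
\]
The key input is $\E\#\mathsf{A}^{out}_0(4n,4n,8n) \le Cn$, which follows from a mass-transport argument identical in form to Lemma \ref{lem:get} applied to the translation-covariant rule $\mass(0,\vx) = \indi_{\vx \in \mathsf{A}^{out}_0(4n,4n,8n)}$: one decomposes $\get$ over the $O(n)$ slices of $-Ann_H(4n,12n)$ and applies \eqref{eq:inextend} on each slice exactly as in the derivation of \eqref{eq:XtoY}, producing a bound of $C$ per slice. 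Combining the two contributions gives $\E X_{Q_1}(D,0) \le Cn^{-1}$.

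For \eqref{eq:hsXdens}, I use \eqref{eq:armtofarside} to obtain $c_0 > 0$ with $\prob(X_{Q_2}(D,0) \ge c_0 n^2) \ge c_0 n^{-3}$. Applying Lemma \ref{thm:regthm} to the dilation sequence $(Rect(n))_n$ and the dilated subrectangle $Q_2 \subseteq \partial\, Rect(n)$, and choosing $K$ larger than a dimension-dependent constant, gives
\[
\prob\bigl(X_{Q_2}(D,0) \ge c_0 n^2,\; X_{Q_2}^{K-irr}(D,0) \ge \tfrac{1}{2} X_{Q_2}(D,0)\bigr) \;\le\; Cn^d\exp\bigl(-c\log^2(c_0 n^2)\bigr) \;=\; o(n^{-3}),
\]
so $\prob(X_{Q_2}^{K-reg}(D,0) > (c_0/2)n^2) \ge (c_0/2)n^{-3}$ for all large $n$. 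Combining \eqref{eq:hsXexp} with Markov's inequality, $\prob(X_{Q_1}(D,0) \ge \eta^{-1}n^2) \le C\eta n^{-3}$; choosing $\eta > 0$ small enough that $\eta \le c_0/2$ and $C\eta \le c_0/4$, a union bound delivers \eqref{eq:hsXdens} with $c_2 = c_0/4$.

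The main obstacle is the first-moment bound \eqref{eq:hsXexp}: naively bounding $\prob(0 \sa{Rect(n)}\vx)$ by the unrestricted two-point function $\tau(0,\vx)$, or by Cauchy--Schwarz against the second moment of $\#C(0)$ in the style of \eqref{eq:lastterm}, produces only $O(1)$ rather than $O(n^{-1})$. The extra factor of $n^{-1}$ is recovered by routing the tail through the first moment of $\#\mathsf{A}^{out}_0(4n,4n,8n)$, which scales like $n$ rather than $n^2$ precisely because the set is confined to a half-space region and thus feeds on $\pi_H(n) \le Cn^{-3}$ in the per-slice estimate rather than on $\pi(n) \le Cn^{-2}$.
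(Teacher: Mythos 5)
Your proposal is correct and follows essentially the same approach as the paper: for \eqref{eq:hsXexp} you split on $\{X_{Q_1}\ge n^2/2\}$, bound the small piece by $(n^2/2)\pi_H(n)$, and route the large piece through $\E\#\mathsf{A}^{out}_0(4n,4n,8n)\le Cn$ (which the paper obtains by directly citing Lemma \ref{lem:get} rather than re-running the mass-transport decomposition as you do), combined with the extensibility bound \eqref{eq:rectextend}; for \eqref{eq:hsXdens} you combine \eqref{eq:armtofarside}, Lemma \ref{thm:regthm}, and Markov's inequality on \eqref{eq:hsXexp} exactly as the paper does, differing only in whether the regularity theorem is applied explicitly or used to reduce to the unregularized inequality \eqref{eq:hsxsuff}.
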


\begin{proof}
	We first show the bound on the expectation. By Lemma \ref{lem:get} (recall the notation of $\mathsf{A}^{out}$ defined before Theorem \ref{thm:mainextend}), we have
	\begin{equation}
	\label{eq:masstosf}
	\E \mathsf{A}^{out}_0(4n,4n,8n) \leq C n\ .
	\end{equation}
	By \eqref{eq:rectextend} from Theorem \ref{thm:mainextend}, we have
	\[\E[\mathsf{A}^{out}_0(4n,4n,8n) \mid X_{Q_1}(D,0)] \geq  c_*^2 X_{Q_1}(D,0) n^2  \quad \text{on } \{X_{Q_1}(D,0) \geq n^2/2\}\ . \]
	Combined with \eqref{eq:masstosf}, the above gives
	\[\E [X_{Q_1}(D,0); X_{Q_1}(D,0) \geq n^2/2] \leq C n^{-1}\ . \]
	On the other hand,
	\[\E[X_{Q_1}(D,0); X_{Q_1} < n^2/2] \leq (n^2/2) \pi_H(n) \leq C n^{-1}\ , \]
	where we have used part (a) of Theorem \ref{Critical Exponents}. This completes the proof of \eqref{eq:hsXexp}.
	
	To show \eqref{eq:hsXdens}, we note that by Theorem \ref{thm:regthm} it suffices to show
	\begin{equation}
	\label{eq:hsxsuff} \prob\left(\eta n^2 < X_{Q_2}(D,0) < X_{Q_1}(D,0)< \eta^{-1} n^2 \right) \geq c n^{-3} \text{ for all $n$,} \end{equation}
	for some $c, \eta > 0$. By \eqref{eq:armtofarside}, we have $\prob(X_{Q_2} > \eta n^2) \geq c n^{-3}$ for some fixed small $c$ (independent of $\eta$ and $n$) for $\eta$ sufficiently small. By \eqref{eq:hsXexp} and the Markov inequality, $\prob(X_{Q_1}(D,0) > \eta^{-1} n^2)$ is at most $C \eta n^{-3}$. 
	
	Bounding the probability in \eqref{eq:hsxsuff} by $\prob(X_{Q_2}(D,0) > \eta n^2) - \prob(X_{Q_1}(D,0) > \eta^{-1} n^2)$ and taking $\eta$ sufficiently small completes the proof.
	
	
\end{proof}

\begin{proof}[Proof of \eqref{eq:twopthsupper} and the upper bound of \eqref{eq:doublebdy}]
	We first prove \eqref{eq:twopthsupper}. Let $8n = \|\vx\|$. Note that if $0 \sa{\Zd_+} \vx$, there exists a $\vz \in \partial_{\Zd_+} Rect(n)$ such that $\{0 \sa{Rect(n)} \vz \} \circ \{\vz \lra \vx\}$ occurs. Taking a union bound and using the BK inequality gives
	\begin{align} \prob(0 \sa{\Zd_+} \vx) &\leq \sum_{\vz \in \partial_{\Zd_+}Rect(n)} \prob(0 \sa{Rect(n)} \vz) \prob(\vz \lra \vx) \label{eq:twiceslice}\\
	&\leq C n^{2-d} \sum_{\vz \in \partial_{\Zd_+}Rect(n)} \prob(0 \sa{Rect(n)} \vz) = C n^{2-d} \E X_{Q_1}(D,0) \label{eq:twiceslice2} \end{align}
	with $D = Rect(n)$ and $Q_1 = \partial_{\Zd_+}Rect(n)$, and where we have used \eqref{eq:twopt}. Applying \eqref{eq:hsXexp} completes the proof.
	
	The upper bound of \eqref{eq:doublebdy} follows from a decoupling argument similar to the one used for \eqref{eq:twopthsupper}, this time using \eqref{eq:twopthsupper} as input. As before, letting $8n = \|\vy\|$, for $0 \sa{\Zd_+} \vy$ to hold, there must be a $\vz \in \partial_{\Zd_+}Rect(n)$ such that $\{0 \sa{Rect(n)} \vz\} \circ \{\vz \sa{\Zd_+} \vy\}$ holds. This gives \eqref{eq:twiceslice} with $\vx$ replaced by $\vy$ and the connection from $\vz$ to $\vy$ restricted to $\Zd_+$. Now the same reasoning used to produce \eqref{eq:twiceslice2}, but now using the upper bound from \eqref{eq:twopthsupper} to estimate $\prob(\vz \sa{\Zd_+} \vy)$, gives the analogue of \eqref{eq:twiceslice2}, with $C n^{2-d}$ replaced by $C n^{1-d}$. Using \eqref{eq:hsXexp} as before completes the proof.
	
\end{proof}

\begin{proof}[Proof of \eqref{eq:twopthslower} and the lower bound of \eqref{eq:doublebdy}]
	We first prove \eqref{eq:twopthslower}. The argument is a modification of the proof of Theorem \ref{thm:boxcon}: roughly, we condition on $0$ having an arm to distance of order $n \approx \|\vx\|$, and then show an open connection from $\vx$ to this arm can be made.  There are three major modifications. First, if the arm from $0$ terminated too close to $S(0)$ (more carefully speaking: if $C_{B_H(n)}(0) \cap S'(n)$ had too few vertices at macroscopic distance from $S(0)$), this connection would not be possible; because of the lack of symmetry in the half-space, we must resort to the second part of Lemma \ref{lem:fromloweradhoc} to direct this arm. Second, there is no inductive improvement needed in the argument. 
	Third, we must rely on the result of Theorem \ref{thm:boxcon} as input to insure the further connection to $\vx$ does not cross the half-space boundary (the earlier argument required only information about the unrestricted $\tau$ as input in the base case).
	
	Fix $\varepsilon >0$ and suppose $\vx \in Ann_H(8n,16n)$ with $x(1) \geq \varepsilon n$. Let $D = Rect(n)$, and let $X_{Q_2} = X_{Q_2}(D,0)$ be as in the statement of Lemma \ref{lem:fromloweradhoc}. Let $K > K_0$ be fixed, to be chosen.   Define $Y_{Q_2}^K = Y_{Q_2}^K(\vx)$ to be the number of $\vz \in Q_2$ such that a) $\vz \sa{D} 0$, b) $\vz \in REG_D(K)$ (recall Definition \ref{RegDef}), and c) the edge $\{\vz, \vz + \ve_1\}$ is pivotal for $\{0 \sa{\Zd_+} \vx\}$. As in the proof of Lemma \ref{lem:yrestfirst}, we have $Y_{Q_2}^K \leq 1$ a.s., since no pair of vertices $\vz_1 \neq \vz_2$ can simultaneously satisfy parts a) and c) of the definition.
	
	Defining $B_\eta$ to be the event in \eqref{eq:hsXdens}, we argue that for $K > K_0$ fixed sufficiently large,
	\begin{equation}
	\label{eq:Ybdyre}
	\E[Y_{Q_2}^K; X_{Q_2}^{K-reg} = N, B_\eta ] \geq c n^{4-d} \prob(X_{Q_2}^{K-reg} = N; B_\eta)\quad \text{ for } c = c(K),
	\end{equation}
	uniformly in $\vx \in Ann_H(8n, 16n)$ with $x(1) \geq \varepsilon n$ and in $\eta n^2 \leq N \leq \eta^{-1} n^2$. Set $\widetilde D_\vz := \vz + [K/2, K]^d$ and let $\tz$ range over vertices of $\widetilde D_\vz$; define $R_n = B_H(0, 20n).$  We show \eqref{eq:Ybdyre} by defining events
	\begin{align*}
	\cE_1(\vz, N) &= B_\eta \cap \{0 \sa{D} \vz, \vz \in REG_{D}(K)\text{ and } X_{Q_2}^{K-reg} = N\}\ ;\\
	\cE_2(\vx, \tz, \vz) &= \{\tz \sa{R_n} \vx \text{ off } C_D(\vz) \}; \quad
	\cE_3(\vz, \tz) =	\{C(\vz) \cap C(\tz) = \varnothing\} \ .		
	\end{align*}
	
	Similar arguments to those of Claim \ref{clam:e2} show that we can choose $K > K_0$ and find a constant $c > 0$ such that the following holds: for each $n$, each $\vx \in Ann_H(8n, 16n)$ with $x(1) \geq \varepsilon n$, and each $\vz \in Q_2$, there is a $\tz \in \widetilde D_\vz$ such that
	\begin{equation} \label{eq:threeEHSt}
	\prob(\cE_1(\vz, N) \cap \cE_2(\vx, \tz, \vz) \cap \cE_3(\vz, \tz)) \geq c n^{2-d} \prob(\cE_1(\vz, N))\ .
	\end{equation}
	A main complication in proving \eqref{eq:threeEHSt}, compared with the proof of Claim \ref{clam:e2}, comes in the bound on $\prob(\cE_1 \setminus \cE_2)$. Namely: for the analogue of \eqref{E2bd}, we bound, on the event $C_{D}(0) = \cC$,
	\begin{align}
	\prob(\tz \sa{R_n} \vx \text{ off } \cC) \geq \prob(\tz \sa{R_n} \vx) - \sum_{\vy \in \cC} \prob(\{\tz \lra \vy \} \circ \{\vy \lra \vx\} )\ .\label{eq:sumtoredon}
	\end{align}
	To show the first term of the above is at least $c(\varepsilon) n^{2-d}$ using Theorem \ref{thm:boxcon}, we use crucially the fact that $\vz$ is macroscopically distant from $S(0)$. This necessitates the condition $\vz \in Q_2$, and this ultimately requires our arm-directedness statement \eqref{eq:armtofarside}.  The second term of \eqref{eq:sumtoredon} can be bounded similarly to before: the probability that $\vy \lra \vx$ is of order $n^{2-d}$, and the sum of probabilities $\prob(\tz \lra \vy)$ is small for $K$ large using the regularity in $\cE_1$.
	
	Having established \eqref{eq:threeEHSt}, we note that an edge-modification argument again gives the existence of a constant $c_1 = c_1(K)$ such that
	\begin{align*}\prob(\vz \text{ is counted in } Y_{Q_2}^K; B_{\eta}\cap\{ X_{Q_2}^{K-reg} = N\} ) &\geq  c_1 \prob(\cE_1(\vz,N) \cap \cE_2 (\vx, \tz, \vz) \cap \cE_3(\tz, \vz))\\
	&\geq c n^{2-d} \prob(\cE_1(\vz, N))\ ,
	\end{align*}
	where $\tz$ is chosen so that \eqref{eq:threeEHSt} holds. Summing over $\vz \in Q_1$, we get
	\begin{align*}\E[Y_{Q_2}^K;\, B_\eta \cap\{ X_{Q_2}^{K-reg} = N\}]
	&\geq c n^{2-d} \sum_{\vz \in Q_2}\prob(\cE_1(\vz, N))\\
	&\geq c n^{4-d} \prob(X_{Q_2}^{K-reg} =N, B_\eta)\ ,
	\end{align*}
	which is \eqref{eq:Ybdyre}.
	
	Having established \eqref{eq:Ybdyre}, we move to complete the proof of \eqref{eq:twopthslower}. Note that $\prob(0 \sa{\Zd_+} \vx) \geq \prob(Y_{Q_2}^{K}  > 0)$. We use a conditional second-moment argument to bound the latter probability. The fact that $Y_{Q_2}^{K} \leq 1$ a.s., and an argument similar to the one used to show (2A) of Lemma \ref{lem:yrestfirst}, give
	\begin{equation}
	\label{eq:ysquaredagain}
	\E[\left(Y_{Q_2}^{K}\right)^2 \mid X_{Q_2}^{K-reg} = N, B_\eta ] \leq C n^{4-d}\ .
	\end{equation}
	Combining \eqref{eq:ysquaredagain} with \eqref{eq:Ybdyre}, we find
	\begin{align*}\prob(Y_{Q_2}^K > 0 \mid X_{Q_2}^{K-reg} = N, B_\eta) &\geq \frac{\E[Y_{Q_2}^{K} \mid X_{Q_2}^{K-reg} = N, B_\eta ]^2}{\E[\left(Y_{Q_2}^{K}\right)^2 \mid X_{Q_2}^{K-reg} = N, B_\eta ]}\\
	&\geq c n^{4-d}\quad \text{ uniformly in $n$ and $\vx \in Ann_H(8n, 16n)$ with $x(1) \geq \varepsilon n$.} \end{align*}
	Recalling that $B_{\eta}$ was the event in \eqref{eq:hsXdens} and applying the probability bound there, we see
	\[\prob(0 \sa{\Zd_+} \vx) \geq \prob(Y_{Q_2}^K > 0) \geq c n^{4-d} \prob(B_\eta) \geq c n^{1-d} \geq c \|\vx\|^{1-d} , \]
	completing the proof of \eqref{eq:twopthslower}.
	
	We now outline the proof of the lower bound of \eqref{eq:doublebdy}; the proof is similar to the above, so we describe only the major differences. Suppose $\vy$ has $y(1) = 0$ and $\vy \in Ann_H(8n, 16n)$. As before, we set $D = Rect(n)$ and let $X_{Q_2}(D,0)$ be as in Lemma \ref{lem:fromloweradhoc}, and we define $Y_{Q_2}^K$ exactly as before (with references to $\vx$ replaced by $\vy$). 
	
	The events $\cE_i$ are defined as previously. except in $\cE_2$ we ask instead that $\tz \sa{\Zd_+} \vx$ off $C_D(\vz)$. Estimates involving the probability of this connection are made using \eqref{eq:twopthslower} instead of the bound on the box-restricted two-point function; upper bounds on the probability of appropriate portions of large-loop connections are made using the upper bound of \eqref{eq:twopthsupper}. For instance, the right-hand side of  \eqref{eq:threeEHSt} is replaced by $c n^{1-d} \prob(\cE_1(\vz,N))$. This reflects the fact that \eqref{eq:sumtoredon} is replaced by
	\begin{equation}\prob(\tz \sa{\Zd_+} \vy \text{ off } \cC) \geq \prob(\tz \sa{\Zd_+} \vy) - \sum_{\zeta \in \cC} \prob(\{\tz \lra \zeta \} \circ \{\zeta \sa{\Zd_+} \vy\} )\ . \label{eq:sumtoredon2}\end{equation}
	The first term of \eqref{eq:sumtoredon2} is uniformly at least $c n^{1-d}$ by \eqref{eq:twopthslower}. $\prob(\zeta \sa{\Zd_+} \vy)$ is at most $C n^{1-d}$ by the upper bound of \eqref{eq:twopthsupper}, and again the sum of probabilities $\prob(\tz \lra \zeta)$ is small for $K$ large.
	
	Making similar adaptations to the remaining estimates, we find that the conditional (on $B_\eta$) first and second moments of $Y_{Q_2}^K$ are both of order $n^{3-d}$. A conditional second-moment argument as before gives
	\[ \prob(0 \sa{\Zd_+} \vy) \geq \prob(Y_{Q_2}^K > 0) \geq c n^{3-d} \prob(B_\eta) \geq c \|\vy\|^{-d}\ .\]

\end{proof}


\subsection{Cluster sizes}
We now prove part (c) of Theorem \ref{Critical Exponents}. For clarity, we restate the claim here as Theorem \ref{thm:hssize}.
\begin{thm}\label{thm:hssize}
	There exist constants $c, \, C > 0$ such that
	\begin{equation}
	\label{eq:hssizeasymp}c t^{-3/4} \leq \prob(\# C_{H}(0) > t) \leq C t^{-3/4}\ . \end{equation}
\end{thm}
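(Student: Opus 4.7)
For the upper bound, I would set $n = \lceil t^{1/4}\rceil$ and split
\[
\prob(\#C_H(0) > t) \le \prob\big(0 \sa{\Zd_+} S'(n)\big) + \prob\big(\#[C_H(0)\cap B_H(n)] > t\big).
\]
By part (a) of Theorem~\ref{Critical Exponents}, the first term is $\pi_H(n) \le C n^{-3} \le C t^{-3/4}$. For the second, Markov's inequality combined with the first-moment bound
\[
\E[\#(C_H(0)\cap B_H(n))] = \sum_{\vx\in B_H(n)} \tau_H(0,\vx) \le C n
\]
yields the bound $Cn/t \le C t^{-3/4}$. The first-moment estimate follows by summing the upper bounds from part~(b): the sum over $\vx$ with $x(1)\ge 1$ behaves like the $d$-dimensional integral $\int_0^n r^{1-d} r^{d-1}\,dr = O(n)$, while the sum over $\vx\in S(0)$ behaves like the $(d-1)$-dimensional integral $\int r^{-d} r^{d-2}\, dr = O(1)$ (convergent since $d-1\ge 10$).

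For the lower bound it suffices to show $\prob(\#C_H(0) \ge c n^4) \ge c n^{-3}$ uniformly in $n$; the conclusion follows by setting $t = c n^4$. The strategy is to combine \eqref{eq:hsXdens} of Lemma~\ref{lem:fromloweradhoc} with the extensibility result \eqref{eq:rectextend} of Theorem~\ref{thm:mainextend}. By \eqref{eq:hsXdens}, there exist $\eta, c > 0$ such that
\[
\prob\big(X_{\partial_{\Zd_+}Rect(n)}(Rect(n),0) \ge \eta n^2\big) \ge c n^{-3}.
\]
I would then apply \eqref{eq:rectextend} for each $M \ge \eta n^2$, noting that its hypothesis $M \ge n^2/2$ can be relaxed to $M \ge c n^2$ for any fixed $c>0$ at the price of adjusting constants, by inspection of the proof in Section~\ref{sec:proveext}. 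This gives that, conditional on $X_{\partial_{\Zd_+}Rect(n)}(Rect(n),0) = M$, with probability $\ge c_*$ we have $\#\mathsf{A}^{out}_0(4n,4n,8n) \ge c_* M n^2 \ge c n^4$. Summing over $M \in [\eta n^2, \eta^{-1} n^2]$ then yields $\prob(\#\mathsf{A}^{out}_0(4n,4n,8n) \ge c n^4) \ge c n^{-3}$.

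The hard part will be converting the bound on $\mathsf{A}^{out}_0(4n,4n,8n) = C_{\Anns(2n,16n)}(0) \cap Ann_H(4n,12n)$ into one on $\#C_H(0)$: the connections realizing $\mathsf{A}^{out}_0$ lie in $\Anns(2n,16n)$, which may visit $\{x(1)<0\}\setminus B_-(2n)$, so $\mathsf{A}^{out}_0 \not\subseteq C_H(0)$ in general. I would resolve this by establishing a half-space variant of \eqref{eq:rectextend} in which the region $\Anns(2n,16n)$ is replaced by $B_H(16n) \subseteq \Zd_+$. The proof would follow the same template as Section~\ref{sec:proveext}, with the key modification that the long-open-connection estimates used in the analogue of Claim~\ref{clam:twoE} are supplied by the rectangular restricted two-point function bound (Theorem~\ref{thm:newboxtwopt}) in place of Theorem~\ref{thm:boxcon}. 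Since the relevant rectangles stay inside $\Zd_+$, this produces genuine half-space connections, so the extension vertices lie in $C_H(0) \cap B_H(16n)$, and hence $\#C_H(0) \ge c n^4$ on the event of probability $\ge c n^{-3}$.
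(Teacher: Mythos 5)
Your upper bound is correct and in fact marginally simpler than the paper's: you decompose on $\{0 \sa{\Zd_+} S'(n)\}$ exactly as the paper does, but you then close via the first-moment estimate $\E[\#(C_H(0)\cap B_H(n))] \le Cn$ (summed from Theorem \ref{thm:hstwopt}) and Markov, whereas the paper computes a second moment of order $n^5$ and applies Chebyshev. Both give $Ct^{-3/4}$ at $n = t^{1/4}$; your first-moment version is a modest streamlining.

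Your lower bound, however, goes down a more circuitous path than the paper's and has a genuine gap at the step you flagged as ``the hard part.'' The paper's argument is short: condition on $H_n = \{0 \sa{\Zd_+} S'(n)\}$, compute $\E[\#(C_H(0)\cap Ann_H(n,2n)) \mid H_n] \gtrsim n^4$ from \eqref{eq:twopthslower} and $\pi_H(n) \asymp n^{-3}$, compute $\E[\#(C_H(0)\cap B_H(2n))^2\mid H_n] \lesssim n^8$ from \eqref{eq:twopthsupper} and a tree-graph bound, then apply Paley--Zygmund and set $n \approx t^{1/4}$. This uses nothing beyond the already-proven part (b) of Theorem \ref{Critical Exponents}. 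Your route instead re-invokes the extensibility machinery of Theorem \ref{thm:mainextend}, which is more machinery than the theorem needs.

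The specific gap: your proposed fix --- replacing $\Anns(2n,16n)$ by $B_H(16n)$ in \eqref{eq:rectextend}, and replacing Theorem \ref{thm:boxcon} by Theorem \ref{thm:newboxtwopt} in the analogue of Claim \ref{clam:twoE} --- is not the drop-in modification you describe, because the premise ``the relevant rectangles stay inside $\Zd_+$'' fails in general. In the extensibility argument one needs $\prob(\tz \sa{B_H(16n)} \vy) \gtrsim \|\tz-\vy\|^{2-d}$, and then sums over $\vy$ to get the factor $L^2$. Theorem \ref{thm:newboxtwopt} gives such a lower bound only when one can fit a rectangle $R$ with $\tz$, $\vy \in R$ and $MR \subseteq \Zd_+$, with fixed aspect ratios; this forces $\tz$ and $\vy$ to be a macroscopic $\ell^\infty$-distance from $S(0)$. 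But $\vz$ ranges over all of $\partial_{\Zd_+}Rect(n)$, and $\vy$ over all of $Ann_H(4n,12n)$, both of which contain vertices on or near $S(0)$. For $\tz$ near $S(0)$ the correct order of $\tau_{B_H}(\tz,\vy)$ is $\|\tz-\vy\|^{1-d}$ (Theorem \ref{thm:hstwopt}), not $\|\tz-\vy\|^{2-d}$, so the summed extension count per such boundary vertex is $O(L)$ rather than $\Theta(L^2)$. A repair is possible --- restrict the count to $\vz \in Q_2$ (which is precisely what \eqref{eq:hsXdens} controls) and to targets $\vy$ with $y(1) \ge \varepsilon L$, as the paper does in its proof of \eqref{eq:twopthslower} --- but this requires rewriting the extensibility lemma rather than simply swapping regions. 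Given that the paper has already proved Theorem \ref{thm:hstwopt} by this point, the direct conditional-moment argument avoids all of this.
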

\begin{proof}
	We begin by proving the first inequality. We will compute cluster size moments conditional on $H_n := \{0 \sa{\Zd_+} S'(n)\}$. Abbreviate $Y_n = \#\left[C_{H}(0) \cap Ann_H(n,2n) \right]$. We can lower bound the (conditional) first moment of $Y_n$ by considering only those $\vx$ having $x(1) \geq n$:
	\begin{align}\label{eq:hscmoment}
	\E [Y_n \mid H_n] \geq c n^{3} \sum_{\vx \in Ann_H(n, 2n)} \tau_H(0, \vx)  \geq c n^3 \sum_{\substack{\vx \in Ann_H(n, 2n):\\ x(1) \geq n }} \tau_H(0,\vx) \geq \sum_{\substack{\vx \in Ann_H(n, 2n):\\ x(1) \geq n }} c n^{4-d} \geq c n^4\ ,
	\end{align}
	where we have used \eqref{eq:twopthslower} and the asymptotics of $\pi_H$.
	
	We can upper-bound $Y_n^2$ by $(\#[C_{H}(0) \cap B_H(2n)])^2$. Writing the latter quantity as a sum and using \eqref{eq:twopthsupper} gives
	\begin{align}
	\nonumber
	\E((\#[C_{H}(0) \cap B_H(2n)])^2\mid H_n) &\leq C n^3 \sum_{\vx, \vy \in B_H(2n)} \prob(0 \sa{\Zd_+} \vx,\, 0 \sa{\Zd_+} 
	\vy)\\
	&\leq C n^3 \sum_{\substack{\vx, \vy \in B_H(2n) \\ \vz \in \Zd_+}} \prob\left(\left\{0 \sa{\Zd_+} \vz\right\} \circ \left\{\vz \lra  \vx \right\} \circ \left\{\vz \lra \vy\right\}\right)\nonumber\\
	&\leq C n^3 \sum_{\substack{\vx, \vy \in B_H(2n) \\ \vz \in \Zd_+}} \|\vz\|^{1-d} \|\vz-\vx\|^{2-d} \|\vz-\vy\|^{2-d} \leq C n^{8}\ .\label{eq:hs2nde}
	\end{align} 
	Using the Paley-Zymund-inequality with \eqref{eq:hscmoment} \eqref{eq:hs2nde}, we find that there is a constant $c > 0$ such that, uniformly in $n$,
	\[\prob(Y_n > c n^4 \mid H_n) \geq c\ . \]
	Using the fact that $\prob(H_n) = \pi_H(n) \geq c n^{-3}$ gives $\prob(Y_n \geq c n^4) \geq c n^{-3}$. Since $\# C_{H}(0) \geq Y_n$, setting $n = C t^{1/4}$ for $C$ sufficiently large completes the proof of the first inequality of \eqref{eq:hssizeasymp}.
	
	To prove the second inequality of \eqref{eq:hssizeasymp}, first note that a calculation similar to that in \eqref{eq:hs2nde} shows $\E((\#[C_{H}(0) \cap B_H(2n)])^2) \leq C n^{5}\ . $
	Using this fact and Chebyshev's inequality, we see that for each $m > 0$,
	\begin{align}
	\nonumber
	\prob(\# C_{H}(0) > t) &\leq \pi_H(m) + \prob(\# [C_{H}(0) \cap B_H(m)] > t )\\
	\nonumber &\leq C m^{-3} + C m^5 / t^2\ .
	\end{align}
	 Setting $m = t^{1/4}$ completes the proof. 	
\end{proof}

\section*{Acknowledgements} The authors thank L.-P. Arguin for helpful discussions.

\bigskip
\addtocontents{toc}{\protect\setcounter{tocdepth}{1}}
\bibliographystyle{amsplain}
\bibliography{PercolationBib}

\addtocontents{toc}{\protect\setcounter{tocdepth}{0}}


\end{document}